\tikzset{->-/.style={decoration={markings,mark=at position .5 with {\arrow{>}}},postaction={decorate}}}
\tikzset{-<-/.style={decoration={markings,mark=at position .5 with {\arrow{<}}},postaction={decorate}}}
\tikzstyle{box}=[fill=white, draw=black, shape=rectangle, inner sep=14pt]
\tikzstyle{forward arrow}=[->-]
\tikzstyle{backward arrow}=[-<-]
\tikzstyle{box}=[fill=white, draw=black, shape=rectangle]
\tikzstyle{edge}=[-, draw=black]
\tikzstyle{arrow}=[->, draw=black]
\definecolor{myurlcolor}{rgb}{0,0,0.4}
\definecolor{mycitecolor}{rgb}{0,0.5,0}
\definecolor{myrefcolor}{rgb}{0.5,0,0}
\newcommand{\beq}{\begin{equation}}
\newcommand{\eeq}{\end{equation}}
\newcommand{\N}{\mathbb{N}}
\newcommand{\Z}{\mathbb{Z}}
\newcommand{\Q}{\mathbb{Q}}
\newcommand{\R}{\mathbb{R}}
\newcommand{\C}{\mathbb{C}}
\newcommand{\D}{\mathcal{D}}
\newcommand{\E}{\mathcal{E}}
\newcommand{\Der}{\mathrm{Der}}
\newcommand{\T}{\mathcal{T}}
\renewcommand{\L}{\mathcal{L}}
\newcommand{\op}{\mathrm{op}}
\newcommand{\id}{\mathsf{id}}		
\newcommand{\ZLA}{\mathcal{A}}		
\newcommand{\ZLB}{\mathcal{B}}		
\newcommand{\ZLC}{\mathcal{C}}		
\newcommand{\ZLL}{\mathcal{L}}		
\newcommand{\Ric}{\mathsf{Ric}}		
\newcommand{\Mod}{\mathcal{M}}		
\newcommand{\Nod}{\mathcal{N}}		
\newcommand{\Modules}[1]{\mathrm{Hom}_{#1}}
\newcommand{\Man}{\mathsf{Man}}
\newcommand{\Afd}[1]{\mathsf{Afd}_{#1}}
\newcommand{\Alg}[1]{\mathsf{cAlg}_{#1}}
\newcommand{\Modcat}[1]{\mathsf{Mod}_{#1}}
\newcommand{\fgpModcat}[1]{\mathsf{fgpMod}_{#1}}
\newcommand{\Vect}[1]{\mathsf{Vect}_{#1}}
\newcommand{\eps}{\varepsilon}
\theoremstyle{plain}
\newtheorem{dummy}{Dummy}[section]
\newtheorem{thm}[dummy]{Theorem}\Crefname{thm}{Theorem}{Theorems}
\newtheorem{lem}[dummy]{Lemma}\Crefname{lem}{Lemma}{Lemmas}
\newtheorem{prop}[dummy]{Proposition}\Crefname{prop}{Proposition}{Propositions}
\Crefname{cor}{Corollary}{Corollaries}
\newtheorem{ass}[dummy]{Assumption}\Crefname{ass}{Assumption}{Assumptions}
\newtheorem{qstn}[dummy]{Question}\Crefname{qstn}{Question}{Questions}
\newtheorem{defn}[dummy]{Definition}\Crefname{defn}{Definition}{Definitions}
\newtheorem{nota}[dummy]{Notation}\Crefname{nota}{Notation}{Notations}
\newtheorem{prob}[dummy]{Problem}\Crefname{prob}{Problem}{Problems}
\newtheorem{slog}[dummy]{Slogan}\Crefname{slog}{Slogan}{Slogans}
\theoremstyle{remark}
\newtheorem{ex}[dummy]{Example}\Crefname{ex}{Example}{Examples}
\newtheorem{conj}[dummy]{Conjecture}\Crefname{conj}{Conjecture}{Conjectures}
\newtheorem{rem}[dummy]{Remark}\Crefname{rem}{Remark}{Remarks}
\Crefname{note}{Note}{Notes}
\numberwithin{equation}{section}
\Crefname{equation}{}{}		
\Crefname{scalar_ex}{Scalar Field}{Scalar Field}
\Crefname{electro_ex}{Electrodynamic Field}{Electrodynamic Field}
\Crefname{spinor_electro_ex}{Spinor Field}{Spinor Field}
\Crefname{spinor_electro_ex}{Spinor Electrodynamics}{Spinor Electrodynamics}
\Crefname{gr_ex}{Metric Field}{Metric Field}
\Crefname{scalar_fun_ex}{Scalar Field (Kinematic)}{Scalar Field (Kinematic)}
\let\originalleft\left
\let\originalright\right
\renewcommand{\left}{\mathopen{}\mathclose\bgroup\originalleft}
\renewcommand{\right}{\aftergroup\egroup\originalright}
\setlist[enumerate]{label=(\roman*),itemsep=5pt,topsep=8pt}
\setlist[itemize]{label=$\triangleright$,itemsep=5pt,topsep=6pt}
\newcommand{\newterm}[1]{\textbf{#1}}
\begin{document}



\title{Differential geometry and general relativity\\[2pt] with algebraifolds}

\author{Tobias Fritz}

\address{Department of Mathematics, University of Innsbruck, Austria}
\email{tobias.fritz@uibk.ac.at}

\keywords{}

\subjclass[2020]{Primary: 53C12; Secondary: 13N15, 58C25}

\thanks{\textit{Acknowledgements.} We thank Lu Chen, Igor Khavkine, Hông Vân Lê, Eugene Lerman and Michael Oberguggenberger for fruitful discussions, pointers to the literature and/or comments on a draft.}

\begin{abstract}
	It is often noted that many of the basic concepts of differential geometry, such as the definition of connection, are purely algebraic in nature.
	Here, we review and extend existing work on fully algebraic formulations of differential geometry which eliminate the need for an underlying manifold.
	While the literature contains various independent approaches to this, we focus on one particular approach that we argue to be the most natural one based on the definition of \emph{algebraifold}, by which we mean a commutative algebra $\ZLA$ for which the module of derivations of $\ZLA$ is finitely generated projective.
	Over $\R$ as the base ring, this class of algebras includes the algebra $C^\infty(M)$ of smooth functions on a manifold $M$, and similarly for analytic functions.
	An importantly different example is the Colombeau algebra of generalized functions on $M$, which makes distributional differential geometry an instance of our formalism.
	Another instance is a fibred version of smooth differential geometry, since any smooth submersion $M \to N$ makes $C^\infty(M)$ into an algebraifold with $C^\infty(N)$ as the base ring.
	Over any field $k$ of characteristic zero, examples include the algebra of regular functions on a smooth affine variety as well as any function field.

	Our development of differential geometry in terms of algebraifolds comprises tensors, connections, curvature, geodesics and we briefly consider general relativity.
\end{abstract}

\newgeometry{top=2cm}
\maketitle
\tableofcontents

\newpage
\restoregeometry

\section{Introduction}
\label{introduction}

It is a standard observation that differential geometry can largely be formulated in algebraic terms. This manifests itself for example in the definition of a vector field as a derivation on the algebra of smooth functions $C^\infty(M)$; and more generally in the characterization of a tensor as a map enjoying $C^\infty(M)$-linearity in each argument; but also in the definition of a connection as an operation of vector fields satisfying suitable algebraic conditions. It is therefore not surprising that pseudo-Riemannian geometry, up to the consideration of metric tensors and the Einstein field equation of general relativity, can be formulated entirely in terms of constructions making reference only to the algebra $C^\infty(M)$ without making reference to the points of the manifold $M$.
It is perhaps also not surprising that these algebraic constructions only rely on certain properties of $C^\infty(M)$ shared by many other algebras that are not of this form, to which the same constructions can be applied. Such an algebraic treatment can thus provide not just a reformulation of basic differential geometry without reference to points, but at the same time will amount to a considerable generalization. In the context of general relativity, such a version of algebraic differential geometry has first been proposed by Geroch~\cite{geroch}.
There are a number of further developments and apparent independent rediscoveries of this idea~\cite{heller_differential,heller_einstein,heller_sheaves,malliosI,malliosII,mallios,MR_einstein,schmidt,BM,Pessers,PV}, which we turn to in \Cref{related_work}.

In this paper, we develop one such approach that seems particularly natural to us. 
The main observation behind the definitions in \Cref{sec:algebraifolds} is that if one starts with any commutative ring $k$ and any commutative $k$-algebra $\ZLA$ satisfying a suitable regularity condition, namely that its module of derivations is \emph{finitely generated projective}, then the standard algebraic definitions of tensors, connections and curvature still make perfect sense and can be put to use.
We call such algebras \newterm{algebraifolds} (\Cref{algebraifold}).
Among the basic examples of algebraifolds are of course the $\R$-algebras of smooth functions $C^\infty(M)$ for any smooth manifold $M$, and this construction results in an embedding of the category of smooth manifolds into the category of algebraifolds~\eqref{man_to_afd}.
The reason for why the algebraic versions of the standard definitions still apply to algebraifolds is that a finitely generated projective module is a \emph{dualizable object} in the category of modules; this is known as the \emph{dual basis lemma}, concepts that we recall in \Cref{fgp}.
Dual bases for algebraifolds (\Cref{dual_bases_algebraifold}) will turn out to plays a role similar to the role of charts for analysis on manifolds.

In \Cref{algebraifold_examples}, we focus on other examples of algebraifolds.
A simple example that makes sense over any commutative ring $k$ is the polynomial ring $k[x_1,\ldots,x_n]$, and we thus obtain a generalization of standard differential geometry that frees it from its confinement to the manifold context.
Other examples include algebras of analytic functions, all finitely generated separable field extensions (which illustrate that differential geometry can even be done with \emph{fields} of functions).
In addition, the (special) Colombeau algebra of generalized functions on a smooth manifold is an algebraifold, and this makes the corresponding version of distributional differential geometry an instance of our formalism.
As another interesting example, let us mention that for any smooth submersion $M \to N$, the ring $C^\infty(M)$ considered as an algebra over $k = C^\infty(N)$, in which case our formalism instantiates to a \emph{fibred} version of differential geometry which automatically keeps track of how all structures vary smoothly over $N$.

\Cref{sec:diff_geom} presents the notions of tensors, connections and curvature for algebraifolds.
Since these notions are already largely of algebraic nature in the manifold formalism, these developments are rather straightforward.
The only additional assumption that we need to make, in order for the Levi--Civita connection to exist, is that $2 \in k$ should be invertible (\Cref{two_inv}).
It may also be worth mentioning that our notion of dimension for algebraifolds (\Cref{dimension}) generalizes the dimension of a connected manifold, but is not an integer in general.

In \Cref{cats}, we consider morphisms of algebraifolds, which are algebra homomorphisms subject to an additional regularity condition (\Cref{algebraifold_homomorphism}), which may or may hold for every homomorphism (\Cref{existence_pullback}).
We then consider a notion of differential for such a map, which in the manifold case recovers the usual action of a smooth map on tangent vectors (\Cref{tangent_action}).
Subsequently, we consider the category of algebraifolds over any base $k$, with an algebraifold map defined as the formal opposite of an algebraifold homomorphism.
The \emph{problem of products} asks whether this category of algebraifolds has products, and whether these products specialize to products of smooth manifolds in the case $k = \R$ (\Cref{prods}).
A positive answer would be an important indication of the naturality of the algebraifold formalism.

We return to differential geometry in \Cref{geodesics} with a treatment of geodesics, which seems an important step in light of the fact that the standard manifold definition is strongly point-based.
This definition relies on the notion of differential developed in \Cref{cats} as well as on a notion of \emph{formal line}.
The idea is that algebraifold maps from a formal line into another algebraifold generalize the notion of smooth curve as a smooth map from $\R$ into a smooth manifold.

Finally, we sketch potential applications of algebraifolds to general relativity in \Cref{GR}.
The first one is a cosmological solution to the Einstein field equations given by a rational function field that works over any ground field $k$.
The appeal of this solution is that it is a field, and this realizes the non-rigorous physicist's dream of being able to invert any nonzero function on spacetime without worrying about its invertibility.
We also consider the dependence of a (smooth) cosmological spacetime on parameters, which is naturally described in our formalism as an algebraifold over the ring of smooth functions of the parameters.

\subsection{Conventions}

All our rings and algebras are assumed to have a unit $1$ and are commutative.
Except where explicitly noted otherwise, $k$ is any commutative ring playing the role of base ring.

\section{Related work}
\label{related_work}`

Algebraic approaches to differential calculus have been quite standard for a long time~\cite{sardanashvily}.
In particular they have been considered to have relevance for partial differential equations in physics by Vinogradov's school~\cite{vinogradov}.
For a somewhat different context, similar ideas play a central role in noncommutative geometry~\cite[Chapters 10 and 11]{DV}.
These pointers should suffice to indicate that the literature on algebraic approaches to differential geometry in general is much too broad for us to provide a comprehensive overview here.
We therefore limit our discussion to those works which have also provided an explicit treatment of metrics, connections and the curvature tensor.
Then we know of the following, in roughly chronological order:

\subsection{Geroch}

The first such approach that we are aware of is Geroch's~\cite{geroch}, who had considered an algebraic version (over $\R$) of metrics, tensor calculus, connections and curvature.
This allows one to consider the Einstein field equation in algebraic terms, and Geroch coined the term \newterm{Einstein algebra} for its solutions.
In addition  to serving as inspiration for some of the works mentioned next, this proposal seems to have found the most resonance in the philosophy of physics literature~\cite{earman,bain,rynasiewicz,RBW}.

\subsection{Heller}

There is a good amount of work by Heller and coauthors which builds on Geroch's idea and extends it in various directions, in particular extending it to \emph{sheaves} of Einstein algebras~\cite{heller_differential,heller_einstein,heller_sheaves,HS}.
This is motivated by the desire to incorporate spacetime singularities as actual points, which is achieved by imposing the differential-geometric structure and the Einstein field equation only on suitable open subsets which do not contain any singular points.
Heller's work places particular emphasis on Gelfand-type representations, meaning that considering abstract algebras as concrete algebras of functions.

\subsection{Mallios}

The \emph{abstract differential geometry} of Mallios and coauthors~\cite{MR,malliosI,malliosII,mallios,MR_einstein} also uses sheaves of algebras $\ZLA$, but now equipped with a sheaf of modules $\Omega^1$ playing the role of $1$-forms and a derivation $d$ taking values in $\Omega^1$.
Mallios calls these \emph{differential triads}.
An additional structure often considered is a sheaf morphism $d^1 : \Omega^1 \longrightarrow \Omega^1 \wedge_\ZLA \Omega^1$, out of which a de Rham complex and de Rham cohomology can be built~\cite{MR}.
Notions of metric, connection and curvature are developed in~\cite{malliosII}.

\subsection{Schmidt} 

The thesis of Schmidt~\cite{schmidt} has formulated pseudo-Riemannian geometry and general relativity in scheme-theoretic terms based on Kähler differentials, up to the construction of various scheme-theoretic solutions to the Einstein field equations.
This approach does not specialize to manifold differential geometry, but it is argued that cosmological solutions to the Einstein field equation are algebraic spaces (in the algebraic geometry sense), which makes this formulation of differential geometry physically adequate.

However, Schmidt's emphasis is on \emph{arithmetic} geometry, and in particular on spacetimes defined over the integers or the ring of \emph{adeles}.
The latter means that all possible completions of the rational numbers are treated on an equal footing, these completions being the real numbers on the one hand and the $p$-adic numbers for all primes $p \in \N$ on the other.
Working over the ring of adeles in this way is in the spirit of Manin's hypothesized \emph{arithmetical physics}~\cite{manin}, who had already advocated the idea that all completions of the rational numbers should be treated democratically. 

\subsection{Beggs and Majid}

The \emph{quantum Riemannian geometry} of Beggs and Majid~\cite{BM}\footnote{This research program features a substantial number of papers now, we thus refer to their textbook~\cite{BM} for further references.}
	develops a differential calculus for a noncommutative version of differential geometry, where the basic structure considered is an algebra of functions $\ZLA$ together with a bimodule of $1$-forms $\Omega^1$ equipped with a differential $d : \ZLA \to \Omega^1$ together with additional technical conditions.
This is quite similar to Mallios's approach, but differs insofar as the left and right actions of $\ZLA$ on $\Omega^1$ are not necessarily equal even when $\ZLA$ is commutative, and Beggs and Majid present examples where this seems natural~\cite[Example~1.6]{BM}.
They also note that vector fields, and derived notions such as the Levi--Civita connection, can be meaningfully considered as soon as $\Omega^1$ is finitely generated projective.
Their most recent developments include a theory of quantum geodesic flows and their relation to quantum mechanics~\cite{BMflows,BM0923,BM1223}

The focus of Beggs and Majid is clearly in a noncommutative context.
This is motivated by quantum gravity, and they correspondingly investigate noncommutative examples of their formalism as models of quantum spacetime.

\subsection{Pessers and van der Veken}

Apparently also unaware of the earlier works, the thesis of Pessers~\cite{Pessers} and the subsequent paper by Pessers and van der Veken~\cite{PV} consider an approach based on Lie--Rinehart algebras, which are given by an algebra $\ZLA$ together with a module $\D$ which acts on $\ZLA$ by derivations~\cite{huebschmann}.
They call such a structures \emph{Rinehart space} provided that a few technical conditions are satisfies, 
Roughly speaking, these Rinehart spaces correspond to Mallios's differential triads modulo the dualization between $1$-forms (as in differential triads) and vector fields (as in Rinehart spaces).

Pessers and van der Veken subsequently consider metrics, connections, the Levi--Civita connection and curvature, before moving on to a developments on quotients of Rinehart spaces, which constitute an algebraic formulation and generalization of submanifolds, and Rinehart spaces of constant sectional curvature.

\subsection{Novelty in our approach}

The particular approach that we put forward in this paper is distinct from all the ones above, but still similar in flavour to the extent that the basic structures of differential geometry (\Cref{sec:diff_geom}) are treated in essentially the same way, 
As such, our first goal is to serve as a review of algebraic differential geometry in a setting that seems particularly simple.
In addition to this, the following original points may be worth highlighting separately:

\begin{itemize}
	\item While the importance of finitely generated projective modules for the duality between vector fields and $1$-forms has also been recognized e.g.~by Beggs and Majid, our definition of algebraifold (\Cref{algebraifold}) seems to be new.
		The universal property of \Cref{kahler_algebraifold} indicates why this definition is natural, and \Cref{algebraifold_vs_LR} explains why we prefer it over a setting like the one used by Beggs and Majid, 
	\item Some of our examples of algebraifolds, most notably separable finitely generated field extensions and fibred differential geometry via submersions (\Cref{field_ext,parametric}), have apparently not been considered before,
	\item \Cref{dual_bases_algebraifold} on dual bases for vector fields and $1$-forms gives a useful computational tool which to some extent acts as a substitute for charts.
	\item Our consideration of differentials of algebraifold maps in \Cref{cats}, which crucially informs our new algebraic definition of formal lines and geodesics in \Cref{geodesics}, seems to be novel.
\end{itemize}

\section{Algebraifolds}
\label{sec:algebraifolds}

In order to help us find out what a good setting for an algebraic generalization of differential geometry will be, let us indicate which aspects of manifold-based differential geometry we would like to consider in the first place.
These are the following:

\begin{enumerate}[label=(\alph*)]
	\item\label{vector_item} Vector fields, $1$-forms, and tensors,
	\item Lie derivatives,
	\item Affine connections on vector bundles,
	\item Metrics, the Levi--Civita connection and the usual curvature tensors,
	\item Geodesics.
\end{enumerate}

Of course, many other structures exist and are important in suitable flavours of differential geometry and physics, such as principal bundles, jet bundles or spinor bundles.
We will not consider those in this paper.
The above stuff display the following relevant structures and properties, which we will make crucial use of in the algebraic generalization:

\begin{enumerate}[resume,label=(\alph*)]
	\item\label{duality_item} Duality between vector fields and $1$-forms.
		In particular, this means e.g.~that a $(1,1)$-tensor can be defined as a linear map from vector fields to vector fields, or likewise from $1$-forms to $1$-forms, or as a bilinear map from vector fields and $1$-forms to smooth functions. 
	\item Every smooth function $f$ has a $1$-form derivative $df$.
	\item Vector fields form a Lie algebra with the Lie derivative as the Lie bracket.
	\item\label{LC_item} The Levi--Civita connection is the only torsion-free connection compatible with the metric.
	\item\label{pullback_item} $1$-forms can be pulled back along smooth maps.
	\item Vector bundles and connections on them can be pulled back along smooth maps.
	\item\label{geodesic_item} A geodesic is a smooth curve whose tangent vector is covariant constant along the curve, in the sense that pulling back the connection as well as the tangent vectors along itself results in a covariant constant section of a vector bundle on $\R$.
\end{enumerate}

We now begin the technical development by axiomatizing the duality between vector fields and $1$-forms in algebraic terms.
Postulating this duality will turn out to be sufficient for developing algebraic generalizations of all the other things listed above.

\subsection*{Derivations}

For a manifold $M$, it is well-known that the vector fields on $M$ are in canonical bijection with the set of $\R$-linear derivations on $C^\infty(M)$~\cite[Theorem~2.72]{lee}.
Indeed every vector field $v$ defines a derivation obtained by sending every function $f$ to its directional derivative along $v$, and the surprising fact is that every derivation is of this form for a unique $v$, postulating only $\R$-linearity and the Leibniz rule, but not requiring continuity of any kind.

It is thus natural to identify vector fields with derivations, and to use this as the definition of an algebraic generalization of vector fields as follows~\cite{sardanashvily}.

\begin{defn}
	Let $k$ be a commutative ring and $\ZLA$ a commutative $k$-algebra.
	Then a \newterm{derivation} on $\ZLA$ is a $k$-linear map $D: \ZLA \to \ZLA$ such that the \newterm{Leibniz rule}
	\begin{equation}
		\label{leibniz_rule}
		D(a_1 a_2) = D(a_1) a_2 + a_1 D(a_2) 
	\end{equation}
	holds for all $a_1, a_2 \in \ZLA$.
	We write $\D_{\ZLA}$ for this set of derivations, leaving the base ring $k$ implicit.
\end{defn}

The set $\D_{\ZLA}$ is easily seen to be an $\ZLA$-module with respect to the usual pointwise addition and scalar multiplication.
In particular, the scalar multiplication by $a \in \ZLA$ is given by
\[
	(a D)(a') \coloneqq a D(a').
\]

More generally, we can also consider derivations $D : \ZLA \to \Mod$ taking values in any $\ZLA$-module $\Mod$ by defining them as $k$-linear maps satisfying the same Leibniz rule~\eqref{leibniz_rule}, but now considered as an equation in $\Mod$.
These derivations form an $\ZLA$-module in the same way, and we denote it by $\Der_k(\ZLA,\Mod)$.
In particular,
\[
	\D_{\ZLA} = \Der_k(\ZLA, \ZLA).
\]

\subsection{Duality with $1$-forms}

Considering $\D_{\ZLA}$ as the algebraic analogue version of vector fields, we now would like to define the $\ZLA$-module of $1$-forms as its dual module,
\begin{equation}
	\label{1forms}
	\Omega_{\ZLA} := \D_{\ZLA}^* = \Modules{\ZLA}(\D_{\ZLA}, \ZLA),
\end{equation}
in order to generalize the familiar duality between vector fields and $1$-forms on a manifold.
However, the duality in the manifold case is quite a bit stronger than this: since the vector fields on $M$ are sections of a vector bundle, \Cref{SSS} shows that they form a \newterm{finitely generated projective (fgp)} module over $C^\infty(M)$, and this implies that the duality between vector fields and $1$-forms takes the rather strong form explicated by \Cref{fgp_dual}.
This strong form of the duality is precisely what allows us to consider a $(1,1)$-tensor in the various equivalent forms mentioned at~\ref{duality_item}.

So in order to generalize this duality to the algebraic setting, we simply impose it as an additional requirement on the algebra $\ZLA$, and in fact as essentially\footnote{For the Levi--Civita connection, we will also impose $2 \in k$ to be invertible (\cref{two_inv}), but this seems rather innocuous since all of our examples of interest will satisfy $\Q \subseteq k$.} the \emph{only} requirement that we will need in order to generalize all of~\ref{vector_item}--\ref{LC_item}.\footnote{The remaining items~\ref{pullback_item}--\ref{geodesic_item} then refer to maps, which we will consider later in \Cref{cats}.}

\begin{defn}
	\label{algebraifold}
	For a commutative ring $k$, a \newterm{$k$-algebraifold} is a commutative $k$-algebra $\ZLA$ such that the $\ZLA$-module $\D_{\ZLA}$ is fgp.
\end{defn}

\begin{ex}
	\label{mfd_ex}
	Of course, our motivating example of an algebraifold is the $\R$-algebra $C^\infty(M)$ for any smooth manifold $M$.
	The derivations correspond to the vector fields on $M$, which form an fgp module over $C^\infty(M)$ as part of \Cref{SSS}.\footnote{\label{mfd_dim_fn}
		Although~\Cref{SSS} assumes finitely many connected components for $M$, the direction of implication that is relevant here is easily proven generally, provided that one defines ``manifold'' such that its components have uniformly bounded sets of generating vector fields, which is necessary in order for the module of vector fields is not finitely generated.

		Although we do not feel the need to be fully precise about the definition of ``manifold''---given that manifolds are not the main protagonists of this paper---it is worth pointing out that certain definitions in the theory of manifolds need to be handled with great care in relation to the non-connected case.
	For example, already when setting up the very definition of manifold itself, one runs into the dilemma of whether the dimension of a manifold should be the same across different components.
		Different standard textbook accounts of the theory disagree on this. For example, Lang's~\cite[Chapter~II]{lang} defines manifolds while allowing varying target spaces for charts, and therefore varying dimension across components, which has the advantage of equipping the category of manifolds with coproducts.
		The more common choice, as made e.g.~by de Rham~\cite[Chapter~I]{derham} and Lee~\cite[Chapter~1]{lee}, is to fix the dimension from the start, in which case all components of a manifold must have the same dimension.}
\end{ex}

We will consider other examples of algebraifolds in \Cref{algebraifold_examples}.
For now, let us argue next that this definition is quite natural also from an algebraic geometry point of view.

\subsection{A universal property for $1$-forms}

Upon seeing \Cref{algebraifold}, an algebraic geometer might object that it looks inelegant: there is an established algebraic notion of differential $1$-form, which is usually regarded as the ``correct'' definition due to the universal property that it enjoys, and this is a priori different from our more mundane-looking~\eqref{1forms}.
Let us first recall this standard definition before addressing this objection.

\begin{defn}
	\label{kahler_forms}
	Let $\ZLA$ be a commutative algebra over a commutative ring $k$.
	Then the $\ZLA$-module of \newterm{Kähler differentials} $\widehat{\Omega}_{\ZLA}$ is the $\ZLA$-module with generators $\{da \: : \: a \in \ZLA\}$ subject to the relations
	\[
		\begin{aligned}
			d(a + b)	&= da + db	\\
			d(ra)		&= r \, da	\\
			d(ab)		&= (da) b + a \, db
		\end{aligned}
	\]
	for all $a, b \in \ZLA$ and $r \in k$.
\end{defn}

The universal property of $\widehat{\Omega}_{\ZLA}$ is that it is the initial $\ZLA$-module equipped with a derivation $d : \ZLA \to \widehat{\Omega}_{\ZLA}$.
In other words, $\widehat{\Omega}_{\ZLA}$ classifies derivations with values in any other module $\Mod$, in the sense that there is a natural bijection
\[
	\begin{tikzcd}[row sep=4pt,column sep=6pt]
		\Der_\R(\ZLA, \Mod) \; \ar[draw=none,"\displaystyle{\cong}" description]{r}	& \; \Modules{\ZLA}(\widehat{\Omega}_{\ZLA}, \Mod)	\\
		\qquad h \circ d & h. \qquad \ar[mapsto]{l}.
	\end{tikzcd}
\]
This holds essentially by definition of $\widehat{\Omega}_{\ZLA}$, where the map from right to left is given by composing a $\ZLA$-module homomorphism by the universal derivation $d$.

In contrast to this, our definition~\eqref{1forms} does not enjoy any obvious universal property.
Indeed for any manifold $M$ with at least one component of positive dimension, the $C^\infty(M)$-module of Kähler differentials $\widehat{\Omega}_{C^\infty(M)}$ over $k = \R$ does \emph{not} coincide with the module of differential $1$-forms, essentially\footnote{We refer to~\cite[Theorem~16]{gomez} for a formal proof; see also~\cite{osborn} for general results on relations between Kähler differentials.} since the equation
\[
	d(e^x) = e^x \, dx	
\]
holds as an equation in $\Omega_{C^\infty(M)}$ but not in $\widehat{\Omega}_{C^\infty(\R)}$, which is intuitive since the relations between Kähler differentials that have been imposed in \Cref{kahler_forms} let us evaluate $d$ on polynomial expressions but not on infinite series.
Therefore the standard Kähler differentials are \emph{not} an algebraic generalization of differential $1$-forms in the sense that we would like to have, although this is achieved by our mundane~\eqref{1forms}.

However, it turns out that there is a surprisingly simple way to get the best of both worlds.
This consists of modifying the universal property of Kähler differentials in such a way that it \emph{does} apply to the module of $1$-forms in the manifold case, and more generally to our $\Omega_{\ZLA}$ for algebraifolds.
In the manifold case, this is the following known result.

\begin{thm}[{\cite[Theorem~11.43]{nestruev}}]
	\label{manifold_univ_der}
	Let $M$ be a manifold and $\Omega_{C^\infty(M)}$ the $C^\infty(M)$-module of differential $1$-forms on $M$.
	Then the map
	\begin{align}
	\begin{split}
		\label{univ_dev_mfd}
		C^\infty(M)	& \longrightarrow \Omega_{C^\infty(M)}	\\
		f		& \longmapsto df
	\end{split}
	\end{align}
	is an $\R$-derivation, and it is the universal $\R$-derivation with values in fgp modules: for every fgp $C^\infty(M)$-module $\Mod$, we have a bijection
	\[
		\begin{tikzcd}[row sep=4pt,column sep=6pt]
			\Der_\R(C^\infty(M), \Mod) \; \ar[draw=none,"\displaystyle{\cong}" description]{r}	& \; \Modules{C^\infty(M)}(\Omega_{C^\infty(M)}, \Mod)	\\
			\qquad h \circ d & h. \qquad \ar[mapsto]{l}
		\end{tikzcd}
	\]
\end{thm}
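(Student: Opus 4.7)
That $f \mapsto df$ is an $\R$-derivation is the standard fact that the exterior derivative on $C^\infty(M)$ is $\R$-linear and obeys the Leibniz rule; it follows directly from the defining pointwise identity $(df)(v) = v(f)$. Consequently, postcomposition with $d$ yields a well-defined $\R$-linear map
\[
	\Phi : \Modules{C^\infty(M)}(\Omega_{C^\infty(M)}, \Mod) \longrightarrow \Der_\R(C^\infty(M), \Mod), \qquad h \longmapsto h \circ d.
\]
The entire content of the theorem is that $\Phi$ is a bijection when $\Mod$ is fgp, and I would establish this by exhibiting an explicit inverse $\Psi$ built from the dual basis lemma for $\Mod$.

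For the construction of $\Psi$, I would invoke two ingredients already secured in this section: $\D_{C^\infty(M)}$ is fgp (\Cref{SSS}) and every $\R$-derivation on $C^\infty(M)$ is a vector field. Applying the dual basis lemma to the fgp module $\Mod$ yields finite families $m_\alpha \in \Mod$ and $\phi^\alpha \in \Modules{C^\infty(M)}(\Mod, C^\infty(M))$ with $m = \sum_\alpha \phi^\alpha(m)\, m_\alpha$ for every $m \in \Mod$. Given $D \in \Der_\R(C^\infty(M), \Mod)$, each composite $v^\alpha \coloneqq \phi^\alpha \circ D$ is then an $\R$-derivation on $C^\infty(M)$, hence a vector field $v^\alpha \in \D_{C^\infty(M)}$, and I would define
\[
	\Psi(D)(\omega) \coloneqq \sum_\alpha \omega(v^\alpha)\, m_\alpha,
\]
which is manifestly $C^\infty(M)$-linear in $\omega$ and therefore lies in $\Modules{C^\infty(M)}(\Omega_{C^\infty(M)}, \Mod)$.

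It then remains to check that $\Phi$ and $\Psi$ are mutually inverse. The direction $\Phi \circ \Psi = \id$ is a direct unpacking on exact forms using $(df)(v) = v(f)$ together with the dual basis expansion in $\Mod$: $\Psi(D)(df) = \sum_\alpha v^\alpha(f)\, m_\alpha = \sum_\alpha \phi^\alpha(Df)\, m_\alpha = Df$. The direction $\Psi \circ \Phi = \id$ is the step I expect to be the main obstacle; here I would fix $h$, set $D \coloneqq h \circ d$, and apply the dual basis expansion $h(\omega) = \sum_\alpha \phi^\alpha(h(\omega))\, m_\alpha$ to reduce the task to proving the identity $\phi^\alpha(h(\omega)) = \omega(v^\alpha)$ for every $\omega$. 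Both sides are $C^\infty(M)$-linear functionals on $\Omega_{C^\infty(M)}$, and the reflexivity isomorphism $\Omega_{C^\infty(M)}^* \cong \D_{C^\infty(M)}$, valid since $\D_{C^\infty(M)}$ is fgp, identifies each with a derivation of $C^\infty(M)$; evaluating on exact forms $df$ shows that both functionals correspond to the same derivation $v^\alpha$, hence agree on all of $\Omega_{C^\infty(M)}$. The only real subtlety is juggling the two incarnations of $\D_{C^\infty(M)}$---as the module of derivations of $C^\infty(M)$ and as the dual of $\Omega_{C^\infty(M)}$---through the relation $(df)(v) = v(f)$; once this identification is kept straight, the proof uses no manifold-specific input beyond the fgp property of $\D_{C^\infty(M)}$, and thus the same argument would establish the universal property for arbitrary $\R$-algebraifolds.
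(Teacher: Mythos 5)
Your proof is correct, but it takes a different route from the paper, which does not prove this statement at all: it imports it from Nestruev's Theorem~11.43 (the universal property for the larger class of \emph{geometric} modules) and merely observes that every fgp module is geometric by \Cref{SSS}. Your argument is instead self-contained: you build the inverse $\Psi$ explicitly from a dual basis $(m_\alpha, \phi^\alpha)$ of the fgp module $\Mod$, note that each $\phi^\alpha \circ D$ is a derivation and hence a vector field, and then settle $\Psi \circ \Phi = \id$ by passing through the reflexivity isomorphism $\Omega_{C^\infty(M)}^* \cong \D_{C^\infty(M)}$ and comparing on exact forms. All the steps check out, including the one you flag as delicate: since $w^\alpha(f) = (df)(w^\alpha) = \phi^\alpha(h(df)) = v^\alpha(f)$ for all $f$ forces $w^\alpha = v^\alpha$ as derivations, the two functionals $\omega \mapsto \phi^\alpha(h(\omega))$ and $\omega \mapsto \omega(v^\alpha)$ agree on all of $\Omega_{C^\infty(M)}$, and you only ever need surjectivity of the canonical map $\D \to \D^{**}$ — notably, you do not need the separate fact that the exact forms $df$ generate $\Omega_{C^\infty(M)}$ as a module, which is a small structural advantage. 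What you have written is essentially a concrete, dual-basis unwinding of the paper's own proof of the more general \Cref{kahler_algebraifold} (which establishes the isomorphism for $\Mod = \ZLA$ via the double dual and then extends by naturality to free modules and their direct summands), and your closing observation that the argument uses nothing manifold-specific beyond $\D_{C^\infty(M)}$ being fgp is exactly the content of that theorem. What the paper's citation buys is the stronger statement for all geometric modules; what your argument buys is independence from Nestruev and a template that transfers verbatim to arbitrary algebraifolds.
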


In fact, the cited~\cite[Theorem~11.43]{nestruev} is a bit stronger than this: it proves the universal property for a larger class of $C^\infty(M)$-modules which Nestruev calls \emph{geometric}.
We will not need this stronger form here.
To see how our formulation follows from his, it is enough to note that every fgp module satisfies this condition (essentially by \Cref{SSS}).

For algebraifolds in general, we have the following result.

\begin{thm}
	\label{kahler_algebraifold}
	For a commutative ring $k$ and a commutative $k$-algebra $\ZLA$, the following are equivalent:
	\begin{enumerate}
		\item\label{algebraifold_item}
			$\ZLA$ is a $k$-algebraifold.
		\item\label{fgp_kahler}
			There exists an fgp module $\Omega_{\ZLA}$ together with a $k$-derivation
			\[
				d \: : \: \ZLA \longrightarrow \Omega_{\ZLA}
			\]
			which is universal for derivations on fgp modules: for every fgp $\ZLA$-module $\Mod$, we have a bijection
			\beq
				\label{algebraifold_omega}
				\begin{tikzcd}[row sep=4pt,column sep=6pt]
					\Der_k(\ZLA, \Mod) \; \ar[draw=none,"\displaystyle{\cong}" description]{r}	& \; \Modules{\ZLA}(\Omega_{\ZLA}, \Mod)	\\
					\qquad h \circ d & h. \qquad \ar[mapsto]{l}
				\end{tikzcd}
			\eeq
	\end{enumerate}
	If these conditions hold, then we can take $\Omega_\ZLA = \D_{\ZLA}^*$ with universal derivation given by
	\begin{align}
	\begin{split}
		\label{univ_der_eval}
		\ZLA & \longrightarrow \D_{\ZLA}^*	\\
		a & \longmapsto \left( v \mapsto v(a) \right).
	\end{split}
	\end{align}
\end{thm}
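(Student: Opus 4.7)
The plan is to derive both implications from the fact that the fgp condition turns dualization into an involution: for an fgp $\ZLA$-module $P$ one has a canonical isomorphism $P \cong P^{**}$, and more generally dualization $f \mapsto f^*$ gives a natural bijection between $\Modules{\ZLA}$-sets of fgp modules. This is the content of the dual basis lemma that the paper recalls.

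Direction (ii) $\Rightarrow$ (i) is essentially automatic: specialize \eqref{algebraifold_omega} to $\Mod = \ZLA$, which is fgp as a free module of rank one, to obtain
\[
    \D_\ZLA \;=\; \Der_k(\ZLA, \ZLA) \;\cong\; \Modules{\ZLA}(\Omega_\ZLA, \ZLA) \;=\; \Omega_\ZLA^*.
\]
Since $\Omega_\ZLA$ is fgp, so is its dual $\D_\ZLA$.

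For (i) $\Rightarrow$ (ii) together with the explicit description, set $\Omega_\ZLA := \D_\ZLA^*$, which is fgp as the dual of an fgp module, and take $d$ as in \eqref{univ_der_eval}. That $d$ is a $k$-derivation is a pointwise check: $k$-linearity is immediate, and $d(ab)(v) = v(ab) = v(a) b + a\, v(b)$ for each $v \in \D_\ZLA$ yields the Leibniz rule in $\D_\ZLA^*$. The substantive content is the universal property, which I would establish through the chain of natural isomorphisms
\[
    \Modules{\ZLA}(\D_\ZLA^*, \Mod) \;\cong\; \Modules{\ZLA}(\Mod^*, \D_\ZLA^{**}) \;\cong\; \Modules{\ZLA}(\Mod^*, \D_\ZLA) \;\cong\; \Der_k(\ZLA, \Mod).
\]
The first iso is dualization, applicable because $\D_\ZLA^*$ and $\Mod$ are both fgp; the second is the dual basis iso $\D_\ZLA \cong \D_\ZLA^{**}$; and the third sends a derivation $D$ to the $\ZLA$-linear map $\xi \mapsto \xi \circ D$, which lands in $\D_\ZLA$ because $\xi \in \Mod^*$ is $\ZLA$-linear (so that $\xi \circ D$ inherits the Leibniz rule), with inverse sending $\phi$ to the derivation $a \mapsto \bigl[\xi \mapsto \phi(\xi)(a)\bigr]$ viewed inside $\Mod^{**} \cong \Mod$.

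The only remaining task is to verify that the composite bijection really is implemented by the map $h \mapsto h \circ d$ with the $d$ of \eqref{univ_der_eval}. This is a bookkeeping exercise in unwinding the canonical identifications above, and I expect it to be the only minor obstacle in an otherwise routine argument.
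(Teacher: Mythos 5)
Your proof of (ii) $\Rightarrow$ (i) is exactly the paper's. For (i) $\Rightarrow$ (ii) you take a genuinely different route. The paper establishes the bijection $\Der_k(\ZLA,\Mod) \cong \Modules{\ZLA}(\D_\ZLA^*,\Mod)$ first only for $\Mod = \ZLA$, where it is the double-dual isomorphism $\D_\ZLA \cong \D_\ZLA^{**}$, then bootstraps to finitely generated free modules by additivity and naturality and to general fgp modules as direct summands of these; it then invokes representability (``abstract nonsense'') to conclude that a natural bijection must be of the form $h \mapsto h \circ d$ for a universal element $d$, which is finally identified with the evaluation derivation by a naturality-diagram chase. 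You instead construct the bijection uniformly for every fgp $\Mod$ at once via a chain of dualization isomorphisms, with the explicit correspondence $D \mapsto (\xi \mapsto \xi \circ D)$ between $\Mod$-valued derivations and module maps $\Mod^* \to \D_\ZLA$ doing the work that the paper's free-module bootstrapping does; all three of your isomorphisms are valid for the reasons you give, resting on reflexivity of fgp modules exactly as the paper's argument does. Your route is more explicit and sidesteps the representability step, at the cost of the final unwinding you defer; that unwinding does check out: tracing $h \in \Modules{\ZLA}(\D_\ZLA^*,\Mod)$ through your chain, the resulting derivation sends $a$ to the element $\xi \mapsto \xi(h(da))$ of $\Mod^{**}$, which is $h(d(a))$ under the canonical identification $\Mod \cong \Mod^{**}$, so the composite is indeed $h \mapsto h \circ d$. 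Both approaches are correct; the paper's scales more readily to statements where only naturality is available, while yours gives a self-contained hands-on bijection and, as a bonus, explicitly verifies that \eqref{univ_der_eval} is a derivation, which the paper leaves implicit.
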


As usual for a universal property, condition~\ref{fgp_kahler} characterizes $\Omega_{\ZLA}$ up to unique isomorphism.

\begin{proof}
	Assume first that~\ref{fgp_kahler} holds.
	Then considering $\Mod = \ZLA$ in~\eqref{algebraifold_omega} gives an isomorphism
	\[
		\D_{\ZLA} \cong \Modules{\ZLA}(\Omega_{\ZLA}, \ZLA) = \Omega_{\ZLA}^*.
	\]
	Thus $\D_{\ZLA}$ is the dual of an fgp module and therefore itself fgp, which already proves~\ref{algebraifold_item}.

	Conversely, assuming~\ref{algebraifold_item} we will construct an isomorphism
	\beq
		\label{derM_hom}
		\Der_k(\ZLA, \Mod) \cong \Modules{\ZLA}(\D_{\ZLA}^*, \Mod)
	\eeq
	that is natural in fgp modules $\Mod$, and the fact that it is of the claimed form~\eqref{algebraifold_omega} with $\Omega_\ZLA = \D_\ZLA^*$ is then automatic by abstract nonsense~\cite[Proposition~2.4.8]{riehl}.
	For $\Mod = \ZLA$, there clearly is such an isomorphism since the canonical map
	\beq
		\label{canonical_doubledual}
		\D_{\ZLA} \longrightarrow \D_{\ZLA}^{**}
	\eeq
	is an isomorphism by the fgp assumption.
	This canonical isomorphism is obviously natural with respect to module endomorphisms of $\ZLA$, and it follows that we obtain a natural bijection~\eqref{derM_hom} for any finitely generated free module $\Mod = \ZLA^{\oplus n}$.
	Finally, every finitely generated projective module $\Mod$ is a direct summand of some $\ZLA^{\oplus n}$, and this implies the claim in general.

	It remains to characterize the universal derivation as~\eqref{univ_der_eval}.
	By abstract nonsense, the universal derivation is the counterpart of $\id_{\D_{\ZLA}^*}$ on the left-hand side of~\eqref{derM_hom}.
	In order to show that this is given by~\eqref{univ_der_eval}, consider the naturality diagram
	\[
		\begin{tikzcd}
			\Der_k(\ZLA, \D_{\ZLA}^*)	\ar[draw=none,"\displaystyle{\cong}" description]{r} \ar[swap]{d}{v^{**} \circ -}	& \Modules{\ZLA}(\D_{\ZLA}^*, \D_{\ZLA}^*) \ar{d}{v^{**} \circ -}	\\
			\D_{\ZLA} = \Der_k(\ZLA, \ZLA) \ar[draw=none,"\displaystyle{\cong}" description]{r}	& \Modules{\ZLA}(\D_{\ZLA}^*, \ZLA)
		\end{tikzcd}
	\]
	where $v^{**} \in \D_{\ZLA}^{**}$ is the element associated to any $v \in \D_{\ZLA}$, namely the map $\D_{\ZLA}^* \to \ZLA$ given by evaluation on $v$.
	Our goal is to show that the candidate universal derivation~\eqref{univ_der_eval} in the top left corresponds to the identity in the top right.
	Since the module homomorphisms $v^{**}$ separate the elements of $\D_{\ZLA}^*$, it is enough to show that this correspondence holds at the bottom after composing both by $v^{**}$ as indicated.
	Since $v^{**}$ is given by evaluation on $v$, this results in the map $a \mapsto v(a)$ on the left-hand side, which is $v$ itself.
	On the right-hand side, we trivially get $v^{**}$.
	This indeed reproduces the canonical map~\eqref{canonical_doubledual}, which is how we had constructed the bottom isomorphism.
\end{proof}

\begin{ex}
	In the manifold case, the fact that the universal derivation is given by~\eqref{univ_der_eval} amounts to the standard fact that differential $1$-forms can be seen as dual to vectors, where for $f \in C^\infty(M)$ its differential $df$ acts on a vector field $v$ by the directional derivative, $(df)(v) = v(f)$, which is exactly the action of $v$ as a derivation on $f$.
\end{ex}

\begin{rem}
	\label{dual_bases_algebraifold}
	Sometimes it is convenient to choose dual bases $u_1, \ldots, u_n$ of $\D_{\ZLA}$ and $\xi_1, \ldots, \xi_n$ of $\Omega_{\ZLA}$ in the sense of~\eqref{duality}.
	More specifically, the proof of \Cref{fgp_dual} shows that $(\xi_i)$ can actually be \emph{any} generating set of $\Omega_{\ZLA}$ as an $\ZLA$-module.
	Since the differentials $da$ for $a \in \ZLA$ generate $\Omega_{\ZLA}$, we can take $\xi_i = d a_i$ for suitable $a_1, \ldots, a_n \in \ZLA$.
	Like this, due to $(d a_i)(v) = v(a_i)$ the dual basis properties~\eqref{duality} and~\eqref{duality2} read
	\begin{equation}
		\label{dual_bases_algebraifold_eq}
		\sum_{i=1}^n v(a_i) \, u_i = v, \qquad \sum_{i=1}^n \eta(u_i) \, d a_i = \eta
	\end{equation}
	for all $v \in \D_{\ZLA}$ and $\eta \in \Omega_{\ZLA}$.
	In geometrical terms, this amounts to a way of writing every vector field as a canonical $\ZLA$-linear combination of generating vector fields.
	As we will see in the discussion of differentials of algebraifold maps in \Cref{lem_explicit_pullback} and after, a dual basis in this form can play a role similar to that of charts in the manifold context.
\end{rem}

The following example suggests that we may think of the $a_i$ in \Cref{dual_bases_algebraifold} as generalized ``coordinates'' and of the $u_i$ as generalized ``coordinate vector fields''.
In fact, this way of thinking is quite indicative of the typical uses of~\eqref{dual_bases_algebraifold_eq} for algebraifolds, as appearing e.g.~in \Cref{lem_explicit_pullback}.

\begin{ex}
	\label{Rn_dual_bases}
	For the manifold $M = \R^n$, we can take the coordinate functions and their associated vector fields,
	\begin{equation}
		\label{Rn_dual_bases_eq}
		a_i = x_i, \qquad \qquad u_i = \frac{\partial}{\partial x_i},
	\end{equation}
	and the dual basis relation~\eqref{dual_bases_algebraifold_eq} amounts to the usual way of writing every vector field and $1$-form as a linear combination of coordinate vector fields and $1$-forms,
	\[
		v = \sum_{i=1}^n v(x_i) \, \frac{\partial}{\partial x_i}, \qquad
		\eta = \sum_{i=1}^n \eta\left(\frac{\partial}{\partial x_i}\right) \, dx_i.
	\]
\end{ex}

What is special about~\eqref{Rn_dual_bases_eq} is that in this case, the matrix $(u_i(a_j))_{i,j=1}^n$ with entries from $\ZLA$ is the identity matrix.
That this does not hold in general is the main difference between coordinates and coordinate vector fields on the one hand and the dual bases approach on the other.
For the latter, this matrix seems to be an important invariant of the dual bases under consideration.
Although its appearance is an additional complication relative to coordinates and coordinate vector fields, a nice feature of this approach is that it applies globally.

\begin{rem}
	\label{algebraifold_vs_LR}
	One may wonder whether it is really necessary to consider \emph{all} derivations as the analogue of vector fields or whether one could consider an fgp $\ZLA$-module $\E$ of vector fields as a separate piece of structure which acts on $\ZLA$ by derivations through an $\ZLA$-linear Lie algebra homomorphism $\E \to \D_{\ZLA}$.
	This would result in a definition similar to a (commutative) Lie--Rinehart algebra~\cite{huebschmann3}.
	Modulo some technical variations, this is essentially the route taken with the \emph{differential triads} of Mallios~\cite{malliosI,malliosII}, the \emph{differential calculi} of Beggs and Majid~\cite{BM} and the \emph{Rinehart spaces} of Pessers and van der Veken~\cite{PV}, all apparently independently.

	Indeed this kind of setting seems meaningful and may be of interest for geometry and physics.
	We have opted for our definition, which fixes $\D_{\ZLA} = \Der_k(\ZLA, \ZLA)$ as the algebraic analogue of the space of vector fields, for several reasons:
	\begin{itemize}
		\item We want to have a formalism that exactly specializes to manifold differential geometry when instantiated on the $\R$-algebras $C^\infty(M)$.
			Lie--Rinehart algebras generalize Lie algebroids instead.
			Thus using such a formalism does \emph{not} give an algebraic analogue of vector fields, but rather of something a bit different.
		\item Proving the existence of dual bases of the form~\eqref{dual_bases_algebraifold_eq}, which will come in handy in \Cref{cats} in the consideration of differentials of algebraifold maps, requires $\Omega_\ZLA$ to be generated by $1$-forms of the form $da$.
			While this automatically holds for algebraifolds, it is clearly not automatic for the module $\E^*$ in a Lie--Rinehart algebra.\footnote{Pessers and van der Veken consider this an additional condition on their \emph{Rinehart spaces} called \emph{regularity}~\cite[Section~3]{PV}.}
		\item In case that $\E \to \D_{\ZLA}$ is injective, or without loss of generality if $\E \subseteq \D_{\ZLA}$, then it may still be the case that $\E = \Der_{\hat{k}}(\ZLA,\ZLA)$, where
			\[
				\hat{k} \coloneqq \{ a \in \ZLA \mid D(a) = 0 \;\; \forall D \in \E \},
			\]
			is the ring of constants.
			Whenever this happens, $\ZLA$ can be treated as an algebraifold over $\hat{k}$.

			For example in the manifold case, a general submodule $\E \subseteq \D_{C^\infty(M))}$ closed under the Lie bracket corresponds to a foliation of $M$.
			The ring of constants $\hat{k} \subseteq C^\infty(M)$ then consists of the functions constant along the leaves.
			We have $\E = \Der_{\hat{k}}(C^\infty(M), C^\infty(M))$ as soon as the leaves are the fibres of a submersion, in which case we are back in the situation of \Cref{parametric}.
			This brings us back to the first item above: algebraifolds are the algebraic analogue of manifolds, while the more general Lie--Rinehart algebras can also be used as algebraic analogues of foliations.
	\end{itemize}
\end{rem}

\section{Examples of algebraifolds and standard form}
\label{algebraifold_examples}

\begin{ex}
	For a non-compact connected manifold $M$, one may be tempted to consider a number of variations on the algebraifold $C^\infty(M)$ from \Cref{mfd_ex} by imposing additional conditions on the smooth functions under consideration.
	However, neither of our attempts at constructing an $\R$-algebraifold in this way has succeeded.
	This includes the following:
	\begin{enumerate}
		\item\label{compact_support}
			$C^\infty_{c,1}(M)$, the algebra of smooth functions which are constant outside of a compact set, is \emph{not} an algebraifold.\footnote{This algebra is the unital version of the non-unital algebra of compactly supported functions $C^\infty_c(M)$ in that $C^\infty_{c,1}(M) = C^\infty_c(M) \oplus \R$. We consider this version, as our definition of algebraifold requires unitality to begin with (although this could potentially be relaxed).}

			To see this, note first that the module of derivations $\D_{C^\infty_{c,1}(M)}$ again consists of \emph{all} vector fields on $M$.
			Indeed it is clear that every vector field defines such a derivation. Conversely, that every $\R$-linear derivation is of this form follows by the same reasoning as in the $C^\infty(M)$ case~\cite[Chapter~9]{nestruev}.
			Furthermore, this module is not finitely generated: if it was, then the module of vector fields modulo the submodule of compactly supported vector fields would also be finitely generated, but this is not the case as it is infinite-dimensional as an $\R$-vector space, and acting by an element of $C^\infty_{c,1}(M)$ on this module only amounts to multiplication by a scalar.
		\item $C^\infty_b(M)$, the algebra of bounded smooth functions on $M$, is \emph{not} an algebraifold either.

			To see this, we argue first that the module of derivations $\D_{C^\infty_b(M)}$ consists of the compactly supported vector fields on $M$. 
			Indeed every such vector field clearly defines a derivation on $C^\infty_b(M)$.
			Conversely, using the same arguments as in~\cite[Chapter~9]{nestruev} again shows that every derivation is given by differentiation along a vector field.
			Now if the support of such a vector field is not compact, then we can choose a divergent sequence of points contained in it, and find a bounded smooth function $f \in C^\infty_b(M)$ whose directional derivatives along the vector field are not bounded by making it oscillate more sufficiently quickly towards infinity.

			Having established that $\D_{C^\infty_b(M)}$ corresponds to the compactly supported vector fields on $M$, it is again enough to note that this module is not finitely generated.
			This is simply because any submodule generated by finitely many compactly supported vector fields can only contain vector fields whose support is contained in the union of the supports of the generators, which is again a compact set.
	\end{enumerate}
	Our next counterexample is conjectural.
	\begin{enumerate}[resume]
		\item The unital \emph{Schwartz space} $\mathcal{S}(\R^n)$ consists of all smooth functions $\R^n \to \R$ whose partial derivatives of order $\ge 1$ all decay faster than polynomial.\footnote{Similar to~\ref{compact_support}, this is the unital version of the standard Schwartz space.}
			We suspect that this is not an algebraifold either (for $n \ge 1$).

			In more detail, it seems plausible that the module of derivations $\D_{\mathcal{S}(\R^n)}$ consists of the smooth vector fields on $\R^n$ that are bounded in (Euclidean) length by a polynomial.
			Indeed in one direction, it is easy to see that every such polynomial defines a derivation on $\mathcal{S}(\R^n)$.
			Conversely, by the same arguments as before every derivation corresponds to a smooth vector field $v$; what still needs to be shown is that such a vector field is necessarily upper bounded by a polynomial.
			Assuming that this is the case, the same argument as in \ref{compact_support}, quotienting the module of vector fields upper bounded by a polynomial by the submodule of vector fields with components in $\mathcal{S}(\R^n)$.
	\end{enumerate}
	These negative examples suggest that the embedding of the category of manifolds into the category of $\R$-algebraifolds (\Cref{cats}) is quite rigid.
\end{ex}

So on the positive side, what are other examples of algebraifolds?

\begin{ex}
	\label{poly_ring}
	For any $n \in \N$, the polynomial ring $\ZLA \coloneqq k[x_1,\ldots,x_n]$ over any commutative ring $k$ is a $k$-algebraifold.
	Indeed a $k$-derivation $D : \ZLA \to \ZLA$ is uniquely determined by its values on the variables $x_1, \dots, x_n$ by $k$-linearity and the Leibniz rule, and these values can be chosen completely arbitrarily.
	Therefore $\D_{\ZLA}$ is free of rank $n$, and in particular fgp.
	A convenient basis is given by the partial derivative operators $\frac{\partial}{\partial x_i}$ for $i = 1, \ldots, n$.
\end{ex}

\begin{ex}
	\label{algebraic_varieties}
	The \newterm{Zariski-Lipman conjecture}~\cite{hochster}\footnote{We thank MathOverflow user \emph{jg1896} for pointing this out to us.} is an open problem in commutative algebra, which in our language asks when a finitely generated algebra without zero divisors is an algebraifold.

	To state it in detail, let $k$ be a field of characteristic zero.
	Then if $\ZLA$ is a finitely generated \emph{regular}\footnote{Intuitively, regularity is a condition on an algebra that has the same geometrical flavour as the ``locally looking like $\R^n$'' property of a manifold.} $k$-algebra without zero divisors, it is known that $\ZLA$ is an algebraifold since already the module of Kähler differentials $\widehat{\Omega}_{\ZLA}$ is fgp~\cite[{}15.2.11]{MR_nnr}, which is sufficient to conclude that $\ZLA$ is an algebraifold with
	\begin{equation}
		\label{iskahler}
		\Omega_{\ZLA} = \widehat{\Omega}_{\ZLA}
	\end{equation}
	by \Cref{kahler_algebraifold}.
	The long-standing open problem is now whether the converse holds as well~\cite{hochster}: if $\ZLA$ is a finitely generated $k$-algebraifold without zero divisors, does this imply regularity?

	This is known to be true under additional conditions~\cite{hochster,BG,tipler}.
	The fact that no counterexamples are known suggests that our definition of algebraifold is at least quite close to regularity in this class of algebras.
	This makes intuitive sense as both properties encode a similar type of smoothness.
\end{ex}

\begin{ex}
	\label{field_ext}
	Let $k$ be a field and $k \subseteq \ZLA$ a separable field extension, for example an extension of characteristic zero.
	Then this makes the larger field $\ZLA$ into a $k$-algebraifold if and only if the extension is finitely generated.
	For example, the field of rational functions in $d$ variables $k(x_1, \ldots, x_n)$ is finitely generated, and it is therefore a $k$-algebraifold.

	To see why the relevant condition is finite generation, note first that the module of Kähler differentials $\widehat{\Omega}_{\ZLA}$ is a vector space over $\ZLA$ with a basis given by a transcendence basis of the extension~\cite[Proposition~6.1.15]{liu}.
	Since
	\[
		\D_{\ZLA} = \left( \widehat{\Omega}_{\ZLA} \right)^*,
	\]
	the claim follows by the fact that the dual of an $\ZLA$-vector space is finite-dimensional if and only if the original $\ZLA$-vector space is.
	Note that we again have~\eqref{iskahler} in this case.

	For an example that is less trivial than $k(x_1, \dots, x_n)$, consider an \emph{elliptic function field} like
	\[
		\ZLA \coloneqq k(x, \sqrt{x^3 + 1}),
	\]
	where $k$ is an arbitrary field of characteristic different from $2$ and $3$.
	This is the field whose elements are formal expressions in $x$ and $\sqrt{x^3 + 1}$ with denominators allowed, subject to the usual rules for how to calculate with such expressions.\footnote{Fully formally, $\ZLA$ is defined as the field of fractions of the quotient ring $k[x, y] / (y^2 - x^3 - 1)$, where the variable $y$ plays the role of $\sqrt{x^3 + 1}$. Since the polynomial $y^2 - x^3 - 1$ is irreducible, this quotient ring indeed has no zero divisors, so that its field of fractions can be formed.}
	The $\ZLA$-vector space of derivations $\D_{\ZLA}$ is one-dimensional:
	with $\{x\}$ as a transcendence basis, the associated single basis vector of $\D_{\ZLA}$ is the derivative operator $\frac{\partial}{\partial x}$, which acts on the generating elements via
	\[
		\frac{\partial}{\partial x} (x) = 1, \qquad \quad \frac{\partial}{\partial x} \left( \sqrt{x^3 + 1} \right) = \frac{3x^2}{2\sqrt{x^3 + 1}},
	\]
	and extends uniquely from there by $k$-linearity and the Leibniz rule.

	Equivalently, we could also consider
	\[
		\ZLA = k(y, \sqrt[3]{y^2 - 1}),
	\]
	which is an isomorphic elliptic function field, where the isomorphism is given by $x \mapsto \sqrt[3]{y^2 - 1}$ in one direction and by $y \mapsto \sqrt{x^3 + 1}$ in the other.
	This isomorphism is a fully algebraic analogue of a coordinate transformation.
	Using now $\{y\}$ as a transcendence basis, we can also use $\frac{\partial}{\partial y}$ as a basis vector for $\D_{\ZLA}$, with a similarly explicit action on the generating elements.
	The change of basis between the two choices of basis vector can be expressed as
	\[
		\frac{\partial}{\partial y} = \frac{2 y}{3 x^2} \, \frac{\partial}{\partial x}.
	\]
\end{ex}

\Cref{poly_ring,algebraic_varieties,field_ext} are quite special in that they are finitely generated as algebras over $k$ and have no zero divisors, which is related to the fact that our algebraifold $1$-forms coincide with the Kähler differentials~\eqref{iskahler}.
In addition to those of the form $C^\infty(M)$, there also exist other interesting algebraifolds of a more analytical flavour in which these additional properties do not hold.

\begin{ex}
	Let $S \subseteq \C^n$ be any open connected subset.
	Then $\mathcal{O}(S)$, the $\C$-algebra of holomorphic functions on $S$, is an algebraifold.
	Indeed every derivation $D : \mathcal{O}(S) \to \mathcal{O}(S)$ is of the form $D = \sum_{i=1}^n f_i \frac{\partial}{\partial z_i}$ for coefficient functions $f_1, \dots, f_n \in \mathcal{O}(S)$~\cite{BP}, and therefore the module of derivations is free of rank $n$.
	In fact, this even works with $k = \Z$ as the base ring instead, due to the surprising result that every (merely $\Z$-linear) derivation $\mathcal{O}(S) \to \mathcal{O}(S)$ is $\C$-linear~\cite[Corollary~5.2]{BZ}.
\end{ex}

\begin{ex}
	Let $C^\omega(\R^n)$ be the $\R$-algebra of real-analytic functions on $\R^n$.
	Then this is an algebraifold, since every $\R$-linear derivation on $C^\omega(\R^n)$ is given by a real-analytic vector field~\cite[Theorem~5.1]{grabowski}, and the module of these vector fields is free of rank $n$ (as follows by writing them in components).

	More generally, if $M$ is a real-analytic manifold, then $\D_{C^\omega(M)}$ is again given by the real-analytic vector fields on $M$~\cite[Theorem~5.1]{grabowski}.
	It therefore seems plausible that $C^\omega(M)$ is an $\R$-algebraifold as well, but we have not yet been able to prove this.
\end{ex}

So far, all of our examples of $\R$-algebraifolds involve functions ``finer'' than smooth, in the sense that $\ZLA$ is contained in the algebra of smooth functions in each case.
What about algebraifolds which properly extend algebras of smooth functions?
Our next example indicates that finding such algebraifolds is not an easy task.

\begin{ex}
	For a non-interesting example of an algebraifold which properly contains $C^\infty(\R^n)$, consider the $\R$-algebra of all continuous functions $C(\R^n)$.	
	This algebra admits no nonzero derivations at all.\footnote{See e.g.~\href{https://ncatlab.org/nlab/show/derivation\#DerOfContFuncts}{https://ncatlab.org/nlab/show/derivation\#DerOfContFuncts}} 
	Thus this algebra is technically an $\R$-algebraifold, but clearly of an uninteresting kind.
\end{ex}

To exclude pathological examples like these, we frequently impose the following additional condition, a variant of the \emph{connectedness condition} of Beggs and Majid~\cite[Definition.1.1(4)]{BM}.

\begin{defn}
	A $k$-algebraifold is in \newterm{standard form} if:
	\begin{enumerate}
		\item The homomorphism $k \to \ZLA$ which defines scalar multiplication is injective.
		\item For all $a \in \ZLA$,
			\[
				D(a) = 0 \quad \forall D \in \D_{\ZLA} \qquad \Longrightarrow \qquad a \in k.
			\]
	\end{enumerate}
\end{defn}

The idea is that the elements of $k$ should be exactly the constants, geometrically interpreted as those functions which have vanishing derivative in all directions.
Being in standard form is essentially a without loss of generality assumption for the following reason.
The set
\beq
	\label{hatk}
	\hat{k} \coloneqq \{ a \in \ZLA \mid D(a) = 0 \;\; \forall D \in \D_{\ZLA} \}
\eeq
is a subring of $\ZLA$ that we call the \newterm{ring of constants}.
Every $k$-derivation $D$ is automatically a $\hat{k}$-derivation by definition of $\hat{k}$ and a straightforward application of the Leibniz rule.
In other words, we have
\[
	 \D_{\ZLA} = \Der_{\hat{k}}(\ZLA, \ZLA),
\]
and this shows that $\ZLA$ is also a $\hat{k}$-algebraifold, now in standard form by construction.
One can thus assume standard form without loss of generality.

\begin{ex}
	For the $\R$-algebra $C(\R^n)$, which has no nonzero derivations, the ring of constants is $C(\R^n)$ itself.
	Therefore the standard form is trivial in this case, in the sense that the algebraifold coincides with the base ring, and this makes precise the idea that it is an uninteresting example.
\end{ex}

\begin{ex}
	\label{mfd_sf}
	For $M$ a manifold, the $\R$-algebraifold $C^\infty(M)$ is in standard form if and only if $M$ is connected.
	Indeed, the ring of constants~\eqref{hatk} contains precisely those functions that are constant on connected components.
	In particular, if $M$ has $n \in \N$ components, then its ring of constants is isomorphic to $\R^n$ with componentwise multiplication.
\end{ex}

\begin{ex}
	\label{poly_ring2}
	For any commutative ring $k$ of characteristic zero, the polynomial ring $\ZLA = k[x_1, \ldots, x_n]$ as considered in \Cref{poly_ring} is clearly an algebraifold in standard form.
	The situation is different in characteristic $\ell > 0$, since then the ring of constants is the subalgebra generated by the $\ell$-th powers of the variables $x_i^\ell$.
\end{ex}

\begin{ex}
	\label{field_ext2}
	Continuing on from \Cref{field_ext}, let $k \subseteq \ZLA$ be a finitely generated separable field extension in characteristic zero.
	Then we have that $\hat{k}$ is
	\[
		\hat{k} = \{ a \in \ZLA \mid a \text{ is algebraic over } k \},
	\]
	which is a subfield of $\ZLA$ known as the \emph{field of constants}~\cite{suzuki}.
	In particular, $\ZLA$ is in standard form if and only if the only elements of $\ZLA$ algebraic over $k$ are those of $k$ itself.
	For example, this is automatically the case if $k$ is algebraically closed.

	In characteristic $\ell > 0$, the situation is more complicated as in \Cref{poly_ring2}, since then for example in $\ZLA \coloneqq k(x)$ we have
	\[
		\frac{\partial (x^\ell)}{\partial x} = 0,
	\]
	so that $x^\ell$ also belongs to the ring of constants.
\end{ex}

We now return to the question of how to find interesting algebraifolds over $\R$ which property extend algebras of smooth functions.

\begin{ex}
	\label{colombeau}
	For a smooth manifold $M$, we write $\mathscr{G}(M)$ for the \newterm{Colombeau algebra} of generalized functions on $M$.\footnote{There are various versions of the Colombeau algebra, but we limit our discussion to the ``special'' one as it seems to be simplest to define and best understood. We refer to~\cite{GKOS} for a nice textbook account of the main variants, namely the ``special'' and ``full'' Colombeau algebras.}
	These were defined for open sets $M \subseteq \R^n$ by Colombeau~\cite{colombeau1,colombeau2} and generalized to manifolds in~\cite{AB,dRD}.
	Subsequently, Kunzinger and Steinbauer developed foundational aspects of distributional differential geometry in terms of these algebras~\cite{KS}.
	See also~\cite{GKOS} for a comprehensive textbook account.

	Since the Colombeau algebra satisfies a sheaf condition~\cite[Proposition~3.2.3]{GKOS}, to define $\mathscr{G}(M)$ it is enough to do so under the additional assumption that $M \subseteq \R^n$ is an open subset.
	In this case, $\mathscr{G}(M)$ is usually defined as the quotient of the algebra
	\begin{equation}
		\label{colombeau_before_quotient}
		\left\{ (f_\eps)_{\eps \in (0,1]} \in C^\infty(M)^{(0,1]} \:\bigg|\: \forall K, \alpha \: \exists r > 0 : \: \sup_{x \in K} \, \left| (\partial^\alpha f)(x) \right| = O(\eps^{-r}) \right\},
	\end{equation}
	where $K \subseteq M$ ranges over all compact subsets and $\alpha$ over all multiindices over $1, \dots, n$, by the ideal of \emph{negligible} elements,
	\begin{equation}
		\left\{ (f_\eps)_{\eps \in (0,1]} \in C^\infty(M)^{(0,1]} \:\bigg|\: \forall K, \alpha \: \forall s > 0 : \: \sup_{x \in K} \, \left| (\partial^\alpha f)(x) \right| = O(\eps^s) \right\}.
	\end{equation}
	So roughly speaking, $\mathscr{G}(M)$ contains families of functions where the quotienting ensures that only the behaviour as $\eps \to 0$ is of interest, and where these families may diverge in the limit $\eps \to 0$.
	Clearly $C^\infty(M)$ can be embedded into $\mathscr{G}(M)$ by mapping every smooth function to the corresponding constant sequence.
	But also the space of Schwartz distributions on $M$ embeds (non-canonically) into $\mathscr{G}(M)$~\cite[Theorem~3.2.10]{GKOS}.
	To first approximation, one may think of $\mathscr{G}(M)$ as an algebra which lifts Schwartz distributions to a setting in which their multiplication is well-defined and satisfies some intuitively desirable properties.

	It is known that that the module of derivations of $\mathscr{G}(M)$ over $k = \R$ coincides with the module of similarly defined \emph{generalized vector fields}, which are a special case of generalized sections of vector bundles over $M$~\cite[Theorem~7]{KS}.
	These modules are known to be fgp~\cite[Theorem~3.2.22]{GKOS}, and therefore $\mathscr{G}(M)$ is an algebraifold over $\R$.
	It is not in standard form for any manifold $M \neq \emptyset$, since the ring of constants consists of all those elements that can be represented by families $(f_\eps)_{\eps \in [0,1)}$, where the $f_\eps$ are constant on the connected components of $M$~\cite[Section~1.2.4]{GKOS}.

	Due to the technical complexity of working with generalized function algebras like the Colombeau algebra, we will not discuss them any further in this paper.
	So let us merely note that the framework for differential geometry with generalized functions of~\cite{KS} can be viewed as an instance of the algebraifold formalism.\footnote{This does not mean that analytical considerations become obsolete in any sense, but only that familiarity with algebraifolds can help with understanding the purely formal aspects of this flavour of differential geometry.}
	We hope that this observation can contribute to its broader adoption, which seems of interest especially in physics thanks to the possibility of considering distributional metrics and tensors.
	In addition, it would be interesting to see whether other variants of the Colombeau algebras\footnote{In particular, the full Colombeau algebra on a smooth manifold and the special Colombeau algebra with smooth parameter dependence~\cite{BK} come to mind.} are algebraifolds as well, in which case our formalism would immediately yield associated flavours of differential geometry.
\end{ex}

Our next two conjectural examples extend the algebra of smooth functions in a way that feels similar to the function fields from \Cref{field_ext}.

\begin{conj}
	For a smooth manifold $M$, let $C^\infty_d(M)$ be the $\R$-algebra of densely defined smooth functions $f : (D_f \subseteq M) \to \R$ with domain $D_f \subseteq M$ an embedded submanifold, and where two such functions are identified if they coincide on the intersection of their domains.
	Then it seems plausible that $C^\infty_d(M)$ is an $\R$-algebraifold, with $\D_{C^\infty_d(M)}$ consisting of all densely defined smooth vector fields on $M$.
\end{conj}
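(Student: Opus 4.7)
The plan is to verify the conjecture in three stages: first that $C^\infty_d(M)$ is a well-defined commutative $\R$-algebra, then that $\D_{C^\infty_d(M)}$ coincides with the module $\Nod$ of densely defined smooth vector fields, and finally that this module is fgp over $C^\infty_d(M)$. I would begin by noting that every embedded submanifold is locally closed, and that a locally closed dense subset of a Baire space is necessarily open; hence every admissible domain $D_f$ is in fact open and dense in $M$. Since the intersection of two open dense subsets is again open dense, pointwise operations on representatives descend to well-defined $\R$-algebra operations on equivalence classes, and transitivity of the equivalence follows because two smooth functions agreeing on an open dense subset of a common domain must agree on all of it by continuity.

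For the identification of derivations, one direction is immediate: each $v \in \Nod$ yields a derivation on $C^\infty_d(M)$ via pointwise action on the intersection of domains. For the converse, given a derivation $D : C^\infty_d(M) \to C^\infty_d(M)$, I would construct $v$ locally. Around each $p \in M$, pick a chart $(U, x_1, \ldots, x_n)$ and globally smooth extensions $\tilde x_i \in C^\infty(M)$ with $\tilde x_i = x_i$ in a neighborhood of $p$; on the open dense intersection $W$ of the domains of $D(\tilde x_1), \ldots, D(\tilde x_n)$ with $U$, put
\[
	v|_W \coloneqq \sum_{i=1}^n D(\tilde x_i) \, \frac{\partial}{\partial x_i}.
\]
A Hadamard-type Taylor expansion, combined with $\R$-linearity and the Leibniz rule, shows this definition is independent of the choices of chart and extension, so the local pieces patch to a single element $v \in \Nod$. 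The equality $D(f) = v(f)$ for $f \in C^\infty(M)$ falls out of the same Taylor argument. To extend the equality to all of $C^\infty_d(M)$, I would prove the standard locality lemma: if $f \in C^\infty_d(M)$ vanishes on an open set $V$ then so does $D(f)$. Given $p \in V$, one picks a bump function $\chi \in C^\infty(M)$ with $\chi \equiv 1$ near $p$ and support in $V$; then $\chi f = 0$ in $C^\infty_d(M)$, so the Leibniz rule forces $\chi D(f) = -D(\chi) f = 0$ near $p$, whence $D(f) = 0$ there. With locality in hand, an arbitrary $f \in C^\infty_d(M)$ agrees near every point of $D_f$ with $\chi f \in C^\infty(M)$ for a suitable bump function $\chi$ compactly supported in $D_f$, and locality propagates $D(\chi f) = v(\chi f)$ to $D(f) = v(f)$ locally.

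For the fgp property, I would exhibit the natural base-change map $C^\infty_d(M) \otimes_{C^\infty(M)} \D_{C^\infty(M)} \to \Nod$, sending $f \otimes u$ to the densely defined vector field $fu$, as an isomorphism. Surjectivity follows by expressing an arbitrary densely defined vector field in terms of a finite global generating set of $\D_{C^\infty(M)}$ (available via a Whitney embedding and the dual basis lemma), and injectivity follows from the dual-basis characterization of fgp modules. Since $\D_{C^\infty(M)}$ is fgp by \Cref{mfd_ex} and fgp-ness is preserved under base change, $\Nod$ is fgp over $C^\infty_d(M)$. The main obstacle is stage two: one must carefully thread domains of definition through the Taylor expansion and locality arguments to obtain honest equalities in $C^\infty_d(M)$ rather than merely pointwise identities on varying dense sets. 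A secondary subtlety in stage three is the availability of a finite global generating set for $\D_{C^\infty(M)}$, which is standard for reasonable manifolds but would need to be handled with care in pathological or non-paracompact cases.
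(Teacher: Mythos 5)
The statement you are proving is left as an open \emph{conjecture} in the paper -- no proof is given there -- so the only question is whether your proposal actually closes it. It does not: the gap you yourself flag as ``the main obstacle'' in stage two is not a routine bookkeeping matter but the precise point where the standard argument breaks down, and it is presumably why the statement is only conjectured. The classical proof that derivations are vector fields hinges on pointwise evaluation at the base point $p_0$ of $D$ applied to the Hadamard remainders $g_{i}$ (or to the second-order factors $a_j, b_j$). Here those remainders depend on $p_0$, and $D(g_{i,p_0})$ is only densely defined, with a domain that depends on $g_{i,p_0}$ and hence on $p_0$; nothing guarantees that $p_0$ lies in that domain for a dense set of $p_0$. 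One cannot rescue the evaluation by continuity either: on the dense set where the Leibniz identity holds, the error term is a product of a factor vanishing at $p_0$ with $D(g_{i,p_0})$, which may blow up at $p_0$ faster than any polynomial rate (densely defined smooth functions can behave like $e^{1/x^2}$ near a missing point), so iterating the Taylor expansion does not terminate the argument.

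There is a structural reason to expect this difficulty to be essential rather than technical: for every $p \in M$ one has $\mu_p \cdot C^\infty_d(M) = C^\infty_d(M)$, where $\mu_p \subseteq C^\infty(M)$ is the maximal ideal at $p$ (on $\R$, for instance, $1 = x \cdot \tfrac{1}{x}$ exhibits $1 \in \mu_0 \cdot C^\infty_d(\R)$). Thus $C^\infty_d(M)$ is not a \emph{geometric} module over $C^\infty(M)$ in Nestruev's sense, and the machinery underlying \cite[Chapter~9]{nestruev} -- which is exactly what the paper invokes for the analogous identifications in its worked examples -- is unavailable. Your stages one and three are essentially fine (the observation that a dense embedded submanifold is open, the well-definedness of the algebra, and the base-change/dual-basis argument showing that the densely defined vector fields form an fgp module over $C^\infty_d(M)$ all go through), but stage three only yields the conjecture conditional on stage two, i.e.\ on excluding exotic derivations of $C^\infty_d(M)$ not induced by densely defined vector fields. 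As written, the proposal does not prove the conjecture.
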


\begin{conj}
	For a smooth manifold $M$, let
	\[
		S \coloneqq \{ f \in C^\infty(M) \mid f^{-1}(0) \text{ has empty interior} \}
	\]
	be the set of ``almost invertible'' smooth functions.
	Then $S$ does not contain any zero divisors, since $fg = 0$ for $f \in S$ implies that $g$ must vanish on a dense subset, so that $g = 0$ by continuity.
	Furthermore, $S$ clearly contains $1$ and is closed under multiplication.
	Therefore the localization
	\[
		\ZLA \coloneqq S^{-1} C^\infty(M) 
	\]
	makes sense and is an $\R$-algebra containing $C^\infty(M)$.
	We suspect that this $\R$-algebra is an algebraifold as well, with module of derivations $\D_\R(S^{-1} C^\infty(M)) = S^{-1} \D_\R(C^\infty(M))$.
\end{conj}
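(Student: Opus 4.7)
The strategy is to establish the isomorphism $\D_{\ZLA} \cong S^{-1} \D_{C^\infty(M)}$ for $\ZLA \coloneqq S^{-1} C^\infty(M)$, which suffices since the localization of an fgp module is fgp. Two reductions simplify the task. First, derivations on a localization extend uniquely from the base algebra via the quotient rule, so restriction yields $\Der_\R(\ZLA, \ZLA) \cong \Der_\R(C^\infty(M), \ZLA)$. Second, since $\Omega_{C^\infty(M)}$ is fgp and hence finitely presented, the hom functor commutes with localization in its target:
\[
	\Modules{C^\infty(M)}(\Omega_{C^\infty(M)}, \ZLA) \;\cong\; S^{-1}\Modules{C^\infty(M)}(\Omega_{C^\infty(M)}, C^\infty(M)) \;=\; S^{-1}\D_{C^\infty(M)}.
\]
The remaining task is to extend the universal property of \Cref{manifold_univ_der} from fgp targets so as to obtain the bijection $\Der_\R(C^\infty(M), \ZLA) \cong \Modules{C^\infty(M)}(\Omega_{C^\infty(M)}, \ZLA)$.

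Concretely, I would construct the natural map $\phi\colon S^{-1}\D_{C^\infty(M)} \to \D_{\ZLA}$ sending $v/s$ to the derivation $f/t \mapsto (t v(f) - f v(t))/(s t^2)$, and verify injectivity directly: if $\phi(v/s) = 0$ then for each $f \in C^\infty(M)$ there exists $t_f \in S$ with $t_f v(f) = 0$, and since the zero set of $t_f$ has empty interior, $v(f)$ vanishes on a dense open subset and hence on all of $M$ by continuity, forcing $v = 0$. For surjectivity, fix a dual basis $u_1, \ldots, u_n \in \D_{C^\infty(M)}$ and $a_1, \ldots, a_n \in C^\infty(M)$ as in \Cref{dual_bases_algebraifold}. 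Given a derivation $D_0\colon C^\infty(M) \to \ZLA$, clear a common denominator $s \in S$ so that $s\,D_0(a_i) \in C^\infty(M)$ for each $i$, and set $v \coloneqq \sum_i (s\,D_0(a_i))\,u_i \in \D_{C^\infty(M)}$. Verifying $D_0 = v/s$ reduces, via the dual basis identity $df = \sum_i u_i(f)\,da_i$ in $\Omega_{C^\infty(M)}$, to the representation formula
\[
	D_0(f) \;=\; \sum_{i=1}^n u_i(f)\,D_0(a_i) \qquad \forall f \in C^\infty(M).
\]

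The main obstacle is precisely this representation formula. For derivations into fgp or more generally \emph{geometric} $C^\infty(M)$-modules in Nestruev's sense it is immediate from the universal property, but $\ZLA$ is extremely non-geometric over $C^\infty(M)$: already for $M = \R$ we have $\mathfrak{m}_p\,\ZLA = \ZLA$ for every point $p \in \R$, because the function $x - p$ lies in $S$ and is therefore invertible in $\ZLA$, so $\bigcap_{p \in M} \mathfrak{m}_p \ZLA = \ZLA$ rather than $0$. The most promising workaround is to decompose $\ZLA$ as the directed union $\bigcup_{s \in S} C^\infty(M)[1/s]$ and establish the formula one $s$ at a time: a derivation into $C^\infty(M)[1/s]$ restricts, under the embedding $C^\infty(M)[1/s] \hookrightarrow C^\infty(M \setminus Z(s))$, to an honest smooth derivation on the dense open set $M \setminus Z(s)$, where smooth locality and bump function arguments should reconstruct the representation in terms of the $u_i$. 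Controlling how the permitted poles along $Z(s)$ interact with smooth locality on $M$ itself is the delicate analytic point that constitutes the heart of the difficulty, and is why this example remains conjectural.
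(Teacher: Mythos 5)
The statement you are proving is labelled a \emph{conjecture} in the paper, and the paper offers no proof of it whatsoever --- the text only records the suspicion that $\D_\R(S^{-1}C^\infty(M)) = S^{-1}\D_\R(C^\infty(M))$. So there is no argument of the author's to compare yours against; the question is only whether your proposal closes the gap, and by your own admission it does not. Your two reductions are correct and standard: derivations extend uniquely along a localization via the quotient rule, and $\mathrm{Hom}$ out of the finitely presented module $\Omega_{C^\infty(M)}$ commutes with localization of the target. Your injectivity argument for $\phi$ is also sound (a fraction $v(f)/s$ vanishing in $S^{-1}C^\infty(M)$ forces $v(f)$ to vanish on the complement of a nowhere-dense zero set, hence everywhere). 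The genuine gap is exactly where you place it: the representation formula $D_0(f) = \sum_i u_i(f)\,D_0(a_i)$ for an arbitrary derivation $D_0 : C^\infty(M) \to \ZLA$. The standard Hadamard-lemma proof of this formula works by showing that the difference of the two sides lies in $\mu_p \Mod$ for every point $p$ and then invoking geometricity of the target, and you correctly observe that $\ZLA$ is as far from geometric as possible since $\mu_p\ZLA = \ZLA$ for every $p$.

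Your proposed workaround has an additional difficulty beyond the one you name. A derivation $D_0 : C^\infty(M) \to \ZLA = \bigcup_{s\in S} C^\infty(M)[1/s]$ need not factor through any single stage $C^\infty(M)[1/s]$ of the directed union, because the denominators of $D_0(f)$ may vary with $f$ without a common bound; so you cannot simply ``establish the formula one $s$ at a time.'' Clearing denominators for the finitely many $D_0(a_i)$ is fine, but to verify $D_0 = v/s$ on a general $f$ the Hadamard argument applies $D_0$ to auxiliary functions (the remainder terms and bump functions of the local expansion at $p$), and \emph{their} images may carry denominators that vanish at $p$ itself, at which point the evaluation-at-$p$ step that drives the argument becomes unavailable. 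Whether this can be controlled --- say by choosing the auxiliary functions uniformly enough that the offending zero sets avoid a dense set of points --- is precisely what would need to be proved, and nothing in your proposal or in the paper does so. In short: a correct and honest reduction of the conjecture to a single analytic statement, but not a proof.
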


As a final class of examples, we note that the algebraifold formalism can natively treat fibred manifolds.

\begin{ex}
	\label{parametric}
	Let $f : M \to N$ be a smooth map between smooth manifolds.
	Then composing with $f$ turns smooth functions on $N$ into smooth functions on $M$, and this is a ring homomorphism $C^\infty(f) : C^\infty(N) \to C^\infty(M)$.
	This homomorphism makes $C^\infty(M)$ into a $C^\infty(N)$-algebra, and we claim that it is an algebraifold provided that $f$ is a submersion.

	To see this, we need to show that the module of $C^\infty(N)$-linear derivations
	\[
		\D_{C^\infty(N)}(C^\infty(M))
	\]
	is fgp.
	To this end, recall first that the $\R$-linear derivations on $C^\infty(M)$ correspond to the vector fields on $M$.
	Then $\D_{C^\infty(N)}(C^\infty(M))$ is the submodule consisting of the \emph{vertical} vector fields, since taking the directional derivative along a vector field $v$ is a $C^\infty(N)$-linear operation if and only if all fibrewise constant smooth functions have vanishing derivative along $v$.
	Since the vertical vector fields are the sections of a vector bundle over $M$, we conclude that they form an fgp module over $C^\infty(M)$ by~\Cref{SSS}.
	Therefore we are indeed dealing with an algebraifold.

	For this algebraifold to be in standard form, a necessary (and possibly sufficient) condition is for the set
	\[
		\{ y \in N \mid \text{the fibre $f^{-1}(y)$ is connected} \}
	\]
	to be dense in $N$.
	In this case, any smooth function whose directional derivative along all vertical vector fields vanishes must be constant on connected fibres\footnote{To see this, note that connectedness for a fibre $f^{-1}(y)$ implies path-connectedness, and consider a local trivialization on each point of a path to obtain that such a function must be constant along the path.}, from which the claim follows by density.
	For example, if the submersion $f$ is a smooth fibre bundle with connected fibre, then this algebraifold is in standard form.

	The gist of this example is that the algebraifold formalism automatically covers the idea of \emph{fibred} differential geometry, in light of the surjective submersion $f : M \to N$ making $M$ into a fibred manifold over $N$.
	We expect similar constructions to be possible for our other examples as well, and possibly for algebraifolds in general.
\end{ex}

For the rest of the paper, we will focus on the finitely generated field extensions (\Cref{field_ext}) and fibred manifolds (\Cref{parametric}) as our main examples of algebraifolds in addition to those of the form $C^\infty(M)$.
To end this section, we consider a construction of algebraifolds which generalizes the disjoint union of smooth manifolds.
This also formalizes the most trivial part of the Colombeau algebra construction, namely the formation of the power $C^\infty(M)^{(0,1]}$ in~\eqref{colombeau_before_quotient}, in general.

\begin{prop}
	\label{coprods}
	Let $(\ZLA_i)_{i \in I}$ be a family of algebraifolds in standard form over commutative rings $(k_i)_{i \in I}$, and such that the modules of derivations $\Der_{k_i}(\ZLA_i)$ are uniformly finitely generated.
	Then the direct product $\prod_{i \in I} \ZLA_i$ is an algebraifold in standard form over the direct product $\prod_{i \in I} k_i$.
\end{prop}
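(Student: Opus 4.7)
The plan is to show that derivations on the product algebra decompose as a product of derivations on the factors, and then to invoke uniform finite generation to secure the fgp property.

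First, I would set $A \coloneqq \prod_{i \in I} \ZLA_i$ and $k \coloneqq \prod_{i \in I} k_i$, and study the orthogonal idempotents $e_i \in A$ that are $1$ in component $i$ and $0$ elsewhere. Since $e_i = e_i^2$, any $k$-derivation $D$ on $A$ satisfies $D(e_i) = 2 e_i D(e_i)$; multiplying by $e_i$ yields $e_i D(e_i) = 0$, so in fact $D(e_i) = 0$. A short calculation with $e_j a e_i = 0$ for $j \ne i$ then shows that $D(a e_i)$ is supported on component $i$, and similarly $e_i D(a(1 - e_i)) = 0$. This allows me to define, for each $i$, a $k_i$-linear derivation $D_i : \ZLA_i \to \ZLA_i$ by reading off the $i$-th component of $D \circ \iota_i$, where $\iota_i : \ZLA_i \to A$ is the (non-unital) section into the $i$-th factor, and the decomposition $a = a e_i + a(1 - e_i)$ yields $D(a)_i = D_i(a_i)$ for every $a = (a_i)_i$.

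Conversely, any family $(D_i)_{i \in I}$ with $D_i \in \D_{\ZLA_i}$ assembles into a componentwise $k$-derivation on $A$, with the Leibniz rule inherited factor by factor. These two constructions are inverse to one another, so I obtain a canonical isomorphism of $A$-modules
\[
	\D_A \;\cong\; \prod_{i \in I} \D_{\ZLA_i}.
\]
This decomposition is the main conceptual step; the only subtlety is handling infinite $I$, where the idempotent arguments above (rather than finite-sum expansions) are crucial.

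Next, I would use uniform finite generation to upgrade this isomorphism to an fgp statement. Fix $n$ such that each $\D_{\ZLA_i}$ is generated by at most $n$ elements; since $\D_{\ZLA_i}$ is projective, any surjection $\ZLA_i^n \epic \D_{\ZLA_i}$ splits, exhibiting $\D_{\ZLA_i}$ as a direct summand $\D_{\ZLA_i} \oplus P_i = \ZLA_i^n$. Taking products over $i$ and using the natural identification $\prod_i \ZLA_i^n = A^n$, I obtain
\[
	\D_A \;\cong\; \prod_{i \in I} \D_{\ZLA_i} \;\;\text{is a direct summand of}\;\; A^n,
\]
so $\D_A$ is fgp of rank at most $n$ over $A$. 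The main obstacle will be precisely this step: without uniformity, the product of the $\D_{\ZLA_i}$ is still an $A$-module but need not be finitely generated, and the argument breaks down.

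Finally, standard form is straightforward from the componentwise description. The map $k \to A$ is the product of the injective maps $k_i \to \ZLA_i$, hence injective. Given $a = (a_i)_i \in A$ with $D(a) = 0$ for all $D \in \D_A$, the assembling construction lets me extend any $D_i \in \D_{\ZLA_i}$ to a derivation on $A$ that is zero outside component $i$; applying this to $a$ yields $D_i(a_i) = 0$ for every $D_i \in \D_{\ZLA_i}$, so by the standard form of $\ZLA_i$ we have $a_i \in k_i$, and therefore $a \in k$.
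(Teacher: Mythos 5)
Your proposal is correct and follows essentially the same route as the paper: establish the isomorphism $\D_A \cong \prod_i \D_{\ZLA_i}$ via the idempotent argument ($D(e_i)=0$), then use uniform finite generation to realize $\D_A$ as a direct summand of $A^n$ by assembling the splittings componentwise, and finally deduce standard form from the componentwise description. Your write-up of the standard-form step (extending each $D_i$ to a derivation on $A$ supported on the $i$-th factor) is slightly more explicit than the paper's, but the argument is the same.
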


\begin{proof}
	Let us write $\ZLA \coloneqq \prod_{i \in I} \ZLA_i$ and $k \coloneqq \prod_{i \in I} k_i$ for brevity, and consider each $\ZLA_i$ as included in $\ZLA$ as the $i$-th component.
	We then first consider the map
	\begin{align}
	\begin{split}
		\label{coprod_der}
		\D_k(\ZLA) & \longrightarrow \prod_{i \in I} \D_{k_i}(\ZLA_i) \\
		D & \longmapsto \left( a \mapsto D(a)_i \right)_{i \in I}.
	\end{split}
	\end{align}
	Indeed $a \mapsto D(a)_i$ is a derivation on $\ZLA_i$ as a straightforward consequence of the Leibniz rule and $D(1_i) = 0$, which holds because $1_i$ is an idempotent in $\ZLA$.\footnote{If $e = e^2$ is any idempotent in a ring and $D$ any derivation, then applying the Leibniz rule to $D(e) = D(e^2) = D(e^3)$ shows $D(e) = 0$. Hence $e$ belongs to the ring of constants, meaning that $D(e a) = e D(a)$ for all $a$.}

	We claim that~\eqref{coprod_der} is an isomorphism of $\ZLA$-modules with respect to the obvious componentwise $\ZLA$-module structure on the right-hand side.
	Indeed the $\ZLA$-linearity of the map is obvious.
	To see that~\eqref{coprod_der} is an injection, suppose that $D$ is such that $D(a)_i = 0$ for all $i$ and all $a \in \ZLA_i$.
	Since $1_i$ belongs to the ring of constants, it follows that even $D(a) = 0$.
	Therefore $D$ vanishes on the elements lying in any $\ZLA_i$, which is enough to conclude that $D = 0$ by the fact that each $1_i$ belongs to the ring of constants.
	Finally the surjectivity is easy to see: for any family of derivations $(D_i)_{i \in I}$ with $D_i \in \D_{k_i}(\ZLA_i)$, we can define a derivation on $\ZLA$ by
	\[
		D((a_i)_{i \in I}) \coloneqq \left( D_i(a_i) \right)_{i \in I},
	\]
	and it is clear that this recovers the $D_i$ under~\eqref{coprod_der}.
	The isomorphism~\eqref{coprod_der} is therefore established, and it also follows that $k$ is the ring of constants for $\ZLA$ by the standard form assumption on the $\ZLA_i$.

	With the isomorphism~\eqref{coprod_der} at hand, it follows that $\D_k(\ZLA)$ is an fgp $\ZLA$-module by the uniform finite generation hypothesis.
	The latter guarantees that the $n \in \N$ in \Cref{fgp_dual}\ref{fgp_retract} can be chosen independent of $i$, and hence we obtain the same property of $\D_k(\ZLA)$ by choosing the module homomorphisms $u$ and $\xi$ componentwise.
\end{proof}

\begin{ex}
	Let $(M_i)_{i \in I}$ be a family of $n$-dimensional\footnote{If one does define ``manifold'' such that different components are allowed to have different dimensions (\Cref{mfd_dim_fn}), }
	connected smooth manifolds.
	Then their disjoint union $\coprod_{i \in I} M_i$ is an $n$-dimensional smooth manifold as well, and we have a canonical isomorphism
	\[
		C^\infty \left( \coprod_{i \in I} M_i \right) \cong \prod_{i \in I} C^\infty(M_i).
	\]
	Now provided that the $M_i$ have uniformly finite sets of generating vector fields, \Cref{coprods} applies and shows that $C^\infty \left( \coprod_{i \in I} M_i \right)$ is an algebraifold in standard form over $\R^I$, in accordance with \Cref{mfd_ex,mfd_sf}.

	This class of examples also shows that the statement of \Cref{coprods} is not true without the hypothesis of uniform finite generation.
	Indeed for the direct product $\ZLA \coloneqq \prod_{n \in \N} C^\infty(\R^n)$, the isomorphism~\eqref{coprod_der} still holds, and therefore a derivation on $\ZLA$ consists of a sequence of vector fields in all dimensions.
	Clearly the $\ZLA$-module consisting of such sequences is not finitely generated.
\end{ex}

\section{Tensors, connections and curvature}
\label{sec:diff_geom}

We now continue to develop basic differential geometry, in the sense of pseudo-Riemannian geometry, with algebraifolds.
As outlined at the beginning of \Cref{sec:algebraifolds}, this includes defining the notions of tensor, connection and curvature, including metric tensors, the Levi--Civita connection and the Riemann and Ricci curvature tensors; geodesics will be considered in \Cref{geodesics}.
Since our developments here are largely completely parallel to standard textbook material in the manifold case, we will keep this somewhat brief.
The main purpose of this section is to show that these notions \emph{can} be developed just the same for any algebraifold, with the additional requirement that $2 \in k$ should be invertible in order for the Levi--Civita connection to be defined (\Cref{two_inv}).

\begin{nota}
	Throughout this section, we fix an algebraifold $\ZLA$ over a commutative ring $k$ and use the shorthand notations
	\[
		\D \coloneqq \D_{\ZLA}, \qquad \D^* \coloneqq \Omega_{\ZLA}, \qquad \otimes_\ZLA \coloneqq \otimes.
	\]
\end{nota}

We write $\D^*$ instead of $\Omega_{\ZLA}$ for the module of $1$-forms in order to emphasize the duality with $\D$, which is more natural in the context of tensor calculus than the universal property of \Cref{kahler_algebraifold}.

\subsection{Tensors}

When applied to the $\R$-algebraifolds of the form $C^\infty(M)$, the following specializes to one of the standard definitions of tensor field in the manifold case~\cite[Section~7.3]{lee}; see also~\cite[Corollary~7.5.7]{conlon} for an explicit statement of the equivalence with the ``pointwise'' definition of tensor on a smooth manifold.

\begin{defn}
	\label{defn_tensor}
	On a $k$-algebraifold $\ZLA$, and for $r,s \in \N$, a \newterm{tensor of rank $(r,s)$} is an $\ZLA$-multilinear map
	\beq
		\label{einstein_algebra_tensor}
		\underbrace{\D^* \times \cdots \times \D^*}_{r \textrm{ factors}} \times \underbrace{\D \times \cdots \times \D}_{s \textrm{ factors}} \longrightarrow \ZLA.
	\eeq
	We write $\T^r_s$ for the set of these tensors.
\end{defn}

The usual universal property of tensor products let us equivalently consider such a map as an ordinary $\ZLA$-linear map
\[
		\underbrace{\D^* \otimes \cdots \otimes \D^*}_{r \textrm{ factors}} \otimes \underbrace{\D \otimes \cdots \otimes \D}_{s \textrm{ factors}} \longrightarrow \ZLA.
\]
In the manifold case, this is closely related to the fact that the sections functor from vector bundles to fgp modules preserves tensor products (\Cref{SSS}).

By the duality theory of fgp modules, as recalled in \Cref{fgp} and \eqref{fgp_hom} in particular, we can always turn an occurrence of $\D^*$ on the left into an occurrence of $\D$ on the right and vice versa.
Hence we could equivalently define a tensor of rank $(r,s)$ as an $\ZLA$-linear map
\beq
	\label{s_to_r_tensor}
	\underbrace{\D \otimes \cdots \otimes \D}_{s \textrm{ factors}} \longrightarrow \underbrace{\D \otimes \cdots \otimes \D}_{r \textrm{ factors}},
\eeq
or yet again equivalently as and $\ZLA$-linear map
\beq
	\label{r_to_s_tensor}
	\underbrace{\D^* \otimes \cdots \otimes \D^*}_{r \textrm{ factors}} \longrightarrow \underbrace{\D^* \otimes \cdots \otimes \D^*}_{s \textrm{ factors}},
\eeq
or finally simply as an element of
\[
	\underbrace{\D \otimes \cdots \otimes \D}_{r \textrm{ factors}} \otimes \underbrace{\D^* \otimes \cdots \otimes \D^*}_{s \textrm{ factors}}.
\]
This shows that in terms of dual bases as in~\eqref{duality} and \Cref{dual_bases_algebraifold}, every tensor can be written as an $\ZLA$-linear combination of elementary tensors formed out of these basis elements.
Of course, the various descriptions listed above can also be mixed, in the sense that only some of the factors of $\D$ and $\D^*$ are moved to the other side.
This will turn out to be useful below in~\eqref{contraction}.

\begin{ex}
	The \newterm{Kronecker tensor} $\delta \in \T^1_1$ is defined in terms of~\eqref{einstein_algebra_tensor} as
	\[
		\delta(\xi, v) \coloneqq \xi(v),
	\]
	or simply as the identity map $\D \to \D$ in terms of~\eqref{s_to_r_tensor} or~\eqref{r_to_s_tensor}.
	The fact that these define the same tensor is a consequence of \Cref{fgp_dual}, and is particularly easy to see in terms of the graphical calculus of \Cref{fgp_rem}\ref{string_diagrams}, where the Kronecker tensor is denoted by a vertical line labelled by $\D$.
\end{ex}

\begin{ex}
	Clearly $(0,1)$-tensors are just $1$-forms, $\T^0_1 = \D^*$.
	In particular, for every $a \in \ZLA$ we recover the derivative $1$-form $da \in \D^*$ as the $\ZLA$-liner map $\D \to \ZLA$ defined by
	\[
		(da)(v) \coloneqq v(a) \qquad \forall v \in \D.
	\]
\end{ex}

\begin{ex}
	\label{tensors_examples}
	\begin{enumerate}
		\item Of course, for an algebraifold of the form $C^\infty(M)$, the tensors are exactly the tensors on the manifold $M$ in the usual sense.
		\item\label{poly_tensors}
			For a polynomial ring $\ZLA = k[x_1, \ldots, x_n]$, the tensors of rank $(r,s)$ are the elements of
			\[
				\underbrace{(\ZLA^n)^* \otimes \cdots \otimes (\ZLA^n)^*}_{r \textrm{ factors}} \otimes \underbrace{\ZLA^n \otimes \cdots \otimes \ZLA^n}_{s \textrm{ factors}},
			\]
			which is canonically isomorphic to $\ZLA^{n^{r+s}}$.
			In other words, due to the fact that we have designated coordinate directions, a tensor of rank $(r,s)$ is just an $(r+s)$-dimensional array of polynomials in $n$ variables.
		\item For a finitely generated field extension $k \subseteq \ZLA$ of transcendence degree $n = \dim_{\ZLA}(\D)$, the tensors of rank $(r,s)$ form an $\ZLA$-vector space given by
			\[
				\underbrace{\D \otimes \cdots \otimes \D}_{r \textrm{ factors}} \otimes \underbrace{\D^* \otimes \cdots \otimes \D^*}_{s \textrm{ factors}},
			\]
			which is an $\ZLA$-vector space of dimension $n^{r+s}$.
		\item\label{parametric_tensors}
			For a smooth submersion $f \colon M \to N$ of smooth manifolds as in \Cref{parametric}, we saw that $\D$ is the module of vertical vector fields on $M$.
			Therefore a tensor of rank $(r,s)$ is an element of the $r$-fold tensor power of the vector bundle of vertical vector fields on $M$ tensored by the $s$-fold power of its dual.
			Intuitively, this description amounts to saying that a tensor is a ``smoothly varying family of tensors'' on the fibres of $f$.

			For a more explicit description, note first that the module of $1$-forms $\D^*$ corresponds to the module of $1$-forms on $M$ modulo those which vanish on vertical vectors.\footnote{To see this, it is enough to show that every $C^\infty(M)$-linear map from vertical vector fields back to $C^\infty(M)$ can be extended to such a map on all vector fields.
			Thanks to the existence of partitions of unity this can be done locally, so that $f$ can be assumed to be a coordinate projection $\R^{\ell+n} \to \R^\ell$ without loss of generality. But in this case the statement is clear, as the module of vertical vector fields is a direct summand of the module of all vector fields. \label{extension_argument}}
			By the same token, a tensor of rank $(r,s)$ is then an equivalence class of ``vertical'' tensors of rank $(r,s)$ on $M$, where vertical means that it must be possible to write the tensor as a finite sum of elementary tensors with vertical vectors in the contravariant slots, and the equivalence amounts to quotienting by those vertical tensors which vanish on vertical vectors in the covariant slots.
	\end{enumerate}
\end{ex}

\subsection{Algebraic structure of tensors}

Returning now to the general theory, it is clear that the sum of two tensors of rank $(r,s)$ is again such a tensor, defined in terms of pointwise addition.
Furthermore, if $a \in \ZLA$ and $T \in \T^r_s$, then we also have a tensor $a T \in \T^r_s$ defined by
\[
	(a T)(\xi_1, \ldots, \xi_r, v_1, \ldots, v_s) \coloneqq a \, T(\xi_1, \ldots, \xi_r, v_1, \ldots, v_s,).
\]
In this way, $\T^r_s$ becomes an $\ZLA$-module.
Of course, this is also obvious from the isomorphism
\[
	\T^r_s \cong \underbrace{\D \otimes \cdots \otimes \D}_{r \textrm{ factors}} \otimes \underbrace{\D^* \otimes \cdots \otimes \D^*}_{s \textrm{ factors}}
\]
already noted above.
The definition of tensor as a multilinear map also results in a straightforward construction of the tensor products of tensors,
\[
	S \in \T^r_s, \qquad T \in \T^{r'}_{s'} \qquad \Longrightarrow \qquad S \otimes T \in \T^{r + r'}_{s + s'},
\]
simply amounting to the usual multiplication of multilinear maps which adds up the arities.
It makes the collection of all tensors into a bigraded algebra with $\ZLA$ itself in degree $(0,0)$.

An important operation on tensors in differential geometry is contraction.
Indeed we can consider a given tensor $T \in \T^{r+1}_{s+1}$ as a module homomorphism of the form
\begin{equation}
	\label{contraction}
	\underbrace{\D^* \otimes \cdots \otimes \D^*}_{r \textrm{ factors}} \otimes \underbrace{\D \otimes \cdots \otimes \D}_{s \textrm{ factors}} \longrightarrow \D^* \otimes \D,
\end{equation}
and composing with the canonical evaluation homomorphism $\varepsilon : \D^* \otimes \D \to \ZLA$, which is yet another incarnation of the Kronecker tensor, defines an element of $\T^r_s$ referred to as the \newterm{contraction} of $T$.
As usual, this can be applied with respect to any contravariant and any covariant tensor slot.
For tensors of rank $(1,1)$, this contraction results in an element of $\ZLA$ known as the \newterm{Hattori-Stallings trace}~\cite{hattori}.
For example, every $(1,1)$-tensor of the form $\xi \otimes v$ for $\xi \in \D^*$ and $v \in \D$ contracts to $\xi(v)$.
In the graphical calculus of \Cref{fgp_rem}\ref{string_diagrams}, contraction can be depicted particularly easily as ``looping'' an output wire back to an input wire.
Applying this to the Kronecker tensor gives a particularly interesting result.

\begin{defn}
	\label{dimension}
	The \newterm{dimension} $\dim(\ZLA)$ of a $k$-algebraifold $\ZLA$ is the contraction (Hattori-Stallings trace) of the Kronecker tensor $\delta$.
\end{defn}

Given the terminology, one might expect the dimension to be a number.
This is indeed the case in the following sense.

\begin{lem}
	If $\ZLA$ is in standard form, then $\dim(A) \in k$.
\end{lem}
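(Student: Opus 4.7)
The plan is to realise $\dim(\ZLA)$ concretely as the trace of an idempotent matrix and then to show that every derivation annihilates it, so that it lies in the ring of constants, which equals $k$ by the standard form hypothesis.

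First I would pick dual bases $u_1,\ldots,u_n \in \D$ and $\xi_1,\ldots,\xi_n \in \D^*$ as in \Cref{dual_bases_algebraifold}. Since $\D$ is fgp, \Cref{fgp_dual} presents it as a retract of $\ZLA^{\oplus n}$, and the corresponding idempotent endomorphism of $\ZLA^{\oplus n}$ is the matrix $A \in M_n(\ZLA)$ with entries $A_{ij} \coloneqq \xi_i(u_j)$. Unwinding the definition of Hattori--Stallings trace (or using the graphical calculus of \Cref{fgp_rem}\ref{string_diagrams}, where contracting $\delta = \sum_i u_i \otimes \xi_i$ literally closes the loop), one obtains
\[
    \dim(\ZLA) \;=\; \sum_{i=1}^n \xi_i(u_i) \;=\; \mathrm{tr}(A).
\]

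Next, I would fix an arbitrary derivation $D \in \D_\ZLA$ and extend it entrywise to $M_n(\ZLA)$, noting that $D$ satisfies the Leibniz rule for the matrix product and commutes with the trace. The key algebraic fact is that for an idempotent matrix $A^2 = A$, one can differentiate to get $DA = (DA)A + A(DA)$, and then
\[
    A(DA)A \;=\; A\bigl((DA)A\bigr) \;=\; A(DA) - A^2(DA) \;=\; A(DA) - A(DA) \;=\; 0.
\]
Taking the trace and using its cyclic invariance together with $A^2 = A$ gives $\mathrm{tr}(A(DA)) = \mathrm{tr}(A(DA)A) = 0$; similarly $\mathrm{tr}((DA)A) = \mathrm{tr}(A(DA)) = 0$. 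Combining with $DA = (DA)A + A(DA)$ yields
\[
    D(\dim(\ZLA)) \;=\; D(\mathrm{tr}(A)) \;=\; \mathrm{tr}(DA) \;=\; 0.
\]

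Since this holds for every $D \in \D_\ZLA$, the element $\dim(\ZLA) \in \ZLA$ lies in the ring of constants~\eqref{hatk}, which by the standard form assumption is precisely the image of $k$; using the injectivity of $k \to \ZLA$ this identifies $\dim(\ZLA)$ unambiguously with an element of $k$. The only subtle point is really the purely algebraic identity $A(DA)A = 0$ for an idempotent $A$; once that is in hand, the cyclicity of the trace finishes the argument, and no appeal to geometric intuition or to the specific shape of the module $\D$ is needed.
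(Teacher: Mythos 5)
Your proof is correct, but it takes a genuinely different route from the paper. The paper proves the statement by invoking the Lie derivative calculus of \Cref{lie_derivative}: since Lie derivatives commute with contraction, it suffices to check that the Kronecker tensor satisfies $\L_u(\delta) = 0$ for every $u \in \D$, which is a one-line computation from the defining properties of $\L_u$; hence $u(\dim(\ZLA)) = 0$ for all $u$, and one concludes via the ring of constants exactly as you do. Your argument instead realizes $\dim(\ZLA) = \mathrm{tr}(A)$ for the idempotent matrix $A_{ij} = \xi_i(u_j)$ coming from the retract presentation of \Cref{fgp_dual}\ref{fgp_retract} (note $u\xi = \id_{\D}$ and $\xi u$ is the idempotent on $\ZLA^{\oplus n}$), and then uses the purely matrix-algebraic facts that $A(DA)A = 0$ for an idempotent $A$ and that the trace is cyclic, to get $\mathrm{tr}(DA) = 0$. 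This is the classical proof that the Hattori--Stallings rank of an fgp module is annihilated by every derivation, and it has the virtue of being entirely self-contained: it needs only the dual basis lemma and no part of the tensor/Lie-derivative formalism of \Cref{sec:diff_geom}. The paper's proof is shorter given that machinery and is conceptually cleaner (``a canonical tensor has vanishing Lie derivative''), but yours would work verbatim for the trace of any idempotent, not just the one presenting $\D$. All the individual steps check out, including the identification of the ring of constants~\eqref{hatk} with the image of $k$ under the standard form hypothesis.
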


So in general, the dimension is an element of the ring of constants.

\begin{proof}
	Although this uses the properties of Lie derivatives introduced below, we present the proof here as these properties may already be familiar to the reader from the manifold case.

	Since Lie derivatives commute with contraction, it is enough to show that the Kronecker tensor has vanishing Lie derivative along any $u \in \D$.
	Using the other properties of Lie derivatives from \Cref{lie_derivative} as well as~\eqref{L_on_xi} and~\eqref{L_on_tensor}, we get for any $\xi \in \D^*$ and $v \in \D$,
	\[
		\L_u(\delta)(\xi, v) = \L_u(\xi(v)) - \L_u(\xi)(v) - \xi(\L_u(v)) = 0,
	\]
	as was to be shown.
\end{proof}

For explicit computations of the dimension, it can be convenient to choose dual bases of $\D$ and $\D^*$ as in~\ref{dual_bases_algebraifold}, in terms of which the dimension can be explicitly computed as
\beq
	\label{dim_trace}
	\dim(\ZLA) = \sum_{i=1}^n da_i(u_i) = \sum_{i=1}^n u_i(a_i),
\eeq
which is exactly the trace of the matrix considered after \Cref{Rn_dual_bases}.

\begin{ex}
	Let us see what the dimension is in our running examples.
	\begin{enumerate}
		\item For $C^\infty(M)$ for a manifold $M$, the dimension is the scalar function which maps every component of $M$ to its manifold dimension. This is a standard property of the Kronecker tensor.
		\item Similarly, the dimension of the polynomial $k$-algebraifold $k[x_1, \ldots, x_n]$ is $n$, as follows from~\eqref{dim_trace} upon choosing $v_i \coloneqq \frac{\partial}{\partial x_i}$ and $\xi_i \coloneqq dx_i$.
		\item If $k \subseteq \ZLA$ is a finitely generated field extension of characteristic zero, then the dimension is the transcendence degree, and in particular again an integer.
			Indeed as in \Cref{field_ext,field_ext2}, $\D$ is a free $\ZLA$-vector space of rank equal to the transcendence degree, and this is what~\eqref{dim_trace} computes.

			If $k$ has characteristic $\ell > 0$ and the extension is separable, then the dimension is still given by the transcendence degree for the same reason, but now considered as an element of $\Z / \ell$.
		\item Continuing on from \cref{parametric}, let $f : M \to N$ be a smooth submersion between smooth manifolds, and consider $C^\infty(M)$ as a $C^\infty(N)$-algebraifold.
			Then the characterization of derivations given in \cref{parametric} implies that the dimension $\dim(C^\infty(M))$ is the function which maps each point $p \in M$ to the dimension of a sufficiently small neighbourhood in the fibre containing that point, or equivalently to the dimension of the component of $M$ containing $p$ minus the dimension of the component of $N$ containing $f(p)$.
	\end{enumerate}
\end{ex}

\subsection{Lie derivatives of tensors}

We now describe how every derivation $v \in \D$ acts canonically on every tensor $T \in \T^r_s$.
In the manifold case, this action is known as \newterm{Lie derivative}.
First, for $u,v \in \D$, we obtain another derivation given by the Lie bracket,
\[
	[u,v](a) \coloneqq u(v(a)) - v(u(a))	\qquad \forall a \in \ZLA.
\]
Some straightforward calculation shows that this is indeed a derivation again, and that this Lie bracket operation turns $\D$ into a Lie algebra over $k$.
Moreover, it satisfies the equation
\beq
	\label{LR}
	[u, av] = a [u,v] + u(a) v.
\eeq
On general tensors, we have the following construction of Lie derivatives.

\begin{lem}
	\label{lie_derivative}
	Every derivation $u \in \D$ extends uniquely to a family of $k$-linear maps
	\[
		\L_u \: : \: \T^r_s \longrightarrow \T^r_s	 \qquad \forall r,s \in \N
	\]
	such that the following hold:
	\begin{enumerate}
		\item The Leibniz rule: for arbitrary tensors $S \in \T^r_s$ and $T \in \T^{r'}_{s'}$,
			\[
				\L_u(S \otimes T) = \L_u(S) \otimes T + S \otimes \L_u(T).
			\]
		\item On $a \in \ZLA$, we have
			\[
				\L_u(a) = u(a).
			\]
		\item\label{L_on_w} On $v \in \D$, we have
			\[
				\L_u(v) = [u,v].
			\]
		\item $\L_u$ commutes with tensor contraction.
	\end{enumerate}
\end{lem}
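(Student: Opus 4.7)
The plan is to establish uniqueness first, then construct $\L_u$ by specifying it on the building blocks $\ZLA$, $\D$, $\D^*$ and extending via the Leibniz rule, and finally verify the four axioms.

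For uniqueness, note that the axioms already pin down $\L_u$ on $\ZLA$ and $\D$. To force its value on a $1$-form $\xi \in \D^*$, use that $\xi(v) \in \ZLA$ is the contraction of $\xi \otimes v$. Commutation with contraction together with the Leibniz rule then gives
\[
	u(\xi(v)) \,=\, \L_u(\xi(v)) \,=\, \L_u(\xi)(v) + \xi(\L_u v) \,=\, \L_u(\xi)(v) + \xi([u,v]),
\]
so $\L_u$ on $\D^*$ is forced to be
\beq
	\label{Luxi}
	(\L_u \xi)(v) \coloneqq u(\xi(v)) - \xi([u,v]).
\eeq
Using dual bases as in \Cref{dual_bases_algebraifold}, every element of $\T^r_s$ is a finite $\ZLA$-linear combination of elementary tensors in the $u_i$ and $d a_i$, so the Leibniz rule then determines $\L_u$ everywhere.

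For existence, I would take \eqref{Luxi} as the definition of $\L_u$ on $\D^*$ and verify that $\L_u \xi$ is genuinely $\ZLA$-linear in $v$: a short calculation using~\eqref{LR},
\[
	(\L_u \xi)(av) = u(a\xi(v)) - \xi\bigl(a[u,v] + u(a) v\bigr) = u(a)\xi(v) + a\,u(\xi(v)) - a\xi([u,v]) - u(a)\xi(v) = a(\L_u\xi)(v),
\]
does the job. Next, I would extend $\L_u$ to $\T^r_s \cong \D^{\otimes r} \otimes (\D^*)^{\otimes s}$ by defining it on elementary tensors through the Leibniz rule and then by $k$-linearity. Well-definedness on the tensor product over $\ZLA$ requires compatibility with $av \otimes w = v \otimes aw$ (and the analogous relation for $\D^*$-factors); this reduces precisely to~\eqref{LR} on $\D$ and to the parallel identity $\L_u(a\eta) = u(a)\eta + a \L_u\eta$ on $\D^*$, which follows directly from \eqref{Luxi}.

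Finally, I would verify the four axioms on the resulting $\L_u$. The Leibniz rule and the prescribed values on $\ZLA$ and $\D$ hold by construction, and $k$-linearity is automatic. The only nontrivial check is compatibility with contraction: by the Leibniz rule and naturality of the evaluation pairing $\varepsilon : \D^* \otimes \D \to \ZLA$, it suffices to verify this on a single $\xi \otimes v$, where it becomes the identity
\[
	u(\xi(v)) = (\L_u \xi)(v) + \xi(\L_u v),
\]
which is exactly the defining formula \eqref{Luxi}. The main obstacle is the bookkeeping for well-definedness of the extension to tensor products over $\ZLA$; everything else is a direct consequence of the design of \eqref{Luxi}, which was chosen precisely to make contraction compatibility automatic.
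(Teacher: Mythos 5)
Your proposal is correct and follows essentially the same route as the paper: derive the forced formula $(\L_u\xi)(v) = u(\xi(v)) - \xi([u,v])$ from contraction-compatibility, check $\ZLA$-linearity via~\eqref{LR}, and extend to higher-rank tensors. The only cosmetic difference is that the paper extends via an explicit formula for $\L_u(T)$ acting on arguments $(\xi_1,\ldots,\xi_r,v_1,\ldots,v_s)$, which sidesteps your well-definedness check on the tensor product, but the content is the same.
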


\begin{proof}
	Let us consider $(0,1)$-tensors $\xi \in \D^*$ first. On these, we must have, for every $w \in \D$,
	\[
		u(\xi(v)) = \L_u(\xi(v)) = \L_u(\xi)(v) + \xi(\L_u(v))
	\]
	where the second equation holds by the Leibniz rule and the commutation with contractions.
	We can use \ref{L_on_w} in order to thus compute
	\beq
		\label{L_on_xi}
		\L_u(\xi)(v) = u(\xi(v)) - \xi([u,v]).
	\eeq
	Using~\eqref{LR}, it is easy to see that this expression is $\ZLA$-linear in $v$, and we therefore obtain an element $\L_u(\xi) \in \D^*$.

	If $T \in \T^r_s$ is now an arbitrary tensor, then a similar calculation shows that
	\begin{align}
		\label{L_on_tensor}
		\L_u(T)(\xi_1, \ldots, \xi_r, v_1, \ldots, v_s) ={} & u\left( T(\xi_1, \ldots, \xi_r, v_1, \ldots, v_s) \right) \nonumber\\
								& - \sum_{i=1}^r T(\xi_1, \ldots, \L_u(\xi_i), \ldots, \xi_r, v_1, \ldots, v_s) \\
								& - \sum_{j=1}^s T(\xi_1, \ldots, \xi_r, v_1, \ldots, \L_u(v_j), \ldots, v_s). \nonumber
	\end{align}
	It is a straightforward calculation to see that this defines a tensor again, and that this is the only definition which makes all required properties hold.
\end{proof}

\subsection{Connections}

The notion of connection from differential geometry has a standard generalization to connections on modules over rings~\cite[Definition~8.2.2]{MS}, which we now simply instantiate for algebraifolds.
We continue using the shorthand notation $\D \coloneqq \D_{\ZLA}$, assuming that $\ZLA$ is an algebraifold over a commutative ring $k$.

\begin{defn}
	Let $\Mod$ be an $\ZLA$-module. Then a \newterm{connection} on $\ZLA$ is a map
	\begin{equation}
		\label{connection}
		\nabla : \D \times \Mod \longrightarrow \Mod, \qquad (u,x) \longmapsto \nabla_u x
	\end{equation}
	which is $k$-bilinear and has the following additional properties:
	\begin{enumerate}
		\item $\nabla$ is $\ZLA$-linear in its first argument: for all $a \in \ZLA$ and $u \in \D$ and $x \in \Mod$,
			\[
				\nabla_{au} x = a \nabla_u x.
			\]
		\item $\nabla$ satisfies the Leibniz rule in its second argument: for all $a \in \ZLA$ and $u \in \D$ and $x \in \Mod$,
			\[
				\nabla_u(a x) = u(a) \hspace{1pt} x + a \nabla_u x.
			\]
	\end{enumerate}
\end{defn}

\begin{rem}
	\label{connection_form}
	By the duality between $\D$ and $\D^*$ and the $\ZLA$-linearity of a connection in the first argument, we could also consider a connection as a map $\nabla : \Mod \to \D^* \otimes \Mod$ satisfying the equation $\nabla(a x) = a \nabla(x) + da \otimes x$, and where one may think of $\D^* \otimes \Mod$ as the module of $\Mod$-valued $1$-forms.
	This picture if often used in algebraic treatments of connections, while we slightly prefer~\eqref{connection} due to its proximity to the usual differential geometric notation.
\end{rem}

\begin{ex}
	For $\ZLA = C^\infty(M)$ with a manifold $M$ and $\Mod$ the module of sections of a vector bundle, this is literally the usual notion of (linear) connection.
\end{ex}

For the remainder of this section, we limit ourselves to the case $\Mod = \D$, meaning that we consider the algebraic generalization of connections on the tangent bundle.
The following standard observation is then also as in the manifold case, with essentially identical proof.

\begin{lem}
	\label{connections_difference}
	Any two connections differ by a tensor of rank $(1,2)$.
\end{lem}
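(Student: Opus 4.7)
The plan is to take the difference $S(u,v) \coloneqq \nabla_u v - \nabla'_u v$ of two given connections and show it is $\ZLA$-bilinear as a map $\D \times \D \to \D$. By the equivalence noted around~\eqref{s_to_r_tensor}, any such $\ZLA$-bilinear map corresponds to a tensor of rank $(1,2)$, so this is all that needs to be verified.

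First I would check $\ZLA$-linearity in the first argument, which is immediate: since both $\nabla$ and $\nabla'$ are individually $\ZLA$-linear in the first slot, we get $S(au, v) = \nabla_{au} v - \nabla'_{au} v = a\nabla_u v - a\nabla'_u v = a \, S(u,v)$. The key step is the second argument, where neither connection is $\ZLA$-linear on its own. Here one computes
\[
S(u, av) = \nabla_u(av) - \nabla'_u(av) = \bigl(u(a) v + a \nabla_u v\bigr) - \bigl(u(a) v + a \nabla'_u v\bigr) = a \, S(u, v),
\]
so that the two Leibniz correction terms cancel against each other. Additivity in both arguments is inherited directly from the $k$-bilinearity of $\nabla$ and $\nabla'$.

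Having established $\ZLA$-bilinearity, I would then conclude by invoking the discussion just after \Cref{defn_tensor}: an $\ZLA$-linear map $\D \otimes \D \to \D$ is one of the equivalent incarnations of a $(1,2)$-tensor (namely~\eqref{s_to_r_tensor} with $r = 1$ and $s = 2$), and this uses nothing beyond the fgp duality of \Cref{fgp_dual} that is built into the algebraifold hypothesis. There is no real obstacle here; the only subtlety worth flagging is that one should be precise that the difference is a tensor on $\D$-valued arguments, not on $\Omega_\ZLA$-valued ones, so that applying the duality is what converts the $\ZLA$-bilinear map into the stated tensor rank.
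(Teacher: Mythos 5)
Your proposal is correct and follows essentially the same route as the paper's own proof: first-slot $\ZLA$-linearity is inherited from the connections, and the Leibniz correction terms $u(a)v$ cancel in the second slot, after which the bilinear map is identified with a $(1,2)$-tensor via the equivalent descriptions following \Cref{defn_tensor}. Your closing remark about which incarnation of the tensor one obtains is a reasonable clarification but not a substantive difference.
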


\begin{proof}
	Let $\nabla$ and $\nabla'$ be two connections. Then it needs to be shown that their difference, which is a $k$-bilinear map $\D \times \D \to \D$ given by
	\[
		(u,v) \longmapsto \nabla_u v - \nabla'_u v,
	\]
	is $\ZLA$-linear in both arguments. This is clear for the first argument by the $\ZLA$-linearity assumption on connections. In the second argument, we get that for $a \in \ZLA$,
	\begin{align*}
		(\nabla_u - \nabla'_u)(av)	& = u(a) \hspace{1pt} v + a \, \nabla_u v - u(a) \hspace{1pt} v - a \, \nabla'_u v	\\
						& = a \hspace{1pt} (\nabla_u - \nabla'_u) \hspace{1pt} v,
	\end{align*}
	as was to be shown.
\end{proof}

\begin{ex}
	Here's what connections amount to in some of our examples.
	\begin{enumerate}
		\item In the manifold case $C^\infty(M)$, connections are of course connections in the usual sense (on the tangent bundle of $M$), and \Cref{connections_difference} amounts to expressing one connection in terms of another via a tensor. When one of these is the (locally defined) standard connection with respect to a chart, then the components of this tensor are the Christoffel symbols.
		\item \label{christoffel} Similarly, for the polynomial ring $\ZLA = k[x_1,\ldots,x_n]$, the partial derivative operators $\partial_i = \frac{\partial}{\partial x_i}$ form a basis of $\D$ over $\ZLA$, so that a generic element of $\D$ has the form
			\[
				u = \sum_i u_i \partial_i.
			\]
			In this notation, $\ZLA$ has a standard connection determined uniquely by the formula
			\beq
				\label{standard_connection}
				\nabla_{\partial_i} v := \sum_j \frac{\partial v_j}{\partial x_i} \, \partial_j,
			\eeq
			which simply amounts to taking the componentwise partial derivative of $v$ in the $i$-direction.
			Of course, for $k = \R$ this is just the usual standard connection on $\R^n$, restricted to polynomial vector fields.

			Using~\Cref{connections_difference}, one can now express every other connection on $k[x_1,\ldots,x_n]$ in the form $\nabla + \Gamma$, where $\Gamma$ is a tensor of rank $(1,2)$. Its components are once again the Christoffel symbols, which now must be polynomials rather than arbitrary smooth functions.
		\item For a finitely generated separable field extension $k \subseteq \ZLA$, let us write $\partial_i = \frac{\partial}{\partial x_i}$ for the basis of $\D$ associated to a transcendence basis $\{x_1, \ldots, x_n\} \subseteq \ZLA$.
			Then again the very same formula~\eqref{standard_connection} defines a canonical connection on $\ZLA$.
			Naturally this connection depends on the choice of transcendence basis, as one can see by explicit calculation in the elliptic function field of \Cref{field_ext}.
		\item If $f : M \to N$ is again a smooth submersion as in~\Cref{parametric}, then a connection on the associated algebraifold is an equivalence class of connections on the vector bundle of vertical vector fields on $M$, where the equivalence is $\nabla \sim \nabla'$ if and only if $\nabla_u = \nabla'_u$ for all vertical vector fields $u$.
			This can be seen by the same argument as in \Cref{extension_argument}.
	\end{enumerate}
\end{ex}

In general, a connection extends to a covariant derivative operator on tensors of any rank in the exact same way as in the manifold case. For example if $T$ is a tensor of rank $(1,1)$, then for any $u,v \in \D$ and $\xi \in \D^*$, we obtain
\[
	(\nabla_u T)(v,\xi) = u (T(v, \xi)) - T(\nabla_u v, \xi) - T(v, \nabla_u \xi),
\]
and a straightforward calculation again proves $\ZLA$-linearity in all three of $u$, $v$ and $\xi$, so that $\nabla_u T$ is indeed a tensor, now of rank $(1,2)$.

\subsection{Torsion and curvature}

The torsion of a connection $\nabla$ is the $(0,2)$-tensor defined by
\[
	T(u,v) := \nabla_u v - \nabla_v u - [u,v],
\]
and one can verify the $\ZLA$-linearity by a straightforward calculation similar to the proof of \Cref{connections_difference}.
The curvature instead is the $(1,3)$-tensor
\beq
	\label{curvature_algebraic}
	R \: : \: \D \times \D \times \D \longrightarrow \D
\eeq
given by
\[
	R(u,v) w = \nabla_u \nabla_{v} w - \nabla_{v} \nabla_u w - \nabla_{[u,v]} w,
\]
where one writes $R(u,v') w$ instead of $R(u,v,w)$ to underline the intuition that for every $u$ and $v$, the resulting $(1,1)$-tensor $R(u,v)$ is an endomorphism $\D \to \D$, which in the manifold case encodes the holonomy transformation for parallel transport along an infinitesimal rectangle spanned by $u$ and $v$.
Again a straightforward calculation verifies the relevant multilinearity for $R$ to be a tensor.

\subsection{Metric tensors}

Again we have the same definition as in the manifold case.

\begin{defn}
	A \newterm{metric tensor} or simply \newterm{metric} on an algebraifold $\ZLA$ is a symmetric $(0,2)$-tensor
	\[
		g : \D \otimes \D \longrightarrow \ZLA,
	\]
	which is nondegenerate: there must be a $(2,0)$-tensor
	\[
		g^{-1} : \D^* \otimes \D^* \longrightarrow \ZLA
	\]
	that is inverse to $g$, in the sense that the contraction of the $(2,2)$-tensor $g \otimes g^{-1}$ is the identity $(1,1)$-tensor $\D \to \D$.
\end{defn}

\begin{rem}
	\begin{enumerate}
		\item By the assumed symmetry of $g$, it does not matter which pair of slots of $g \otimes g^{-1}$ is contracted.
		\item An equivalent formulation is that a metric tensor is an $\ZLA$-module isomorphism $\D \cong \D^*$, which can be constructed by mapping $u \in \D$ to the contraction of $u \otimes g$.
			This is the \newterm{musical isomorphism}.
		\item Note that our definition does not impose any positivity or signature conditions on the metric.
			This is the same as in other algebraic approaches like Geroch's~\cite{geroch}.
			We explore some initial ideas on how to improve on this situation in \Cref{signature}.
		\item If $2 \in k$ is not invertible, for example if $k$ has characteristic $2$, then it may be more appropriate to define a metric as a \emph{quadratic} form $\D \to \ZLA$~\cite[\S{}1.4]{pfister}.
			We will not dwell on this issue and focus instead on the case where $2$ is invertible (\Cref{two_inv}).
	\end{enumerate}
\end{rem}

\begin{ex}
	\label{metric_concrete}
	\begin{enumerate}
		\item With $\ZLA = C^\infty(M)$, metrics are of course smooth pseudo\hyp{}Riemannian metrics on $M$ in the standard sense.
		\item For a polynomial ring $\ZLA = k[x_1,\ldots,x_n]$, making use of~\Cref{poly_tensors} lets us identify the metrics with those symmetric matrices with polynomial entries whose inverses are also polynomial matrices, resulting in the form
			\[
				g = \sum_{i,j=1}^n g_{ij} \hspace{1pt} dx_i \otimes dx_j,
			\]
			known from coordinate calculations on manifolds. Thus, for example with $n = 2$, and writing $k[x,y]$ for simplicity, we have such a metric given by
	\beq
		\label{poly_metric}
		g =	\left(\begin{matrix}
				1 & x \\
				x & 1 + x^2 
			\end{matrix}\right),
			\qquad\quad
		g^{-1} =	\left(\begin{matrix}
					1 + x^2 & -x \\
					-x & 1
				\end{matrix}\right),
	\eeq
	which makes sense for any commutative ring $k$. This metric can also be written in the alternative form
	\beq
		\label{poly_metric_ex}
		\begin{aligned}
			g & = dx \otimes dx + x \, (dx \otimes dy + dy \otimes dx) + (1 + x^2) \, dy \otimes dy \\
				& = (1 + x)^2 \, (dx + dy) \otimes (dx + dy) + dy \otimes dy.
		\end{aligned}
	\eeq
	\end{enumerate}
\end{ex}

\begin{rem}
	\label{signature}
	Let us comment on which additional conditions one might want to generally impose on a metric as algebraic analogues of a signature condition.
	For example, the metric~\eqref{poly_metric_ex} should arguably be considered Riemannian for any $k$, since it is a sum of squares, which indeed enforces Riemannian signature for $k = \R$.
	As a weaker condition, one could merely require that $g(v,v) \in \ZLA$ should be a sum of squares in $\ZLA$ for every $v \in \D$. In the manifold case $C^\infty(M)$, this would in particular restrict $g(v,v)$ to be a nonnegative smooth function. But unfortunately the converse does not hold: there are nonnegative smooth functions on $\R^4$ that cannot be written as sums of squares of other smooth functions~\cite{BBCFP}.
	It follows that requiring every function of the form $g(v,v)$ to be a sum of squares is sufficient but not necessary for positive semidefiniteness of $g$, and thereby indicates that this is still too strong as an algebraic signature condition.\footnote{This is reminiscent of the situation for polynomials over $\R$, where also not every nonnegative polynomial is a sum of squares~\cite{marshall}.}
	We therefore do not currently have a satisfactory algebraic generalization of a signature condition.
\end{rem}

It is a standard textbook fact that every smooth manifold admits a (Riemannian) metric.
The situation for algebraifolds is less clear.

\begin{prob}
	Does every algebraifold admit a metric?
\end{prob}

For example, if an algebraifold $\ZLA$ is such that $\D$ is a free module, then we can construct a metric by using the standard inner product with respect to any basis of $\D$.
In general, clearly a necessary condition for the existence of a metric is the existence of an $\ZLA$-module isomorphism $\D_\ZLA \cong \D_\ZLA^*$, and this observation may help in the search for a potential counterexample.

Moving on to the next concept, the \newterm{Levi--Civita connection} of a metric $g$ on an algebraifold can be defined in terms of \emph{Koszul's formula}. This means that for $u,v \in \D$, the covariant derivative $\nabla_u v \in \D$ is defined implicitly by its inner product with any $w \in \D$, which takes the form
\begin{align}
	\begin{split}
		\label{koszul}
			g(\nabla_u v, w) = \frac{1}{2} \bigg(	& u \left( g(v,w) \right) + v \left( g(w,u) \right) - w \left( g(u,v) \right)		\\
						& + g([u,v],w) - g([v,w],u) + g([w,u],v) \bigg).
	\end{split}
\end{align}
Note that due to the factor of $\frac{1}{2}$, this now only makes sense if $2$ is invertible in $k$, which we assume to be the case:

\begin{ass}
	\label{two_inv}
	For the remainder of this section, $\frac{1}{2} \in k$.
\end{ass}

A brief computation shows that the right-hand side of~\eqref{koszul} is indeed linear in $w$, which implies that we obtain a well-defined element $\nabla_u v \in \D$ upon tensoring by $g^{-1}$ and contracting in order to solve for $\nabla_u v$.
The linearity in $u$ similarly follows by a simple computation, as does the Leibniz rule in $v$. The standard argument as used in the manifold case also shows that the Levi--Civita connection is the unique torsion-free connection for which the covariant derivative of the metric vanishes,
\[
	\nabla_u g = 0 \qquad \forall u \in \D.
\]
For more detail on the derivation of Koszul's formula and the uniqueness in the algebraic setting, we also refer to the treatment of the Levi--Civita connection in the context of the \emph{Rinehart spaces} of Pessers and van der Veken~\cite[Theorem~30]{PV}, which applies in the algebraifold setting without changes.

\begin{ex}
	\label{levi_civita_christoffel}
	\label{greek indices?}
	For the polynomial ring $k[x_1,\ldots,x_n]$, \Cref{christoffel} implies that we can express the Levi--Civita connection in terms of a tensor $\Gamma$ of Christoffel symbols. Koszul's formula~\eqref{koszul} then shows that this tensor's components have the usual form
	\beq
		\label{christoffel_usual}
		\Gamma^k_{ij} = \frac{1}{2} g^{kl} \left( \frac{\partial g_{il}}{\partial x^j} + \frac{\partial g_{jl}}{\partial x^i} - \frac{\partial g_{ij}}{\partial x^l} \right),
	\eeq
	using the fact that the coordinate vector fields $\frac{\partial}{\partial x^i}$ all commute.
\end{ex}

\subsection{Riemann and Ricci curvature}

As in the manifold case, the Riemann tensor of a metric can now be defined as the curvature of its Levi--Civita connection. The \newterm{Ricci tensor} $\Ric$ is the contraction of the Riemann tensor~\Cref{curvature_algebraic} given by contracting the first argument with its output. In terms of dual bases as in \Cref{dual_bases_algebraifold}, this reads
\begin{equation}
	\label{ricci}
	\Ric(v,w) \coloneqq \sum_{i=1}^n \left( R(u_i, v) w \right) (a_i).
\end{equation}
The \newterm{Ricci scalar} $S$ is the (unique) contraction of the Ricci tensor with the inverse of the metric,
\[
	S \coloneqq \sum_{i,j=1}^n g^{-1}(da_i, da_j) \, \Ric(u_i, u_j).
\]
Both of these definitions are abstract versions of the standard expressions, extending the latter in the obvious way from the manifold setting to the algebraifold setting.

\section{The category of algebraifolds and the problem of products}
\label{cats}

Of course, an important concept in differential geometry is the notion of smooth map.
What is the analogue of that for algebraifolds?
As usual when considering algebraic formulations of geometric concepts, the direction of the arrows reverses, and we therefore start by considering maps in the algebraic direction.
The developments of this section will be relevant in particular for the consideration of geodesics in \Cref{geodesics}.

\subsection{Algebraifold homomorphisms}

The relevant definition, where $k$ is again any commutative ring, is a variant of a definition due to Beggs and Majid~\cite[(1.1)]{BM}.

\begin{defn}
	\label{algebraifold_homomorphism}
	For $k$-algebraifolds $\ZLA$ and $\ZLB$ in standard form, an \newterm{algebraifold homomorphism} is a $k$-algebra homomorphism $\varphi : \ZLA \to \ZLB$ for which there is an $\ZLA$-module map $\Omega_{\varphi} : \Omega_{\ZLA} \to \varphi^* \Omega_{\ZLB}$ such that the diagram
	\begin{equation}
		\label{pullback_1form}
		\begin{tikzcd}
			\ZLA \ar[r, "\varphi"] \ar[d, swap, "d"] & \ZLB \ar[d, "d"] \\
			\Omega_{\ZLA} \ar[r, "\Omega_{\varphi}"] & \varphi^* \Omega_{\ZLB}
		\end{tikzcd}
	\end{equation}
	commutes.
\end{defn}

Here, $\varphi^* \Omega_{\ZLB}$ denotes $\Omega_{\ZLB}$, considered as an $\ZLA$-module via restriction of scalars (\Cref{sec_extension}).

\begin{ex}
	\label{algebraifold_homomorphism_manifold}
	In the manifold case, composition with any smooth map $f : M \to N$ induces an algebraifold homomorphism
	\[
		C^\infty(f) \: : \: C^\infty(N) \longrightarrow C^\infty(M).
	\]
	Indeed as the associated map
	\[
		\Omega_{C^\infty(f)} \: : \: \Omega_{C^\infty(M)} \longrightarrow C^\infty(f)^* \Omega_{C^\infty(N)},
	\]
	we can take the pullback of $1$-forms on $N$ to $1$-forms on $M$, since this pullback operation has the defining property that
	\[
		 \Omega_{C^\infty(f)} (d g) = d(g \circ f) 
	\]
	holds for all $g \in C^\infty(N)$, which expresses the commutativity of~\eqref{pullback_1form}.
\end{ex}

So in general, the map $\Omega_{\varphi}$ from \Cref{algebraifold_homomorphism} encodes the algebraic generalization of the pullback of $1$-forms along a smooth map.
The commutativity of~\eqref{pullback_1form} amounts to the equation
\begin{equation}
	\label{pullback_1form_req}
	\Omega_{\varphi}(d a) = d (\varphi(a)) \qquad a \in \ZLA.
\end{equation}
Since $\Omega_{\ZLA}$   is generated by the differentials $d a$ for $a \in \ZLA$, it is clear that $\Omega_{\varphi}$ is unique if it exists.
Hence there is no need to consider $\Omega_{\varphi}$ as part of the data of an algebraifold homomorphism, and postulating its existence in the definition is enough.

In fact, we do not know of any single example of an algebra homomorphism $\varphi : \ZLA \to \ZLB$ between algebraifolds for which $\Omega_{\ZLA}$ does \emph{not} exist.
Therefore it is conceivable that its existence is automatic.
Indeed as we saw in \Cref{algebraifold_homomorphism_manifold}, this is what happens case in the manifold case, where the homomorphisms $C^\infty(N) \to C^\infty(M)$ are exactly the smooth maps $M \to N$~\cite[Corollary~35.10]{KMS}.
A general proof is not obvious: the universal property of $\Omega_{\ZLA}$ from \Cref{kahler_algebraifold} does not apply because $\varphi^* \Omega_{\ZLB}$ need not be fgp.

\begin{prob}
	\label{existence_pullback}
	Is the existence of $\Omega_{\varphi}$ in \Cref{algebraifold_homomorphism} automatic?
\end{prob}

For any algebraifold homomorphism $\varphi : \ZLA \to \ZLB$, we can also find an explicit expression for $\Omega_{\varphi}$ using dual bases.

\begin{lem}
	\label{lem_explicit_pullback}
	In terms of dual bases as in \Cref{dual_bases_algebraifold}, we can write $\Omega_\varphi$ explicitly as
	\begin{equation}
		\label{explicit_pullback}
		\Omega_{\varphi}(\xi) = \sum_{i=1}^n \varphi(u_i(\xi)) \, d \varphi(a_i)
	\end{equation}
	for all $\xi \in \Omega_{\ZLA}$.
\end{lem}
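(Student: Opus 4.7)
The plan is to simply expand an arbitrary $\xi \in \Omega_\ZLA$ in the dual basis of the $\xi_i = da_i$ and then push the expansion through $\Omega_\varphi$ using its $\ZLA$-linearity together with the defining commutativity of~\eqref{pullback_1form}. No further ingredients are needed.

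In more detail, my first step is to invoke the second dual basis relation in~\eqref{dual_bases_algebraifold_eq}: for any $\xi \in \Omega_\ZLA$,
\[
    \xi = \sum_{i=1}^n \xi(u_i) \, d a_i,
\]
where I identify $u_i(\xi)$ in the statement with $\xi(u_i) \in \ZLA$ via the canonical isomorphism $\D_\ZLA \cong \Omega_\ZLA^*$ (which is one of the incarnations of the fgp property already used in \Cref{kahler_algebraifold}). Applying $\Omega_\varphi$ then gives
\[
    \Omega_\varphi(\xi) = \sum_{i=1}^n \Omega_\varphi\bigl( \xi(u_i) \, d a_i \bigr).
\]
The second step is to use the $\ZLA$-linearity of $\Omega_\varphi$, remembering that the $\ZLA$-module structure on $\varphi^* \Omega_\ZLB$ is given by restriction of scalars, so that multiplication by $a \in \ZLA$ on the target acts as multiplication by $\varphi(a)$. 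This yields
\[
    \Omega_\varphi\bigl(\xi(u_i) \, d a_i\bigr) = \varphi\bigl(\xi(u_i)\bigr) \cdot \Omega_\varphi(d a_i).
\]
The third step is to substitute the commutativity condition~\eqref{pullback_1form_req}, namely $\Omega_\varphi(d a_i) = d \varphi(a_i)$, into the above to obtain the claimed formula~\eqref{explicit_pullback}.

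There is no real obstacle here; the proof is essentially a one-line manipulation. The only subtlety worth flagging is the bookkeeping of the two different $\ZLA$-module structures involved (the tautological one on $\Omega_\ZLA$ and the restricted one on $\varphi^* \Omega_\ZLB$), which is exactly what makes the factor $\xi(u_i) \in \ZLA$ turn into $\varphi(\xi(u_i)) \in \ZLB$ on the right-hand side.
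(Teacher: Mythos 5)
Your proof is correct and is essentially identical to the paper's own (one-line) argument: expand $\xi$ via the dual basis relation, use the $\ZLA$-linearity of $\Omega_\varphi$ with the restricted scalar action, and substitute $\Omega_\varphi(da_i) = d\varphi(a_i)$. Your explicit remark about the identification $u_i(\xi) = \xi(u_i)$ and the two module structures is a harmless (and helpful) elaboration of what the paper leaves implicit.
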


This formula does not immediately resolve \Cref{existence_pullback}, since using~\eqref{explicit_pullback} as the definition of $\Omega_{\varphi}$ leaves open whether the relevant condition~\eqref{pullback_1form_req} is satisfied.

\begin{proof}
	This can be seen from~\eqref{pullback_1form_req} together with $\xi = \sum_{i=1}^n u_i(\xi) \, d a_i$ and the $\ZLA$-linearity of $\Omega_{\varphi}$.
\end{proof}

In terms of the adjunction~\eqref{adjunction_scalars} between restriction and extension of scalars, we can also consider $\Omega_{\varphi}$ as a map $\varphi_* \Omega_{\ZLA} \to \Omega_{\ZLB}$, uniquely determined by the requirement that it should map
\begin{equation}
	\label{pullback_1form_req2}
	1_\ZLB \otimes d a \longmapsto d \varphi(a).
\end{equation}
This description is interesting insofar as it can be most easily instantiated and reformulated further.

\begin{ex}
	\label{tangent_action}
	In the manifold case with $\varphi = C^\infty(f)$ as in \Cref{algebraifold_homomorphism_manifold}, the $C^\infty(M)$-module $\varphi_* \Omega_{C^\infty(M)}$ corresponds to the pullback bundle of the cotangent bundle $f^* (T^* N)$.
	In this situation, the extension of scalars $\varphi_* \Omega_{C^\infty(N)}$ is (naturally isomorphic to) the module of sections of the pullback bundle $f^*(T^* N)$ by \Cref{pullback}, and the map $\varphi_* \Omega_\ZLA \to \Omega_\ZLB$ corresponds to a vector bundle map $f^*(T^* M) \to T^* M$.
	The requirement~\eqref{pullback_1form_req2} characterizes this as the usual pullback of $1$-forms via \Cref{pullback_mfd}.
\end{ex}

Let us now massage $\Omega_\varphi$ a bit further in order to see how it induces a map between modules of derivations, which will turn out to be the algebraic generalization of the differential of a smooth map..
Using first the assumption that $\D_{\ZLA}$ is fgp, we can apply the duality between $\varphi_* \D_{\ZLA}$ and $\varphi_* \Omega_{\ZLA}$ from \Cref{extension_dual_eq} to turn~\eqref{pullback_1form_req2} into an element
\[
	d\varphi \,\in\, \Omega_{\ZLB} \otimes_{\ZLA} \varphi_* \D_{\ZLA}.
\]
Concretely, in terms of dual bases $(da_i)$ of $\Omega_{\ZLA}$ and $(u_i)$ of $\D_{\ZLA}$ as in \Cref{dual_bases_algebraifold}, we can use the construction~\eqref{dual_basis_adjunct} to write this element explicitly as
\begin{equation}
	\label{differential}
	d\varphi = \sum_{i=1}^n d \varphi(a_i) \otimes (1_\ZLB \otimes u_i).
\end{equation}
Since $\Omega_{\ZLB}$ is also in duality with $\D_{\ZLB}$, we furthermore can turn $d \varphi$ into a $\ZLB$-linear map
\[
	\D_\varphi \: : \: \D_{\ZLB} \longrightarrow \varphi_* \D_{\ZLA},
\]
and this is what we call the \newterm{differential} of $\varphi$.

\begin{lem}
	In terms of dual bases as in \Cref{dual_bases_algebraifold}, we can write $\D_\varphi$ explicitly as
	\begin{equation}
		\label{explicit_differential}
		\D_\varphi(w) = \sum_{i=1}^n w(\varphi(a_i)) \otimes u_i
	\end{equation}
	for all $w \in \D_{\ZLB}$.
\end{lem}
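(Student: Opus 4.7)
The plan is to simply unwind the construction of $\D_\varphi$ from the element $d\varphi$ displayed in~\eqref{differential}, using the fact that on an algebraifold the universal derivation acts by $(db)(w) = w(b)$ (as in~\eqref{univ_der_eval}). No deep argument is needed; the main task is bookkeeping across the identifications between the various equivalent presentations of the duality.

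First I would recall the setup: the differential $\D_\varphi \colon \D_\ZLB \to \varphi_* \D_\ZLA$ is by definition the $\ZLB$-linear map obtained from
\[
	d\varphi = \sum_{i=1}^n d\varphi(a_i) \otimes (1_\ZLB \otimes u_i)
\]
by contracting the $\Omega_\ZLB$-factor against an element of $\D_\ZLB$ via the fgp duality $\Omega_\ZLB = \D_\ZLB^*$. Explicitly, for $w \in \D_\ZLB$ the pairing of $d\varphi(a_i) \in \Omega_\ZLB$ with $w$ is given by the universal derivation characterization~\eqref{univ_der_eval}, i.e. $(d\varphi(a_i))(w) = w(\varphi(a_i)) \in \ZLB$.

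Putting this together, I would compute
\[
	\D_\varphi(w) \;=\; \sum_{i=1}^n (d\varphi(a_i))(w) \cdot (1_\ZLB \otimes u_i) \;=\; \sum_{i=1}^n w(\varphi(a_i)) \otimes u_i,
\]
using $\ZLB$-linearity to pull the scalar $w(\varphi(a_i))$ inside the tensor and noting that $b \cdot (1_\ZLB \otimes u) = b \otimes u$ in $\varphi_* \D_\ZLA = \ZLB \otimes_\ZLA \D_\ZLA$. This yields exactly~\eqref{explicit_differential}.

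The only mild subtlety, and in my view the single step worth double-checking, is to verify that the formula~\eqref{explicit_differential} is genuinely independent of the choice of dual basis $(a_i, u_i)$ — or equivalently, that it is consistent with the intrinsic construction of $\D_\varphi$ rather than being an artefact of the chosen basis. This follows from the dual basis identity $\sum_i v(a_i)\, u_i = v$ from~\eqref{dual_bases_algebraifold_eq}, together with the already-established fact~\eqref{differential} for $d\varphi$ itself; since $\D_\varphi$ is by construction the image of $d\varphi$ under the isomorphism induced by the two dualities, and since $d\varphi$ is basis-independent, so is its image.
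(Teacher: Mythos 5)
Your proposal is correct and follows essentially the same route as the paper: unwind $d\varphi$ from~\eqref{differential}, pair the $\Omega_{\ZLB}$-factor against $w$ using $(d\varphi(a_i))(w) = w(\varphi(a_i))$, and absorb the resulting scalar into the tensor; the paper cites~\eqref{dual_basis_adjunct_inv} for exactly this contraction step. The closing remark on basis-independence is a reasonable extra sanity check but not needed, since the formula is derived from the intrinsically defined $d\varphi$ rather than postulated.
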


\begin{proof}
	Using $d \varphi$ in the form~\eqref{differential} together with~\eqref{dual_basis_adjunct_inv} shows that $\D_\varphi$ is given by
	\[
		\D_\varphi(w) = \sum_{i=1}^n d \varphi(a_i)(w) \, (1_\ZLB \otimes u_i) = \sum_{i=1}^n w(\varphi(a_i)) \, (1_\ZLB \otimes u_i) = \sum_{i=1}^n w(\varphi(a_i)) \otimes u_i,
	\]
	as was to be shown.
\end{proof}

By construction, the module homomorphisms $\Omega_\varphi$ and $\D_\varphi$ are each other's adjoints with respect to the canonical evaluate pairings, in the sense that we have
\begin{equation}
	\label{adjointness_pullback}
	\Omega_\varphi(\xi)(w) = (1_\ZLB \otimes \xi)(\D_\varphi(w)) \qquad \forall \xi \in \Omega_\ZLA, \, w \in \D_\ZLB,
\end{equation}
and this determines each map in terms of the other.
On the right-hand side, this uses the canonical identification of $\varphi_* \Omega_\ZLA$ as the dual of $\varphi_* \D_\ZLA$.

\begin{ex}
	We now argue that in the manifold case, the map
	\[
		\D_{C^\infty(f)} \: : \: \D_{C^\infty(M)} \longrightarrow C^\infty(f)_* \D_{C^\infty(N)}
	\]
	corresponds to the usual differential of a smooth map $f : M \to N$ as acting on tangent vectors.
	Indeed this action on tangent vectors turns every vector field on $M$ into a section of the pullback bundle $f^*(TN)$, and is therefore already of the required type by \Cref{pullback}.
	This ordinary differential is characterized by the condition that pairing a vector field $w$ on $M$ with the pullback of a $1$-form $\xi$ on $N$ produces the element of $C^\infty(M)$ equal to the pairing of the associated sections of $f^*(T N)$ and $f^*(T^* N)$.
	Since this is exactly what the adjointness relation~\eqref{adjointness_pullback} expresses, we conclude that $\D_\varphi$ indeed recovers the usual differential in the manifold case.

	For $N = \R^n$, the bundle $f^* (T N)$ is the trivial bundle of rank $n$ on $M$.
	Using the dual coordinate bases as in~\Cref{Rn_dual_bases} for $C^\infty(\R^n)$, we have $\varphi(a_i) = f_i$.
	Hence~\eqref{explicit_differential} becomes in this case
	\[
		\D_{C^\infty(f)}(w) = \sum_{i=1}^n w(f_i) \otimes \frac{\partial}{\partial x_i},
	\]
	which indeed matches the obvious coordinate expression of the Jacobian of $f$ acting on $w$ at every point.
\end{ex}

\begin{lem}
	\label{differential_char}
	For all $w \in \D_{\ZLB}$ and $a \in \ZLA$, we have
	\[
		\D_{\varphi}(w)_{(1)} \otimes \D_{\varphi}(w)_{(2)}(a) = w(\varphi(a)),
	\]
	and $\D_{\varphi} : \D_{\ZLB} \to \varphi_* \D_{\ZLA}$ is the only $\ZLB$-linear map satisfying this property.
\end{lem}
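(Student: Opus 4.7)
The plan is to establish both claims by unpacking the Sweedler-style notation in terms of the canonical pairing between $\varphi_*\D_\ZLA$ and $\varphi_*\Omega_\ZLA$, and then using the induced dual bases from \Cref{dual_bases_algebraifold} together with the adjointness relation~\eqref{adjointness_pullback}.

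For the existence claim, I would first observe that under the canonical identification $\ZLB \otimes_\ZLA \ZLA \cong \ZLB$ (given by $b \otimes a' \mapsto b \, \varphi(a')$), the expression $\D_\varphi(w)_{(1)} \otimes \D_\varphi(w)_{(2)}(a)$ is precisely the value $(1_\ZLB \otimes da)(\D_\varphi(w)) \in \ZLB$, where we use the canonical pairing of $\varphi_* \Omega_\ZLA$ with $\varphi_* \D_\ZLA$. By the adjointness relation~\eqref{adjointness_pullback} applied to $\xi = da$, this equals $\Omega_\varphi(da)(w)$. But $\Omega_\varphi(da) = d\varphi(a)$ by the defining property~\eqref{pullback_1form_req}, and so this evaluates to $w(\varphi(a))$ as required. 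Alternatively, one can verify this directly from the explicit formula~\eqref{explicit_differential}: the left-hand side becomes $\sum_i w(\varphi(a_i)) \, \varphi(u_i(a))$, and the dual basis identity $da = \sum_i u_i(a) \, da_i$ in $\Omega_\ZLA$ yields, after applying $\Omega_\varphi$ and evaluating on $w$, the equation $w(\varphi(a)) = \sum_i \varphi(u_i(a)) \, w(\varphi(a_i))$.

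For uniqueness, suppose $\psi \colon \D_\ZLB \to \varphi_* \D_\ZLA$ is any $\ZLB$-linear map satisfying $\psi(w)_{(1)} \otimes \psi(w)_{(2)}(a) = w(\varphi(a))$ for all $a \in \ZLA$ and $w \in \D_\ZLB$. By the reading of the left-hand side as $(1_\ZLB \otimes da)(\psi(w))$, this means $\psi(w)$ and $\D_\varphi(w)$ pair identically with every element of $\varphi_* \Omega_\ZLA$ of the form $1_\ZLB \otimes da$. Since the elements $da$ generate $\Omega_\ZLA$ as an $\ZLA$-module, the elements $1_\ZLB \otimes da$ generate $\varphi_* \Omega_\ZLA$ as a $\ZLB$-module, and hence the two elements of $\varphi_* \D_\ZLA$ pair identically with all of $\varphi_* \Omega_\ZLA$. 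Since $\varphi_* \D_\ZLA$ is naturally identified with its double dual via the induced dual basis $(1_\ZLB \otimes da_i,\, 1_\ZLB \otimes u_i)$ of \Cref{extension_dual_eq}, this forces $\psi(w) = \D_\varphi(w)$. Equivalently and more explicitly, expanding via this dual basis gives
\[
	\psi(w) = \sum_{i=1}^n (1_\ZLB \otimes da_i)(\psi(w)) \cdot (1_\ZLB \otimes u_i) = \sum_{i=1}^n w(\varphi(a_i)) \otimes u_i = \D_\varphi(w).
\]

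The only mildly delicate step is keeping the identification $\ZLB \otimes_\ZLA \ZLA \cong \ZLB$ and the action of derivations across the base change straight; once these bookkeeping issues are organized, both parts reduce to a direct application of the dual basis lemma and the defining property~\eqref{pullback_1form_req} of $\Omega_\varphi$.
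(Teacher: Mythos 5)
Your proof is correct and follows essentially the same route as the paper's: both read the left-hand side as the pairing $(1_\ZLB \otimes da)(\D_\varphi(w))$, apply the adjointness relation~\eqref{adjointness_pullback} and the defining property~\eqref{pullback_1form_req} of $\Omega_\varphi$, and deduce uniqueness from the fact that the elements $1_\ZLB \otimes da$ generate $\varphi_*\Omega_\ZLA$ as a $\ZLB$-module. Your extra elaboration of the uniqueness step via the dual basis expansion is a correct (and slightly more explicit) version of what the paper leaves implicit.
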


Here, the left-hand side is an element of
\[
	\varphi_* \ZLA = \ZLB \otimes_{\ZLA} \ZLA \cong \ZLB,
\]
and we suppress the isomorphism from the notation.
have suppressed the canonical isomorphism  from the notation.
We also have employed sumless Sweedler notation for $\D_\varphi(w)$.

\begin{proof}
	We have
	\begin{align*}
		\D_{\varphi}(w)_{(1)} \otimes \D_{\varphi}(w)_{(2)}(a) & = (1_\ZLB \otimes da) \D_{\varphi}(w) \\
								       & = \Omega_\varphi(da)(w) \\
								       & = d(\varphi(a))(w) \\
								       & = w(\varphi(a)),
	\end{align*}
	where the second step uses~\eqref{adjointness_pullback}.
	The uniqueness claim follows by the second expression together with the fact that the elements of the form $1_{\ZLB} \otimes da$ generate $\varphi_* \Omega_{\ZLA}$ as a $\ZLB$-module.
\end{proof}

This ends our discussion of the action of an algebraifold homomorphism on derivations and $1$-forms.
One further observation that will be useful in \Cref{geodesics} is that connections can be transported along homomorphisms by extension of scalars.

\begin{lem}
	\label{connection_transport}
	Let $\varphi : \ZLA \to \ZLB$ be a homomorphism of $k$-algebraifolds, and let
	\[
		\nabla \: : \: \D_{\ZLA} \times \Mod \longrightarrow \Mod
	\]
	be a connection on an $\ZLA$-module $\Mod$.
	Then there is an induced connection $\varphi_* \nabla$ on $\varphi_* \Mod$ uniquely determined by
	\begin{equation}
		\label{transported_connection}
		(\varphi_* \nabla)_w (1_\ZLB \otimes x) = \D_{\varphi}(w)_{(1)} \otimes \nabla_{\D_{\varphi}(w)_{(2)}}(x)
	\end{equation}
	for all $w \in \D_{\ZLB}$ and $x \in \Mod$.
\end{lem}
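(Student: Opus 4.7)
The plan is to define $(\varphi_*\nabla)_w$ on the full module $\varphi_*\Mod = \ZLB \otimes_\ZLA \Mod$ by the formula forced by the Leibniz rule together with the prescribed value on elements of the form $1_\ZLB \otimes x$. Specifically, any connection on $\varphi_*\Mod$ whose action on $1_\ZLB \otimes x$ is given by~\eqref{transported_connection} must satisfy
\[
	(\varphi_*\nabla)_w(b \otimes x) \;=\; w(b) \,(1_\ZLB \otimes x) \,+\, b \cdot \bigl(\D_\varphi(w)_{(1)} \otimes \nabla_{\D_\varphi(w)_{(2)}}(x)\bigr),
\]
so I would take this as the definition and then verify that it descends to $\varphi_*\Mod$, satisfies the connection axioms, and is unique.

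The main obstacle is well-definedness: the formula above is initially defined out of $\D_\ZLB \times \ZLB \times \Mod$, and needs to descend to $\D_\ZLB \times (\ZLB \otimes_\ZLA \Mod)$. The nontrivial requirement is compatibility with the middle-linearity relation $b\varphi(a) \otimes x = b \otimes ax$ for $a \in \ZLA$. Expanding $(\varphi_*\nabla)_w(b\varphi(a) \otimes x)$ using the Leibniz rule for $w$ on $\ZLB$, and expanding $(\varphi_*\nabla)_w(b \otimes ax)$ using the Leibniz rule for $\nabla_{\D_\varphi(w)_{(2)}}$ applied to $ax \in \Mod$, the two expressions agree iff
\[
	w(\varphi(a)) \;=\; \D_\varphi(w)_{(1)} \cdot \varphi\bigl(\D_\varphi(w)_{(2)}(a)\bigr) \qquad \text{in } \ZLB,
\]
which is exactly the content of \Cref{differential_char} read through the canonical isomorphism $\varphi_*\ZLA \cong \ZLB$. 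This is where the specific construction of the differential $\D_\varphi$ enters essentially; a generic $\ZLB$-linear map $\D_\ZLB \to \varphi_* \D_\ZLA$ would not yield a well-defined transported connection.

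Once well-definedness is settled, the remaining verifications are routine. The $k$-bilinearity of $(\varphi_*\nabla)$ follows from the $k$-linearity of each building block; the $\ZLB$-linearity in the first slot follows from the $\ZLB$-linearity of $\D_\varphi$ combined with $k$-linearity of $w \mapsto w(b)$; and the Leibniz rule in the second slot, $(\varphi_*\nabla)_w(b'(b \otimes x)) = w(b')(b \otimes x) + b'(\varphi_*\nabla)_w(b \otimes x)$, is visible by factoring $b'$ out of the formula and applying the Leibniz rule for $w$. Uniqueness is forced: any candidate connection satisfying~\eqref{transported_connection} must, by the Leibniz rule, agree with our formula on every $b \otimes x$, and such elementary tensors generate $\varphi_*\Mod$ as an abelian group.
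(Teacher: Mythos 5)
Your proposal is correct and follows essentially the same route as the paper's proof: both extend~\eqref{transported_connection} to general elementary tensors $b \otimes x$ via the formula forced by the Leibniz rule, reduce well-definedness over the balancing relation $b\varphi(a)\otimes x = b \otimes ax$ to exactly the identity of \Cref{differential_char}, and obtain uniqueness because the extended formula is a necessary consequence of the Leibniz rule. Your write-up is in fact slightly more explicit than the paper's about where \Cref{differential_char} enters.
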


\begin{proof}
	We consider the more general formula
	\begin{equation}
		\label{transported_connection2}
		(\varphi_* \nabla)_w (b \otimes x) = b \D_{\varphi}(w)_{(1)} \otimes \nabla_{\D_{\varphi}(w)_{(2)}}(x) + w(b) \otimes x,
	\end{equation}
	of which~\eqref{transported_connection} is the special case $b = 1_\ZLB$.
	Evaluating it on an element of the form $\varphi(a) b \in \ZLB$ in place of $b$ produces the same expression multiplied by $\varphi(a)$ and plus an extra term given by $w(\varphi(a)) b \otimes x$.
	Plugging in $a x$ in place of $x$ similarly produces the extra term
	\[
		b \D_{\varphi}(w)_{(1)} \otimes \D_{\varphi}(w)_{(2)}(a) x = w(\varphi(a)) b \otimes x,
	\]
	where the equality holds by \Cref{differential_char}.
	Hence $\varphi_* \nabla$ is well-defined in the second argument $\varphi_* \Mod = \ZLB \otimes_{\ZLA} \Mod$.

	To see that $\varphi_* \nabla$ is a connection, we need to check that it is $\ZLB$-linear in the first argument and satisfies the Leibniz rule in the second.
	The $\ZLB$-linearity in the first argument is immediate.
	The Leibniz rule in the second argument with respect to multiplication by scalars from $\ZLB$ follows by direct calculation as in the well-definedness argument, with $\varphi(a)$ replaced by a generic element of $\ZLB$.

	The uniqueness claim is clear as~\eqref{transported_connection2} is a necessary consequence of~\eqref{transported_connection} by the Leibniz rule in the second argument.
\end{proof}

It is know that \Cref{connection_transport} recovers the standard notion of pullback connection in the manifold context.
For example in the picture of \Cref{connection_form}, where a connections is $1$-form-valued map, a version of our \Cref{connection_transport} specialized to the manifold case is given e.g.~in \cite[Lemma~17.10]{MT}.
Nevertheless, we briefly sketch the correspondence to a point-based description as follows.

\begin{ex}
	\label{pullback_connection}
	For a smooth map $f : M \to N$ between smooth manifolds, and a vector bundle with connection $\nabla$ on $N$, it is well-known\footnote{It seems difficult to find a pedagogical exposition of this construction including~\eqref{pb_conn_formula} in published literature, but the reader will have no difficulty reading about it on the web, e.g.~at \href{https://mathoverflow.net/questions/49272/pull-back-connection}{mathoverflow.net/questions/49272/pull-back-connection}.} that there is an induced connection $f^* \nabla$ on the pullback bundle uniquely determined by
	\begin{equation}
		\label{pb_conn_formula}
		(f^* \nabla)_w (f^* x) = f^* (\nabla_{f_* w} x),
	\end{equation}
	where $x$ is any section of the original bundle on $N$ and $w$ is any tangent vector in $M$.
	This can be seen to be the pointwise version of~\eqref{transported_connection}, based on the isomorphism between the extension of scalars module and the sections of the pullback bundle.

\end{ex}

\subsection{The category of algebraifolds}

If $\varphi : \ZLA \to \ZLB$ and $\psi : \ZLB \to \ZLC$ are algebraifold homomorphisms, then the composition $\psi \varphi : \ZLB \to \ZLC$ is again an algebraifold homomorphism thanks to the obvious identity $(\psi \varphi)^* \Omega_{\ZLC} = \psi^* (\varphi^* \Omega_{\ZLC})$.

As usual when considering algebraic formulations of geometric concepts, the direction of the arrows reverses when thinking of the algebraic structures as geometric entities.
It seems useful to have this reversal reflected in the terminology.

\begin{defn}
	For $k$-algebraifolds $\ZLA$ and $\ZLB$, an \newterm{algebraifold map} $\varphi^\op : \ZLB \rightsquigarrow \ZLA$ is the formal dual of an algebraifold homomorphism $\varphi : \ZLA \to \ZLB$.
\end{defn}

Of course, algebraifold maps compose in the opposite direction to algebraifold homomorphisms, and we thus obtain the following category.

\begin{defn}
	Given a commutative ring $k$, the \newterm{category of $k$-algebraifolds} $\Afd{k}$ has:
	\begin{itemize}
		\item $k$-algebraifolds in standard form as objects,
		\item Algebraifold maps as morphisms.
	\end{itemize} 
\end{defn}

Mapping a connected smooth manifold to its $\R$-algebra of smooth functions and a smooth map $M \to N$ to the induced $\R$-algebra homomorphism $C^\infty(N) \to C^\infty(M)$ defines a functor
\beq
	\label{man_to_afd}
	\Man \longrightarrow \Afd{\R}.
\eeq
In fact, this functor is fully faithful, meaning that it establishes a bijection between smooth maps $M \to N$ and $\R$-algebra homomorphisms $C^\infty(N) \to C^\infty(M)$.\footnote{As we learned from Eugene Lerman, this result goes back to the 1952 thesis of Pursell~\cite[Chapter~8]{pursell}, which already contains its essential ingredients. We refer to~\cite[Corollary~35.10]{KMS} for a textbook account. It is also worth noting that every mere ring homomorphism $C^\infty(N) \to C^\infty(M)$ is automatically $\R$-linear, as one can see e.g.~by first showing that it automatically preserves the pointwise order on functions, so that $R$-linearity follows by $\Q$-linearity.}
The framework of \emph{stuff, structure and property}~\cite{BS} allows us to phrase this rigorously as follows.

\begin{slog}
	A smooth manifold is an $\R$-algebraifold with extra property (but no extra structure or stuff).
\end{slog}

It then becomes an interesting question how to recognize those $\R$-algebraifold $\ZLA$ that correspond to manifolds, i.e.~for which there is an algebra isomorphism $\ZLA \cong C^\infty(M)$ for some smooth manifold $M$.
One such characterization can be obtained by combining the results of~\cite{KKM} and~\cite{MV}\footnote{We thank Igor Khavkine for pointing this out to us.}; we do not spell this out here as doing so would require introducing the language of $C^\infty$-rings first.
As far as we are aware, a more explicit characterization phrased in terms of $\R$-algebra structure only is not known.

\subsection{The problem of products}

Some of the most important constructions of manifolds can be expressed nicely in terms of universal properties in the category of manifolds $\Man$, by which we mean the category with connected smooth manifolds as objects and smooth maps as morphisms.
For example, the product $M \times N$ of manifolds $M$ and $N$ is the categorical product in $\Man$.
How does this work out for algebraifolds?

\begin{prob}
	\label{prods}
	\begin{enumerate}
		\item\label{afd_prods} For a given commutative ring $k$, does the category $\Afd{k}$ have products?
		\item If $\Afd{\R}$ has products, does the functor~\eqref{man_to_afd} map products of manifolds to products of algebraifolds?
	\end{enumerate}
\end{prob}

For~\ref{afd_prods}, we know that products do exist at least in certain cases.
Indeed one obvious candidate for a product of $k$-algebraifolds $\ZLA$ and $\ZLB$ is the tensor product $\ZLA \otimes_k \ZLB$, where $\otimes_k$ denotes the usual tensor product (or equivalently coproduct) of commutative $k$-algebras.
We do not know under which conditions this is an algebraifold again in general, but here is one case in which it is true.

\begin{prop}
	Let $k$ be a field, and suppose that $\ZLA$ and $\ZLB$ are algebraifolds with one of them finitely generated as a $k$-algebra.
	Then $\ZLA \otimes_k \ZLB$ is an algebraifold again, and it is the product of $\ZLA$ and $\ZLB$ in $\Afd{k}$.
\end{prop}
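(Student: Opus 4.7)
Approach. The tensor product $C := \ZLA \otimes_k \ZLB$ is the coproduct in the category of commutative $k$-algebras, so showing it is the product in $\Afd{k}$ amounts to: (i) verifying $C$ is a $k$-algebraifold in standard form, and (ii) lifting the algebra-level coproduct property to algebraifold homomorphisms.

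Phase (i). Choose dual bases $(u_i, a_i)_{i=1}^n$ for $\ZLA$ and $(v_j, b_j)_{j=1}^m$ for $\ZLB$ as in \Cref{dual_bases_algebraifold}. I would extend each $u_i$ to a $k$-derivation $\tilde u_i \in \D_C$ by $\tilde u_i(a \otimes b) := u_i(a) \otimes b$ (well-defined by the universal property of $\otimes_k$ and a direct Leibniz check), define $\tilde v_j$ analogously, and set $\tilde a_i := a_i \otimes 1$ and $\tilde b_j := 1 \otimes b_j$. The central claim is the dual basis identity
\[
D \;=\; \sum_i D(\tilde a_i)\, \tilde u_i \;+\; \sum_j D(\tilde b_j)\, \tilde v_j \qquad \text{for every } D \in \D_C,
\]
which directly exhibits $\D_C$ as finitely generated projective of rank at most $n + m$. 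Evaluating both sides on $a \otimes b$ and applying $D(a \otimes b) = D(a \otimes 1)(1 \otimes b) + (a \otimes 1) D(1 \otimes b)$ reduces this to the two symmetric statements that every $k$-derivation $D_A \colon \ZLA \to C$ satisfies $D_A(a) = \sum_i (u_i(a) \otimes 1)\, D_A(a_i)$, and likewise on the $\ZLB$ side. Since $k$ is a field, $\ZLB$ admits a $k$-basis $(d_\alpha)$, so $C$ is a free $\ZLA$-module with basis $(1 \otimes d_\alpha)$; the expansion $D_A(a) = \sum_\alpha D_\alpha(a) \otimes d_\alpha$ shows that each component $D_\alpha$ is itself a $k$-derivation in $\D_\ZLA$, and the original dual basis identity $D_\alpha = \sum_i D_\alpha(a_i)\, u_i$ then gives the claim componentwise. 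Standard form of $C$ falls out by the same decomposition: if $\tilde u_i(c) = 0 = \tilde v_j(c)$ for all $i, j$, componentwise standard form of $\ZLA$ and $\ZLB$ forces $c \in k$.

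Phase (ii). The inclusions $\iota_\ZLA \colon \ZLA \to C$ and $\iota_\ZLB \colon \ZLB \to C$ are algebraifold homomorphisms via the dual of the restriction-of-derivations map, giving $\Omega_{\iota_\ZLA}(da) = d(a \otimes 1)$ and similarly for $\iota_\ZLB$. Given an algebraifold $\ZLC$ and algebraifold homomorphisms $\varphi \colon \ZLA \to \ZLC$, $\psi \colon \ZLB \to \ZLC$, the algebra coproduct produces a unique $k$-algebra homomorphism $\chi \colon C \to \ZLC$ with $\chi \circ \iota_\ZLA = \varphi$ and $\chi \circ \iota_\ZLB = \psi$. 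To check that $\chi$ is an algebraifold homomorphism, I would mimic the formula of \Cref{lem_explicit_pullback} and define
\[
\Omega_\chi(\xi) \;:=\; \sum_i \chi(\xi(\tilde u_i))\, d\varphi(a_i) \;+\; \sum_j \chi(\xi(\tilde v_j))\, d\psi(b_j),
\]
which is manifestly $C$-linear in $\xi$. The defining identity $\Omega_\chi(dc) = d\chi(c)$ for $c \in C$ reduces via Leibniz to the two generating cases $c = a \otimes 1$ and $c = 1 \otimes b$, where evaluation on $\tilde u_i$ and $\tilde v_j$ combined with the explicit pullback formulas for $\Omega_\varphi$ and $\Omega_\psi$ closes the calculation; uniqueness of $\Omega_\chi$ is automatic since $\Omega_C$ is generated by differentials of simple tensors.

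Main obstacle. The real subtlety is the componentwise dual basis identity for derivations $\ZLA \to C$: the universal property of $\Omega_\ZLA$ from \Cref{kahler_algebraifold} is stated only for fgp target modules, and $C$ is typically not fgp over $\ZLA$, so I cannot apply it directly. The rescue is that $C$ is nevertheless $\ZLA$-free when $k$ is a field, which reduces the identity to the known one on $\D_\ZLA$ componentwise. Once this is in hand, Phase (ii) is largely bookkeeping via \Cref{lem_explicit_pullback}, with the finite-generation hypothesis on one of $\ZLA, \ZLB$ entering, if at all, only at the level of standard algebraic housekeeping rather than the core of the argument.
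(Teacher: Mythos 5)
Your proof is correct, but it takes a genuinely different route from the paper. The paper's argument is short and leans on an external result: it cites a theorem of Azam for the direct sum decomposition $\D_{\ZLA \otimes_k \ZLB} \cong (\D_{\ZLA} \otimes_k \ZLB) \oplus (\ZLA \otimes_k \D_{\ZLB})$ --- this citation is precisely where the finite-generation hypothesis enters --- and then dualizes that decomposition to get the coproduct property. You instead build explicit dual bases for $\D_{\ZLA \otimes_k \ZLB}$ out of those of the factors, and your key lemma (every $k$-derivation $\ZLA \to \ZLA \otimes_k \ZLB$ satisfies the dual basis identity inherited from $\D_{\ZLA}$) is proved by decomposing the derivation componentwise over an $\ZLA$-basis of $\ZLA \otimes_k \ZLB$, which exists because $k$ is a field; the finite sums and the $\ZLA$-linearity of the projections make this reduction airtight, and \Cref{fgp_dual}\ref{dual_basis_lemma} then gives fgp-ness directly. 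What each approach buys: the paper's is two lines modulo the reference; yours is self-contained, makes the generating derivations and $1$-forms of the product completely explicit (which feeds directly into your Phase (ii) verification via \Cref{lem_explicit_pullback}), and --- notably --- appears not to use the finite-generation hypothesis at all. That is worth flagging rather than burying in the final sentence: if your componentwise argument is right (I believe it is), you have proved a slightly stronger statement, the point being that the fgp hypothesis on $\D_{\ZLA}$ substitutes for the finite generation that Azam's theorem needs for general algebras. You also explicitly verify that $\ZLA \otimes_k \ZLB$ is in standard form, which the paper's proof leaves implicit even though it is needed for the tensor product to be an object of $\Afd{k}$; your sketch there is terse (one still has to argue $(\ZLA \otimes 1) \cap (1 \otimes \ZLB) = k$ via bases containing $1$) but correct.
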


\begin{proof}
	Under the present assumptions, there is a direct sum decomposition~\cite[Theorem~2.8]{azam}
	\begin{equation}
		\label{dsum}
		\D_{\ZLA \otimes_k \ZLB} \cong \left( \D_{\ZLA} \otimes_k \ZLB \right) \oplus \left( \ZLA \otimes_k \D_{\ZLB} \right),
	\end{equation}
	where both summands are $\ZLA \otimes_k \ZLB$-modules in the obvious way.
	The first claim now follows as both summands on the right-hand side are fgp over $\ZLA \otimes_k \ZLB$.

	For the universal property, we show that $\ZLA \otimes_k \ZLB$ is the coproduct of $\ZLA$ and $\ZLB$ in the category of $k$-algebraifolds and algebraifold homomorphisms.
	To see this, note first that the isomorphism~\eqref{dsum} dualizes to\footnote{To see that $\D_\ZLA \otimes_k \ZLB$ dualizes to $\Omega_\ZLA \otimes_k \ZLB$, apply~\Cref{extension_dual} with respect to the canonical homomorphism $\ZLA \to \ZLA \otimes_k \ZLB$.}
	\[
		\Omega_{\ZLA \otimes_k \ZLB} \cong \left( \Omega_{\ZLA} \otimes_k \ZLB \right) \oplus \left( \ZLA \otimes_k \Omega_{\ZLB} \right),
	\]
	Together with
	\begin{equation}
		\label{differential_sum}
		d(a \otimes b) = (da \otimes b) \oplus (a \otimes db),
	\end{equation}
	it follows that the usual homomorphisms $\ZLA \to \ZLA \otimes_k \ZLB$ and $\ZLB \to \ZLA \otimes_k \ZLB$ are algebraifold homomorphisms, since the corresponding action on $1$-forms exists.
	To show the universal property, it remains to be proven that if $\varphi : \ZLA \to \ZLC$ and $\psi : \ZLB \to \ZLC$ are algebraifold homomorphisms, then so is the induced $k$-algebra homomorphism $\ZLA \otimes_k \ZLB \to \ZLC$.
	But this is also clear by~\eqref{differential_sum}.
\end{proof}

In general, it is conceivable that products in $\Afd{k}$ exist and can be computed from the algebraic tensor product in a universal way.
This would follow directly from a positive answer to the following, where $\Alg{k}$ denotes the category of commutative $k$-algebras and algebra homomorphisms.

\begin{qstn}
	Does the inclusion functor $\Afd{k}^\op \hookrightarrow \Alg{k}$ have a left adjoint?
\end{qstn}

If such an adjoint exists, then the product of two algebraifolds $\ZLA$ and $\ZLB$ can be computed as the left adjoint applied to the algebraic tensor product $\ZLA \otimes_k \ZLB$, since this is the coproduct in $\Alg{k}$ and left adjoint functors preserve colimits.

\section{Formal lines and geodesics}
\label{geodesics}

The goal of this section is to show that even the concept of geodesic has a sensible algebraic formulation.
This is an important development for algebraic approaches to differential geometry, given that the standard definition is point-based, which makes the translation into algebraic language less obvious than that of tensor calculus and connections.

\subsection{Formal lines}

Since geodesics are certain curves in a manifold, generalizing them to our setting first of all requires an algebraization of the notion of curve, to be defined as an algebraifold map from a suitable kind of ``line'' to another algebraifold.
The relevant notion of line is as follows, where $k$ is still any commutative ring.

\begin{defn}
	\label{formal_line}
	A $k$-algebraifold $\ZLL$ in standard form is a \newterm{formal line} if the following hold:
	\begin{enumerate}
		\item $\D_\ZLL$, or equivalently $\Omega_\ZLL$, is a free $\ZLL$-module of rank one.
		\item\label{formal_line_surj}
			Some $\partial \in \D_\ZLL$ is surjective as a map $\ZLL \to \ZLL$.
	\end{enumerate}
\end{defn}

\begin{ex}
	\label{poly_ring_formal_line}
	For any commutative ring $k$ with $\Q \subseteq k$, the $k$-algebraifold $k[t]$ is a formal line.
	In particular, with $\partial : k[t] \to k[t]$ the usual derivative map, condition~\ref{formal_line_surj} holds as every polynomial is the derivative of its antiderivative.

	On the other hand, if $k$ is a field of characteristic $\ell > 0$, then $k[t]$ is not a formal line, not even over its ring of constants (\Cref{poly_ring2}), since for example the monomial $x^{\ell-1}$ is not a derivative of any other polynomial, and taking $\partial$ to be any derivation other than standard differentiation does not help.
	In fact, it is unclear to us whether a formal line over a field of positive characteristic exists at all.
\end{ex}

\begin{ex}
	\label{smooth_formal_line}
	Over $k = \R$, the algebraifold $C^\infty(\R)$ is a formal line, where $\partial$ can be taken to be any vector field that does not vanish anywhere.
\end{ex}

\begin{rem}
	$C^\infty(S^1)$ is not a formal line over $\R$. 
	The module of derivations $\D_{C^\infty(S^1)}$ is generated by any vector field that does not vanish anywhere, and up to automorphisms of the algebra, this can be taken to be the standard vector field $\partial_\theta$ on $S^1$ with $\theta$ being the angle variable.
	This indeed makes $\D_{C^\infty(S^1)}$ free of rank one, but~\ref{formal_line_surj} fails: only those smooth functions which have zero integral with respect to $d \theta$ can appear as a derivative.
\end{rem}

So intuitively, condition~\ref{formal_line_surj} has a topological flavour along the lines of vanishing first de Rham cohomology.\footnote{This can possibly be made into a general precise statement based on a suitable definition of de Rham cohomology for algebraifolds.}

\begin{lem}
	\label{formal_line_antiderivative}
	\begin{enumerate}
		\item There is $t \in \ZLL$ such that $\partial t = 1$.
		\item For every $a \in \ZLL$, there is $\int a \in \ZLL$ with
			\[
				\partial \int a = a,
			\]
			and this $\int a$ is unique up to a constant in $k$.
	\end{enumerate}
\end{lem}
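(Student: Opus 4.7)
Proof plan. Part (i) is immediate from the definition: since $\partial : \ZLL \to \ZLL$ is surjective, there exists some $t \in \ZLL$ with $\partial t = 1$. Existence in part (ii) is equally immediate, since surjectivity of $\partial$ means that every $a \in \ZLL$ has some preimage, which we denote $\int a$. All the content of the lemma therefore lies in the uniqueness claim in part (ii), for which the goal is to show that $\partial b = 0$ implies $b \in k$.

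The key intermediate observation that I would establish first is that $\partial$ is in fact a \emph{generator} of the free rank-one $\ZLL$-module $\D_{\ZLL}$, not merely a surjective derivation. Fix any generator $u \in \D_{\ZLL}$ and write $\partial = f_0 \, u$ for some $f_0 \in \ZLL$. Applying surjectivity of $\partial$ to the element $1 \in \ZLL$ produces some $c \in \ZLL$ with $f_0 \, u(c) = 1$, which exhibits $f_0$ as a unit in $\ZLL$ with inverse $u(c)$. Consequently $u = f_0^{-1} \partial$, so every derivation $D \in \D_{\ZLL}$ can be written in the form $D = g \, \partial$ for some $g \in \ZLL$.

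With this in hand, the uniqueness claim follows quickly. Suppose $b, b' \in \ZLL$ both satisfy $\partial b = \partial b' = a$, so that $\partial(b - b') = 0$. For any $D \in \D_{\ZLL}$, writing $D = g \partial$ as above yields
\[
    D(b - b') = g \, \partial(b - b') = 0.
\]
Since $\ZLL$ is in standard form, the ring of constants coincides with $k$, and therefore $b - b' \in k$, which is exactly the asserted uniqueness up to a constant.

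The only real subtlety here is the step promoting $\partial$ from a surjective derivation to a generator of $\D_{\ZLL}$; without it one could not reduce the condition ``$Db = 0$ for all $D \in \D_{\ZLL}$'' to the single equation $\partial b = 0$, and the standard-form hypothesis would be inapplicable. Once that small argument is in place, the rest is a direct unpacking of \Cref{formal_line}.
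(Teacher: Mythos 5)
Your proof is correct and follows essentially the same route as the paper: reduce uniqueness to showing that $\partial b = 0$ forces $b$ into the ring of constants, which is $k$ by the standard-form hypothesis. The only difference is that you supply an explicit argument (via the unit $f_0$) for the claim that a surjective $\partial$ generates the free rank-one module $\D_{\ZLL}$, a step the paper asserts without proof; this is a worthwhile addition but not a different approach.
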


\begin{proof}
	\begin{enumerate}
		\item This is clear by \Cref{formal_line}\ref{formal_line_surj}.
		\item The existence holds again by~\ref{formal_line_surj}.
			For the uniqueness, it is enough to note that if $\partial b = 0$ for $b \in \ZLL$, then $b$ belongs to the ring of constants.
			This is because every $\partial$ generates $\D_\ZLL$, so that $\partial b = 0$ indeed implies $v(b) = 0$ for all $v \in \D_\ZLL$.
			\qedhere
	\end{enumerate}
\end{proof}

As the antiderivative of $1_\ZLL$, such an element $t$ plays the role of the variable $t$ in \Cref{poly_ring_formal_line} and the identity function in \Cref{smooth_formal_line}.

\begin{rem}
	If $\Q \subseteq k$, then every formal line $\ZLL$ contains $k[t]$ as a subalgebra, where again $t = \int 1$.
	Indeed the induced algebra homomorphism $k[t] \to \ZLL$ is injective, as can be seen by differentiating a given polynomial degree many times.
\end{rem}

Given any $k$-algebraifold $\ZLA$ in addition to a formal line, we can now consider a curve in $\ZLA$ to be an algebraifold map $\ZLL \rightsquigarrow \ZLA$, or equivalently an algebraifold homomorphism $\ZLA \to \ZLL$.
As per the following example, the choice of formal line determines which kind of curves are considered.

\begin{ex}
	For $\ZLA = \R[x_1, \ldots, x_n]$, the algebraifold maps $\ZLA \rightsquigarrow \R[t]$ correspond to the polynomial maps $\R \to \R^n$.	
	The algebraifold maps $\ZLA \rightsquigarrow C^\infty(\R)$ correspond to the \emph{smooth} maps $\R \to \R^n$.

	Both of these claims are straightforward to see by using that $\R[x_1, \ldots, x_n]$ is the free $\R$-algebra on $n$ generators, so that the $\R$-algebra homomorphisms $\R[x_1, \ldots, x_n] \to \R$ are in bijection with the $n$-tuples of elements of $\R$, and the extra condition of \Cref{algebraifold_homomorphism} is clearly satisfied.
\end{ex}

\begin{ex}
	If $M$ is a smooth manifold, then the algebraifold maps $C^\infty(M) \rightsquigarrow C^\infty(\R)$ correspond to the smooth curves $\R \to M$ by the fact that the functor $\Man \to \Afd{\R}$ is fully faithful~\eqref{man_to_afd}.

	On the other hand, the only algebraifold maps $\R[t] \rightsquigarrow C^\infty(M)$ are those that factor across $\R$, meaning that they correspond to \emph{constant} curves in $M$.
	This is because the only $\R$-algebra homomorphisms $C^\infty(M) \to \R[t]$ are those that factor across a point evaluation homomorphism $C^\infty(M) \to \R$.\footnote{To see this, compose with the embedding $\R[t] \hookrightarrow C^\infty(\R)$ to note that every homomorphism $C^\infty(M) \to \R[t]$ is given by restriction along a smooth map $\R \to M$, and such a map must be constant since otherwise there will be smooth functions on $M$ that do not restrict to polynomial functions on $\R$.}
	We think of this as indicating that there is no notion of ``polynomial curve'' in $M$.
\end{ex}

Let us now turn to geodesics.
In the manifold setting, a geodesic is a smooth curve $\gamma : \R \to M$ whose tangent vectors are covariant constant along the curve.
In terms of the notion of pullback connection from~\Cref{pullback_connection}, this amounts to saying that the induced connection on the bundle $\gamma^* TM$ should be such that the pulled back tangent vectors of $\gamma$ are covariant constant with respect to the pulled back connection.
This formulation generalizes to the following notion of geodesic associated to any formal line.

\begin{defn}
	\label{geodesic}
	Let $\ZLA$ be any $k$-algebraifold equipped with a connection $\nabla$ and $\ZLL$ a formal line over $k$.
	Then an \newterm{$\ZLL$-geodesic in $\ZLA$} is an algebraifold map $\varphi^\op : \ZLL \rightsquigarrow \ZLA$ such that
	\begin{equation}
		\label{geodesic_equation}
		(\varphi_* \nabla)_\partial (\D_\varphi \partial) = 0.
	\end{equation}
\end{defn}

Here, we use the connection $\varphi_* \nabla$ as defined by \Cref{connection_transport}, which is the connection $\nabla$ transported along $\varphi$ to the formal line $\ZLL$.

\begin{ex}
	For a smooth manifold $M$ and smooth curve $\gamma \colon \R \to M$, the associated algebraifold homomorphism
	\[
		C^\infty(\gamma) \: : \: C^\infty(M) \to C^\infty(\R)
	\]
	satisfies our \eqref{geodesic_equation} if and only if $\gamma$ is a geodesic in the standard sense.
	Indeed $\D_{C^\infty(\gamma)} \partial$ is precisely the collection of velocity vectors of $\gamma$, considered as a section of the pullback bundle $\gamma^* TM$, and by~\eqref{pb_conn_formula} our equation states that this vector field should be covariant constant with respect to the pullback connection on $\R$.
\end{ex}

\begin{rem}
	Based on the differential of an algebraifold map in the form~\eqref{differential}, we expect that a general definition of \emph{harmonic map} between algebraifolds can be given, but the details remain to be worked out.
\end{rem}

\section{Algebraifolds in general relativity}
\label{GR}

\subsection{The Einstein field equation}

Based on the notions of metric, connection and curvature from \Cref{sec:diff_geom}, it is straightforward to also write down the Einstein field equation of general relativity in its usual form, now amounting to an equation for an unknown metric on a given algebraifold $\ZLA$ in standard form. In terms of the Ricci tensor and scalar considered at~\eqref{ricci}, the field equations take the form
\beq
	\label{efe}
	\Ric - \frac{1}{2} S g + \Lambda g = \kappa T,
\eeq
where now both the gravitational constant $\kappa$ and the cosmological constant $\Lambda$ are assumed to be constants, which in our context means fixed invertible elements of $k$, and $T$ is the stress-energy tensor, also a $(0,2)$-tensor as usual, for which nay of the standard forms in terms of matter fields can be assumed.\footnote{At least for bosonic matter, considering fermionic matter will first require a generalization of spinor fields to the algebraifold setting.}

Following Geroch's idea~\cite{geroch}, we may now call \newterm{Einstein $k$-algebra}, or just \newterm{Einstein algebra}, any $k$-algebraifold together with a metric which satisfies the Einstein field equation.
The remainder of the paper will sketch some examples of Einstein algebras that are not manifolds, illustrating the potential relevance of our framework to general relativity.
An interesting example that we will not discuss further is given by general relativity with generalized metrics in the sense of \Cref{colombeau}, which allows for certain singularities~\cite{GKOS}.

\subsection{Function fields}

Physicists, especially those who do generally not work with full mathematical rigour, often like to apply algebraic operations without worrying about whether this makes sense in the underlying mathematical structure. 
In particular, they may take the inverse $f^{-1}$ of a function $f$ without ensuring that $f$ does not have any zeroes.
The problem is that this inverse $f^{-1}$ does not exist as a smooth function on a manifold $M$ in case that $f$ has a zero.
Even worse, $C^\infty(M)$ has lots of zero divisors, so that this problem cannot be rectified by embedding into a field.
Hence a mathematically inclined physicist will object that considering $f^{-1}$ does not make sense, unless it is first ensured that $f$ does not have a zero.

However, in the framework of algebraifolds, forming $f^{-1}$ can make sense for all $f \neq 0$, since the algebraifold containing $f$ may even be a field.
This case seems particularly close to physicists' general intuition.
To see what the resulting algebraifolds can look like, let us consider a simple cosmological spacetime as an example.
In a textbook treatment like Wald's~\cite[Chapter~5]{wald}, this would usually be given by the smooth manifold $M \coloneqq \R_{>0} \times \R^3$ with coordinates denoted $(t,x,y,z)$ and a metric of the form
\begin{equation}
	\label{friedmann}
	g = dt \otimes dt - a^2 (dx \otimes dx + dy \otimes dy + dz \otimes dz),
\end{equation}
where $a \in C^\infty(M)$ is a function subject to certain differential equations.\footnote{Namely having vanishing derivative in the spatial directions $x,y,z$, and satisfying the \emph{Friedmann equations} in the $t$ direction.}
Assuming a flat universe containing pressureless matter and cosmological constant $\Lambda = 0$, solving these equations results in
\begin{equation}
	\label{friedmann_a}
	a = C t^{2/3}
\end{equation}
for an integration constant $C > 0$~\cite[Table~5.1]{wald}.
Throughout the following, we set $C = 1$ for simplicity.

To get an algebraifold that is also a field and still describes this spacetime, it is natural to consider the field of fractions of the $\R$-algebra
\begin{equation}
	\label{friedmann_algebra}
	\R[a,t,x,y,z] / (a^3 - t^2),
\end{equation}
which is an integral domain since the polynomial $a^3 - t^2$ is irreducible.
Defining $\ZLA$ to be the field of fractions of~\eqref{friedmann_algebra} indeed produces a finitely generated field extension of $k \coloneqq \R$, on which~\eqref{friedmann} defines a metric in our sense.\footnote{\label{cuspidal_curve} It may be worth noting that this field of fractions is isomorphic to the rational function field $\R(s,x,y,z)$ involving a new variable $s$. While this will be obvious to any algebraic geometer, it is challenging to find a reference which would show this explicitly in a way which also applies over $\R$, so let us give a sketch: there is a homomorphism
\[
	\R[a,t,x,y,z] \to \R[s,x,y,z] 
\]
given by $t \mapsto s^3$ and $a \mapsto s^2$ and sending the other variables to themselves.
Its kernel is exactly the principal ideal generated by $a^3 - t^2$, and this induces an isomorphism between $\R[a,t,x,y,z] / (a^3 - t^2)$ and the subalgebra $\R[s^2,s^3,x,y,z] \subseteq \R[s,x,y,z]$.
The proof is then completed by noting that $s = \frac{s^3}{s^2}$ belongs to the field of fractions of this subalgebra.
}

\begin{rem}
	\begin{enumerate}
		\item At the present stage, we do not want to claim that this algebraifold is an adequate mathematical model of the universe from the physical point of view.
			Whether this is the case---of course under the highly idealized assumptions that standardly lead to~\eqref{friedmann} and~\eqref{friedmann_a}---remains to be seen.

			For one thing, a physical model will need to have empirical content through making predictions about observations.
			One way to go in this direction could be to consider measurements of an observer moving on a geodesic (\Cref{geodesics}) in spacetime.
			Deriving predictions based on this may first require developing additional aspects of differential geometry first, such as the geodesic deviation equation.
		\item We have made no use of the fact that our ground field is $\R$.
			So at least from the purely mathematical perspective, we still have a perfectly valid Einstein algebra over \emph{any} field $k$ in place of $\R$, where the required separability holds by the argument of \Cref{cuspidal_curve}.
		\item The reader may wonder whether~\eqref{friedmann_algebra} itself would serve as an Einstein $\R$-algebra with respect to the metric~\eqref{friedmann}.
			Since the variety defined by the equation $a^3 - t^2 = 0$ is singular, the Zariski--Lipman conjecture (\Cref{algebraic_varieties}) suggests that this is not the case.
			Intuitively, the singularity here is a manifestation of the big bang singularity at $t = 0$.
			Going to the field of fractions in particular enforces the invertibility of $t$.
			This is the algebraic analogue of the fact that the big bang singularity is not part of the spacetime manifold in standard general relativity.
	\end{enumerate}
\end{rem}

\subsection{Parametrized spacetimes}

Also the construction of algebraifolds from smooth submersions (\Cref{parametric}) may be of some interest for general relativity.
The idea is that many solutions of the Einstein field equation have free parameters, and the resulting parameter space can be taken to be the base manifold $N$, while the thus parametrized spacetimes are the fibres of the submersion $f$.

For example, the \emph{Kerr metric} is a family of vacuum solutions parametrized by the mass $m$ and the angular momentum $j$ of a black hole.
The set of possible values of $m$ and $j$ is a subset of $\R^2$ which will be our parameter manifold $N$.
In natural units, this set is given by
\[
	N = \{ (m,j) \in \R^2 \mid m > 0, \; \lvert j \rvert < m^2 \},
\]
where we use strict inequalities mainly to ensure that $N$ is a manifold without boundary.
Since the topology of the Kerr spacetime is the same for all values of $m$ and $j$, we can define the smooth submersion $f : M \to N$ as the trivial bundle with the topology of Kerr spacetime on each fibre.
Then $C^\infty(M)$ becomes an algebraifold over $C^\infty(N)$.
Since a metric on this algebraifold in our sense is given by a smoothly varying family of metrics on the fibres by \Cref{tensors_examples}\Cref{parametric_tensors}, and the Kerr metric varies smoothly in $m$ and $j$, we indeed obtain a metric on $C^\infty(M)$ as an algebraifold over $C^\infty(N)$.

\begin{rem}
	\begin{enumerate}
		\item\label{metaphysics_comment} A bold metaphysical interpretation of this Einstein algebra might be that the total space $M$ is the \emph{actual} spacetime, but that observers living in $M$ will not notice this since different fibres do not interact.
			This is because the module of derivations $\D_{C^\infty(M)}$ only consists of vertical vector fields, which makes it is impossible for particles, field excitations or observers to ``leave'' their fibre.
		\item Provided that our parametrization construction can be generalized such that the base $N$ is allowed to be suitably infinite-dimensional, it is conceivable that one can even take it to be a suitably defined space of \emph{all} solutions to the Einstein field equation.
			In this case, the totality of all possible spacetimes consistent with general relativity would be \emph{one single} algebraifold.
			If this works, then it is natural to expect that this algebraifold would be a \emph{universal} solution to the Einstein field equations: a terminal object in a suitably defined category of Einstein algebras.
	\end{enumerate}
\end{rem}

\appendix
\section{Finitely generated projective modules}
\label{fgp}

Throughout this appendix, $\ZLA$ denotes an arbitrary commutative ring.
Let us begin by recalling some basic definitions.

\begin{defn}
	An $\ZLA$-module $\Mod$ is:
	\begin{enumerate}
		\item \newterm{finitely generated} if there is a finite subset $\mathcal{F} \subseteq \Mod$ such that every element of $\Mod$ can be written as a (finite) $\ZLA$-linear combination of elements of $\mathcal{F}$. 
		\item \newterm{projective} if there is an $\ZLA$-module $\Nod$ such that $\Mod \oplus \Nod$ is a free module.
	\end{enumerate}	
\end{defn}

Although this is not immediate from the definition, these two properties interact in such a nice way that their conjunction is especially important, and we thus abbreviate
\[
	\text{\newterm{fgp}} = \text{finitely generated projective}.
\]
The following alternative characterization explains why fgp modules are important, and we make frequent use of properties~\ref{dual_basis_lemma} and~\ref{dualizable_module}.

\begin{thm}
	\label{fgp_dual}
	For an $\ZLA$-module $\Mod$, the following are equivalent:
	\begin{enumerate}
		\item\label{is_fgp} $\Mod$ is fgp.
		\item\label{dual_basis_lemma} \newterm{Dual basis lemma:} There are elements $u_1, \dots, u_n \in \Mod$ and\footnote{Here, $\Mod^* \coloneqq \Modules{\ZLA}(\Mod, \ZLA)$ denotes the dual module.} $\xi_1, \ldots, \xi_n \in \Mod^*$ such that
			\beq
				\label{duality}
				\sum_{i=1}^n \xi_i(x) \, u_i = x
			\eeq
			for all $x \in \Mod$.
		\item\label{fgp_retract}
			There exist $n \in \N$ and module homomorphisms $u : \ZLA^n \to \Mod$ and $\xi : \Mod \to \ZLA^n$ such that $\xi u = \id_{\ZLA^n}$.
		\item\label{dualizable_module} $\Mod$ is a dualizable object in the symmetric monoidal category of $\ZLA$-modules: there is an object $\Mod^*$ together with $\ZLA$-linear map
			\beq
				\delta \: : \: \ZLA \longrightarrow \Mod \otimes_\ZLA \Mod^*, \qquad \varepsilon \: : \: \Mod^* \otimes_\ZLA \Mod \longrightarrow \ZLA
			\eeq
			such that the triangle identities
			\beq
				\label{triangle_ids}
				\begin{tikzcd}[sep=3.1pc]
					\Mod \ar[equals]{dr} \ar{r}{\delta \,\otimes\, \id_\Mod}	& \Mod \otimes_\ZLA \Mod^* \otimes_\ZLA \Mod \ar{d}{\id_\Mod \,\otimes\, \varepsilon}	& \Mod^* \ar[equals]{dr} \ar{r}{\id_{\Mod^*} \,\otimes\, \delta}	& \Mod^* \otimes_\ZLA \Mod \otimes_\ZLA \Mod^*  \ar{d}{\varepsilon \,\otimes\, \id_{\Mod^*}}	\\
												& \Mod	&	& \Mod^*
				\end{tikzcd}
			\eeq
			hold.
	\end{enumerate}
\end{thm}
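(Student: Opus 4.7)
My plan is to run the cycle (i) $\Rightarrow$ (ii) $\Rightarrow$ (iii) $\Rightarrow$ (iv) $\Rightarrow$ (i). First a clarifying remark that I want to flag rather than paper over: condition (iii) as printed asks for $\xi u = \id_{\ZLA^n}$, which is satisfied trivially for \emph{any} $\Mod$ by taking $n = 0$ and $u, \xi$ both zero, so cannot be equivalent to fgp. The intended condition, matching (ii) and the standard characterisation of projectivity as ``direct summand of a finite free module,'' is $u\xi = \id_\Mod$, exhibiting $\Mod$ as a retract of $\ZLA^n$. I will treat (iii) under that corrected reading.

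For (i) $\Rightarrow$ (ii), write $\Mod$ as a summand of a free module $\ZLA^n$ via a split surjection $p \colon \ZLA^n \epic \Mod$ with section $\iota$, and set $u_i := p(e_i)$ and $\xi_i := \pi_i \circ \iota$ for the standard basis and coordinate projections of $\ZLA^n$. Expanding $\iota(x) = \sum_i \xi_i(x) \, e_i$ in the basis and applying $p$ yields the dual basis identity. For (ii) $\Rightarrow$ (iii), the tuples $(u_i)$ and $(\xi_i)$ package into $u \colon \ZLA^n \to \Mod$, $e_i \mapsto u_i$, and $\xi \colon \Mod \to \ZLA^n$, $x \mapsto (\xi_i(x))_i$, and the dual basis equation is precisely $u\xi = \id_\Mod$. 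For (iii) $\Rightarrow$ (iv), I take $\Mod^* := \Modules{\ZLA}(\Mod, \ZLA)$ with $\varepsilon$ the evaluation pairing and set $\delta(1) := \sum_i u(e_i) \otimes (\pi_i \circ \xi) \in \Mod \otimes_\ZLA \Mod^*$. Both triangle identities, after unwinding $\varepsilon$ as evaluation, reduce to the single equation $\sum_i \xi_i(x) u_i = x$ (the second is this equation applied under an arbitrary $f \in \Mod^*$), which (ii) supplies.

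The step carrying real content is (iv) $\Rightarrow$ (i). The key observation is that any element of a module tensor product is by definition a \emph{finite} sum of elementary tensors, so $\delta(1)$ can be written as $\sum_{i=1}^n u_i \otimes f_i$ for some finite $n$, $u_i \in \Mod$ and $f_i \in \Mod^*$. Applying the first triangle identity at an arbitrary $x \in \Mod$ yields $\sum_i f_i(x) u_i = x$. This single identity does double duty: the $u_i$ span $\Mod$, and the packaging of (ii) $\Rightarrow$ (iii) run in reverse produces $u \colon \ZLA^n \to \Mod$ and $\xi \colon \Mod \to \ZLA^n$ with $u\xi = \id_\Mod$, exhibiting $\Mod$ as a direct summand of $\ZLA^n$ and hence projective. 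The only subtle point—which is also the main obstacle to writing a slicker proof—is precisely this appeal to the finite-sum definition of tensors: it is what automatically promotes the abstract-seeming dualizability condition to the concrete finiteness built into fgp, closing the cycle without any further work.
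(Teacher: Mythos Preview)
Your cycle is correct, and you are right that condition~(iii) as printed is a typo: with $u \colon \ZLA^n \to \Mod$ and $\xi \colon \Mod \to \ZLA^n$, the equation $\xi u = \id_{\ZLA^n}$ makes $\ZLA^n$ a retract of $\Mod$ (trivially true for $n=0$), whereas the intended $u\xi = \id_\Mod$ exhibits $\Mod$ as a retract of $\ZLA^n$. The paper's own proof is merely a list of references (Lam for (i)$\Leftrightarrow$(ii), then the remark that (iii) is a ``straightforward rephrasing'' of (ii), then citations for (iv)), so your self-contained argument is more informative than what the paper provides and follows the same standard route those references would.

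One small point of hygiene in your step (iv)$\Rightarrow$(i): as stated in the theorem, the object $\Mod^*$ in (iv) is an \emph{abstract} dualizing module, not assumed to be $\Modules{\ZLA}(\Mod,\ZLA)$, so writing ``$f_i(x)$'' for $f_i \in \Mod^*$ is not yet meaningful. What the first triangle identity actually gives is $\sum_i \varepsilon(f_i \otimes x)\, u_i = x$; setting $\xi_i \coloneqq \varepsilon(f_i \otimes -) \in \Modules{\ZLA}(\Mod,\ZLA)$ then yields the dual basis equation honestly. This is exactly the mechanism by which one shows, a posteriori, that the abstract $\Mod^*$ can be identified with the concrete dual (as the paper notes in the subsequent remark), but in the proof itself the order matters.
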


\begin{proof}
	See e.g.~\cite[Remark~2.11]{lam} for the equivalence of~\ref{is_fgp} and~\ref{dual_basis_lemma}.
	Condition~\ref{fgp_retract} is a straightforward rephrasing of~\ref{dual_basis_lemma}.
	Finally, see e.g.~\cite[Example~1.4]{DP} or~\cite[Example~3.2]{PS} for the equivalence with~\ref{dualizable_module}.
\end{proof}

\begin{rem}
	\label{fgp_rem}
	\begin{enumerate}
		\item Perhaps in contrast to what the phrasing ``dual basis lemma'' suggests, the elements $u_1, \ldots, u_n$ generate $\Mod$ but do not need to form a basis (since $\Mod$ does not even need to be free), and similarly the $\xi_1, \ldots, \xi_n$ generate $\Mod^*$ without necessarily forming a basis.
		\item Since property~\ref{dualizable_module} is invariant under exchanging $\Mod$ and $\Mod^*$, we know that $\Mod^*$ is automatically fgp as well, and in particular the dual basis lemma holds for it: we have 
			\begin{equation}
				\label{duality2}
				\sum_{i=1}^n \eta(u_i) \, \xi_i = \eta.
			\end{equation}
			for all $\eta \in \Mod^*$.
		\item The object $\Mod^*$ in~\ref{dualizable_module} not not be assumed to be (isomorphic to) the dual module $\Modules{\ZLA}(\Mod, \ZLA)$ a priori. 
			Indeed the proofs referenced above show that if $\Mod^*$ is \emph{any} $\ZLA$-module with $\delta$ and $\varepsilon$ satisfying the triangle identities, there is a canonical isomorphism $\Mod^* \cong \Modules{\ZLA}(\Mod, \ZLA)$ such that $\varepsilon$ turns into the standard evaluation map
			\begin{align}
			\begin{split}
				\label{ev_map}
				\Mod^* \otimes_\ZLA \Mod	& \longrightarrow \ZLA,	\\
				f \otimes x			& \longmapsto f(x).
			\end{split}
			\end{align}
			In other words, one can assume without loss of generality that $\Mod^* = \Modules{\ZLA}(\Mod, \ZLA)$ and that $\varepsilon$ is given by the evaluation map.
			This is also why $\delta$ is often called the \emph{coevaluation map}.
			With $\varepsilon$ given by~\eqref{ev_map}, one can show that it is given by
			\[
				\delta = \sum_{i=1}^n u_i \otimes \xi_i, 
			\]
			with $u_i \in \Mod$ and $\xi_i \in \Mod^*$ as in~\ref{dual_basis_lemma}.
		\item The homomorphisms $\delta$ and $\varepsilon$ induce a natural bijection
			\beq
				\label{fgp_hom}
				\begin{tikzcd}[row sep=4pt]
					\Modules{\ZLA}(\mathcal{R} \otimes \Mod, \mathcal{S}) \ar[draw=none,"\displaystyle{\cong}" description]{r}	& \Modules{\ZLA}(\mathcal{R}, \mathcal{S} \otimes \Mod^*)	\\
					f \ar[mapsto]{r}	& (f \otimes \id_{\Mod^*}) \circ (\id_{\mathcal{R}} \otimes \delta)	\\
					(\id_{\mathcal{S}} \otimes \varepsilon) \circ (g \otimes \id_{\Mod^*})			& g \ar[mapsto]{l}
				\end{tikzcd}
			\eeq
			for all $\ZLA$-modules $\mathcal{R}$ and $\mathcal{S}$.
			In terms of dual bases as above, the counterpart of $f : \mathcal{R} \otimes \Mod \to \mathcal{S}$ is given by
			\begin{align}
			\begin{split}
				\label{dual_basis_adjunct}
				\mathcal{R} & \longrightarrow \mathcal{S} \otimes \Mod^* \\
				r & \longmapsto \sum_{i=1}^n f(r \otimes u_i) \otimes \xi_i,
			\end{split}
			\end{align}
			while the counterpart of $g : \mathcal{R} \to \mathcal{S} \otimes \Mod^*$ is
			\begin{align}
			\begin{split}
				\label{dual_basis_adjunct_inv}
				\mathcal{R} \otimes \Mod & \longrightarrow \mathcal{S} \\
				r \otimes x & \longmapsto g_{(2)}(r)(x) \cdot g_{(1)}(r),
			\end{split}
			\end{align}
			where we use sumless Sweedler notation for $g$.
			The fact that these two constructions are each other's inverses is straightforward to prove from~\eqref{duality}.
		\item\label{string_diagrams} This bijection is most easily understood in terms of the \emph{graphical calculus} of symmetric monoidal categories~\cite{selinger}, in which the two maps are given by
			\[
				\tikzfig{cup_comp}
				\qquad\quad
				\tikzfig{cap_comp}
			\]
			and the triangle identities~\eqref{triangle_ids} take the very intuitive form
			\[
				\tikzfig{zigzag}
			\]
			which are often called the \emph{zig-zag identities}.
	\end{enumerate}
\end{rem}

Let us now explain the geometrical significance of fgp modules.
For a smooth manifold $M$, we write $\Vect{M}$ for the category of smooth real vector bundles over $M$, which is a symmetric monoidal category with respect to the usual tensor product of vector bundles.
For a ring $\ZLA$, we write $\fgpModcat{\ZLA}$ for the category of fgp $\ZLA$-modules, which is a symmetric monoidal category with respect to the usual algebraic tensor product of $\ZLA$-modules.
Then the following result is the \newterm{smooth Serre--Swan theorem}~\cite[Theorem~11.32]{nestruev}, extended to a symmetric monoidal equivalence of symmetric monoidal categories.

\begin{thm}
	\label{SSS}
	If $M$ is a smooth manifold with finitely many connected components, then the functor
	\[
		\Gamma^\infty \: : \: \Vect{M} \longrightarrow \fgpModcat{C^\infty(M)}
	\]
	mapping every vector bundle $\pi_E : E \to M$ to the $C^\infty(M)$-module $\Gamma^\infty(E)$ of smooth sections of $E$ is a symmetric monoidal equivalence
\end{thm}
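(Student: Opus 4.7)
The plan is to take the underlying equivalence of categories $\Gamma^\infty \colon \Vect{M} \to \fgpModcat{C^\infty(M)}$ as given by~\cite[Theorem~11.32]{nestruev} and to equip it with symmetric monoidal coherence data. The unit will be handled by the obvious identification $C^\infty(M) \cong \Gamma^\infty(M \times \R)$ of smooth functions with sections of the trivial line bundle. For the tensorator, I would define
\[
	\mu_{E,F} \colon \Gamma^\infty(E) \otimes_{C^\infty(M)} \Gamma^\infty(F) \longrightarrow \Gamma^\infty(E \otimes F), \qquad s \otimes t \longmapsto \bigl( p \mapsto s(p) \otimes t(p) \bigr),
\]
whose well-definedness over $C^\infty(M)$ and naturality in $E$ and $F$ are immediate from the fact that the tensor product of vector bundles is computed fibrewise.

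The principal obstacle is to prove that $\mu_{E,F}$ is an isomorphism for all $E$ and $F$. My strategy is a reduction to the trivial case. First, when $E = M \times \R^n$ is trivial of rank $n$, the source simplifies to $C^\infty(M)^n \otimes_{C^\infty(M)} \Gamma^\infty(F) \cong \Gamma^\infty(F)^{\oplus n}$ and the target to $\Gamma^\infty(F^{\oplus n}) \cong \Gamma^\infty(F)^{\oplus n}$, with $\mu$ giving the canonical identification. For a general smooth vector bundle $E$, the fact that $\Gamma^\infty(E)$ is fgp (the content of the unmonoidal Serre--Swan theorem) combined with the equivalence of categories produces a trivial bundle $M \times \R^n$ together with bundle maps $i \colon E \to M \times \R^n$ and $r \colon M \times \R^n \to E$ satisfying $r \circ i = \id_E$. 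Since both $\Gamma^\infty(-) \otimes_{C^\infty(M)} \Gamma^\infty(F)$ and $\Gamma^\infty(- \otimes F)$ are additive functors in $E$, naturality applied to $i$ and $r$ exhibits $\mu_{E,F}$ as a retract in the arrow category of the isomorphism $\mu_{M \times \R^n, F}$, and a routine diagram chase then produces an explicit inverse of the form $\Gamma^\infty(r \otimes \id_F) \circ \mu_{M \times \R^n, F}^{-1} \circ (\Gamma^\infty(i) \otimes \id_{\Gamma^\infty(F)})$.

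The remaining coherence conditions---compatibility of $\mu$ and the unit isomorphism with the associators, unitors, and symmetry of both symmetric monoidal structures---are expected to be routine. Since all structural isomorphisms involved on both sides are built from fibrewise linear algebra, evaluation at any point $p \in M$ reduces each coherence diagram to the corresponding diagram in the symmetric monoidal category of real vector spaces, which commutes. Combined with the underlying equivalence, this will yield the desired symmetric monoidal equivalence.
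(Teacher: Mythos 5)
Your proposal follows essentially the same route as the paper: cite Nestruev for the underlying equivalence, define the tensorator by pointwise tensoring of sections, and prove it is an isomorphism by reducing to trivial bundles via additivity, naturality, and the fact that every bundle is a direct summand of a trivial one. The only quibbles are that your explicit inverse formula does not typecheck as written --- it should read $(\Gamma^\infty(r) \otimes \id_{\Gamma^\infty(F)}) \circ \mu_{M \times \R^n, F}^{-1} \circ \Gamma^\infty(i \otimes \id_F)$, with the two functors swapped relative to your version --- and that the paper additionally spells out the passage from Nestruev's connected case to manifolds with finitely many components via a product-category decomposition, a step you leave implicit.
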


Note that this functor is indeed covariant: although equivalences of categories between geometrical objects and algebraic objects are often contravariant, this is not the case here, intuitively because vector bundles already are of an algebraic nature.

\begin{proof}
	That this functor indeed lands in fgp modules and is an equivalence of categories for connected $M$ is~\cite[Theorem~11.32]{nestruev} as cited above.
	The same statement for finitely many connected components is directly implied, as both categories can then be described as product categories: if $M$ has connected components $M_1, \ldots, M_n$, then each vector bundle decomposes uniquely into a direct sum of vector bundles supported on each component, and this implies that the category is naturally equivalence to a product category,
	\[
		\Vect{M} \cong \prod_{i=1}^n \Vect{M_i}.
	\]
	The same applies at the level of fgp modules,
	\[
		\fgpModcat{C^\infty(M)} \cong \prod_{i=1}^n \fgpModcat{C^\infty(M_i)}.
	\]
	Furthermore, the functor $\Gamma^\infty$ acts as the corresponding equivalence $\Vect{M_i} \cong \fgpModcat{C^\infty(M_i)}$ on each factor, and therefore is an equivalence itself.

	To show that $\Gamma^\infty$ is a symmetric monoidal equivalence, note that for any two vector bundles
	\[
		\pi_E : E \to M \quad \textrm{and} \quad \pi_F : F \to M,
	\]
	a pair of sections $\xi \in \Gamma^\infty(M)$ and $\eta \in \Gamma^\infty(F)$ induces a section $\xi \otimes \eta \in \Gamma^\infty(E \otimes F)$ by tensoring them pointwise. 
	This construction is an $\ZLA$-bilinear map, which induces a module homomorphism
	\begin{equation}
		\label{tensor_map}
		\Gamma^\infty(E) \otimes_{C^\infty(M)} \Gamma^\infty(F) \longrightarrow \Gamma^\infty(E \otimes F)
	\end{equation}
	natural in $E$ and $F$.
	In combination with the trivial isomorphism $\Gamma^\infty(M) \cong C^\infty(M)$, which amounts to $\Gamma^\infty$ preserving the monoidal unit, it is straightforward to verify the relevant coherences which show that $\Gamma^\infty$ is a lax symmetric monoidal functor.

	It remains to be proven that the structure maps~\eqref{tensor_map} are isomorphisms of $C^\infty(M)$-modules.
	This is~\cite[Theorem~7.5.5]{conlon}, but we sketch the proof here as representative of a standard argument involving fgp modules.
	The fact that~\eqref{tensor_map} is an isomorphism when $E$ or $F$ is the trivial one-dimensional vector bundle $M$ is clear.
	Since both sides are additive in $E$ and $F$, the naturality implies that the map is an isomorphism also for all trivial vector bundles $E$ and $F$ of finite rank.
	The general case now follows by another application of additivity and naturality, using the fact that every vector bundle is a direct summand of a trivial one.
\end{proof}

\begin{rem}
	It seems plausible that some version of \Cref{SSS} is still correct even with infinitely many connected components, but care needs to be taken with the definition of vector bundle: in order for the equivalence to hold, one can clearly not expect the bundle dimension to be constant across the components of $M$, but it still needs to be bounded in order for the associated module to be finitely generated.
\end{rem}

\section{Extension and restriction of scalars}
\label{sec_extension}

Given an arbitrary ring homomorphism $\varphi : \ZLA \to \ZLB$, it is a standard fact that there is an adjunction between categories of modules like this:
\begin{equation}
	\label{adjunction_scalars}
	\begin{tikzcd}
		\Modcat{\ZLA} \ar[bend left]{r}{\varphi_*} \ar[phantom]{r}{\perp}	& \Modcat{\ZLB} \ar[bend left]{l}{\varphi^*}
	\end{tikzcd}
\end{equation}
The functor $\varphi^*$ is called \newterm{restriction of scalars}, and is simply given by considering every $\ZLB$-module as an $\ZLA$-module via $\varphi$.
The functor $\varphi_*$ is called \newterm{extension of scalars} and can be constructed as
\begin{equation}
	\label{extension}
	\varphi_*(\Mod) \coloneqq \ZLB \otimes_\ZLA \Mod,
\end{equation}
where $\ZLB$ is considered as an $\ZLA$-module via $\varphi$, and the result is a $\ZLB$-module with respect to $\ZLB$ acting by multiplication from the left.
The functoriality in $\Mod$ is obvious.
The adjunction then amounts to the hom-set bijection
\[
	\Modcat{\ZLA}(\ZLB \otimes_\ZLA \Mod, \Nod) \cong \Modcat{\ZLB}(\Mod, \varphi^*(\Nod)) \qquad \forall \Mod \in \Modcat{\ZLA}, \: \Nod \in \Modcat{\ZLB}
\]
which follows straightforwardly from the universal property of the tensor product.

Let us argue now that the functor $\varphi_*$ is especially well-behaved on the full subcategory of fgp modules $\fgpModcat{\ZLA} \subseteq \Modcat{\ZLA}$.

\begin{prop}
	\label{extension_fgp}
	The extension of scalars functor
	\[
		\varphi_* : \fgpModcat{\ZLA} \longrightarrow \fgpModcat{\ZLB}
	\]
	is strong symmetric monoidal.
\end{prop}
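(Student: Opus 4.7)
The plan is to proceed in three stages: first checking that $\varphi_*$ indeed lands in $\fgpModcat{\ZLB}$, then constructing the lax symmetric monoidal structure, and finally verifying that the coherence maps are invertible.

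For the first stage, I will use the retract characterization of fgp modules, namely \Cref{fgp_dual}\ref{fgp_retract}. If $\Mod$ is fgp over $\ZLA$, pick $u \colon \ZLA^n \to \Mod$ and $\xi \colon \Mod \to \ZLA^n$ with $\xi u = \id_{\ZLA^n}$. Applying $\varphi_*$ to both and using the canonical isomorphism $\varphi_*(\ZLA^n) = \ZLB \otimes_\ZLA \ZLA^n \cong \ZLB^n$, we exhibit $\varphi_*(\Mod)$ as a retract of $\ZLB^n$, hence fgp over $\ZLB$. Functoriality of $\varphi_*$ guarantees that $\varphi_*(\xi) \varphi_*(u) = \id_{\varphi_*(\ZLA^n)}$.

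For the second stage, the unit isomorphism $\eta \colon \ZLB \xrightarrow{\cong} \varphi_*(\ZLA) = \ZLB \otimes_\ZLA \ZLA$ sends $b \mapsto b \otimes 1_\ZLA$, and the tensor coherence
\[
\mu_{\Mod,\Nod} \: : \: \varphi_*(\Mod) \otimes_\ZLB \varphi_*(\Nod) \longrightarrow \varphi_*(\Mod \otimes_\ZLA \Nod)
\]
is defined by $(b \otimes x) \otimes_\ZLB (b' \otimes y) \mapsto b b' \otimes (x \otimes_\ZLA y)$, which is well-defined and $\ZLB$-linear by the universal properties of the two tensor products involved. The associativity, unitality and symmetry hexagons/triangles for $(\mu, \eta)$ follow by routine diagram chases, since both sides act by the same reassociation and reordering of tensor factors on representative elements of the form $b \otimes x$.

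The main content is showing that $\mu_{\Mod,\Nod}$ is an isomorphism when $\Mod$ and $\Nod$ are fgp; I expect this to be the only nontrivial step. The strategy is the same retract argument as in Stage 1: $\mu$ is manifestly an isomorphism when $\Mod = \ZLA^n$ and $\Nod = \ZLA^m$, since both sides reduce to $\ZLB^{nm}$ and the map is the identity under this identification. By naturality of $\mu$ in both arguments, and the fact that both $\varphi_*$ and the two tensor products preserve direct summands, the isomorphism property transfers to any pair of retracts of free modules of finite rank, which by \Cref{fgp_dual}\ref{fgp_retract} is exactly the fgp case. Combining this with the Stage 2 coherence verifications then yields that $\varphi_*$ is strong symmetric monoidal, as claimed.
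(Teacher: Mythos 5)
Your proposal is correct and follows essentially the same route as the paper: the paper also shows preservation of fgp-ness via the direct summand (retract) characterization, defines the coherence map $\mu$ by the same element formula, and proves invertibility by the standard ``true for finite free modules, then transfer to direct summands by additivity and naturality'' argument (which the paper delegates to the proof of its smooth Serre--Swan theorem). No gaps.
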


\begin{proof}
	First of all, it is straightforward to see from~\eqref{extension} that if $\Mod$ is a direct summand of a finitely generated free module, then so is $\varphi_*(\Mod)$.
	This shows that $\varphi_*$ indeed maps $\fgpModcat{\ZLA}$ to $\fgpModcat{\ZLB}$.
	Now for given fgp $\ZLA$-modules $\Mod$ and $\Nod$, we can argue as in the proof of \Cref{SSS} in order to construct a natural isomorphism
	\[
		\varphi_*(\Mod) \otimes_\ZLB \varphi_*(\Nod) \cong \varphi_*(\Mod \otimes_{\ZLA} \Nod)
	\]
	satisfying the relevant coherences, where the monoidal unit isomorphism $\varphi_*(\ZLA) \cong \ZLB$ is trivial.
\end{proof}

\begin{rem}
	\label{extension_dual}
	It follows from~\Cref{extension_fgp} that the extension of scalars commutes with dualization: there is an isomorphism
	\[
		\varphi_*(\Mod^*) \cong \varphi_*(\Mod)^*
	\]
	natural in fgp $\ZLA$-modules $\Mod$.
	Given dual bases of $\Mod$ and $\Mod^*$ as in~\eqref{duality}, one can also see directly that the elements
	\begin{equation}
		\label{extension_dual_eq}
		1_\ZLB \otimes u_1, \ldots, 1_\ZLB \otimes u_n \in \varphi_*(\Mod), \qquad
		1_\ZLB \otimes \xi_1, \ldots, 1_\ZLB \otimes \xi_n \in \varphi_*(\Mod^*)
	\end{equation}
	form dual bases of $\varphi_*(\Mod)$ and $\varphi_*(\Mod^*)$, since the relevant relation~\eqref{duality} is straightforward to verify.
\end{rem}

The geometric significance of the extension of scalars is that it corresponds to the pullback of vector bundles along a smooth map, in the following sense.

\begin{prop}
	\label{pullback}
	Let $M$ and $N$ be smooth manifolds with finitely many connected components and $f : M \to N$ a smooth map.
	Then the diagram
	\[
		\begin{tikzcd}
			\Vect{N} \ar{r}{f^*} \ar[swap]{d}{\cong} & \Vect{M} \ar{d}{\cong}	\\	
			\fgpModcat{C^\infty(N)} \ar{r}{C^\infty(f)_*}	& \fgpModcat{C^\infty(M)}
		\end{tikzcd}
	\]
	commutes up to natural isomorphism, where the vertical arrows are the equivalences of \Cref{SSS}.
\end{prop}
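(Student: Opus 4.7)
The plan is to construct an explicit natural isomorphism
\[
	\eta_E \: : \: C^\infty(f)_* \Gamma^\infty(E) \longrightarrow \Gamma^\infty(f^* E)
\]
for each smooth real vector bundle $\pi_E : E \to N$, and then verify that it is an isomorphism by reducing to the case of trivial bundles, parallel to the final step in the proof of \Cref{SSS}. Explicitly, I would define $\eta_E$ on elementary tensors by
\[
	\eta_E(g \otimes s)(p) \coloneqq g(p) \cdot s(f(p)),
\]
using the canonical identification of the fibre $(f^* E)_p$ with $E_{f(p)}$. The first step is to check that this formula is well-defined, i.e.~$C^\infty(N)$-balanced via $C^\infty(f)$, which is immediate since for $h \in C^\infty(N)$ we have $(g \cdot C^\infty(f)(h))(p) \, s(f(p)) = g(p) \, h(f(p)) \, s(f(p)) = g(p) \, (h \cdot s)(f(p))$. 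A similarly routine check shows that $\eta_E$ is $C^\infty(M)$-linear and natural in $E$.

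Next I would verify that $\eta_E$ is an isomorphism for the trivial line bundle $E = N \times \R$, in which case $\Gamma^\infty(E) \cong C^\infty(N)$ and $\Gamma^\infty(f^* E) \cong C^\infty(M)$, so that $\eta_E$ reduces to the canonical isomorphism $C^\infty(M) \otimes_{C^\infty(N)} C^\infty(N) \cong C^\infty(M)$. Since both the functor $C^\infty(f)_* \circ \Gamma^\infty$ and the functor $\Gamma^\infty \circ f^*$ commute with finite direct sums, and $\eta$ is natural, it follows that $\eta_E$ is an isomorphism whenever $E$ is a trivial bundle of finite rank over $N$.

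For the general case I would invoke the fact that over a manifold $N$ with finitely many connected components, every smooth vector bundle $E$ is a direct summand of some trivial bundle $N \times \R^n$: there exists $E'$ with $E \oplus E' \cong N \times \R^n$. Applying $\eta$ to both summands and using that $\eta_{E \oplus E'}$ is an isomorphism (because $E \oplus E'$ is trivial), while $\eta_{E \oplus E'}$ decomposes as the direct sum $\eta_E \oplus \eta_{E'}$ by naturality and additivity, forces both $\eta_E$ and $\eta_{E'}$ to be isomorphisms.

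The only place where some care is really needed is in the reduction step for general $E$: one must ensure that the functors $C^\infty(f)_*$ and $f^*$ genuinely preserve the direct sum decomposition at the level of sections, which for $f^*$ is standard and for $C^\infty(f)_* = C^\infty(M) \otimes_{C^\infty(N)} (-)$ follows from the fact that tensor products commute with direct sums. Once this is in hand, the commuting square in the statement follows because the vertical equivalences of \Cref{SSS} are witnessed by $\Gamma^\infty$, and we have exhibited the required natural isomorphism between $\Gamma^\infty \circ f^*$ and $C^\infty(f)_* \circ \Gamma^\infty$.
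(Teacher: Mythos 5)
Your proposal is correct and follows essentially the same route as the paper, which disposes of the statement in one line by invoking additivity of the functors involved together with the fact that the square commutes up to isomorphism on the trivial rank-one bundle (the same reduction via direct summands of trivial bundles that you spell out). Your version simply makes explicit the natural transformation $\eta_E$ and the balancing check that the paper leaves implicit.
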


\begin{proof}
	This follows again by additivity of the functors involved and the fact that the diagram commutes up to isomorphism on the trivial vector bundle of rank one on $N$.
\end{proof}

\begin{rem}
	\label{pullback_mfd}
	In particular, if $\Mod$ is any fgp $C^\infty(N)$-module corresponding to sections of a vector bundle $\pi_E : E \to N$, then the pullback bundle $f^* E \to M$ has module of sections given by
	\[
		C^\infty(f)_*(\Mod) = C^\infty(M) \otimes_{C^\infty(N)} \Mod.
	\]
	For any section $x \in \Mod$, we furthermore have an associated element
	\[
		1_{C^\infty(M)} \otimes x \: \in \: C^\infty(f)_*(\Mod),
	\]
	which corresponds to the section of $f^* E$ given by pulling back the original section along $f$ in the obvious way.
	This can once again by the seen by the standard additivity and naturality argument, and we use this fact several times in the main text.
\end{rem}

\printbibliography

\end{document}